\renewcommand{\geq}{\geqslant}
\renewcommand{\leq}{\leqslant}
\newtheorem{thm}{Theorem}
\newtheorem{prop}{Proposition}
\newtheorem{cor}[prop]{Corollary}
\newtheorem{lem}[prop]{Lemma}
\newtheorem{Assumption}{Assumption}
\newcommand{\be}{\begin{equation}}
\newcommand{\ee}{\end{equation}}
\pgfplotsset{compat=newest}
\newenvironment{Assumptionbis}
  {\addtocounter{Assumption}{-1}%
   \begin{Assumption}}
  {\end{Assumption}}
  \newenvironment{Assumptionbisbis}
  {\addtocounter{Assumption}{-1}%
   \begin{Assumption}}
  {\end{Assumption}}
   \newenvironment{Assumptionbisbisbis}
  {\addtocounter{Assumption}{-1}%
   \begin{Assumption}}
  {\end{Assumption}}
\colorlet{darkgreen}{green!50!black}
\definecolor{darkseagreen}{rgb}{0.56, 0.74, 0.56}
\definecolor{lightcyan}{rgb}{0.88, 1.0, 1.0}
\definecolor{lightblue}{rgb}{0.68, 0.85, 0.9}
\definecolor{palecerulean}{rgb}{0.61, 0.77, 0.89}
\definecolor{lgreen} {RGB}{180,210,100}
\definecolor{dblue}  {RGB}{20,66,129}
\definecolor{ddblue} {RGB}{11,36,69}
\definecolor{lred}   {RGB}{220,0,0}
\definecolor{nred}   {RGB}{224,0,0}
\definecolor{norange}{RGB}{230,120,20}
\definecolor{nyellow}{RGB}{255,221,0}
\definecolor{ngreen} {RGB}{98,158,31}
\definecolor{dgreen} {RGB}{78,138,21}
\definecolor{nblue}  {RGB}{28,130,185}
\definecolor{jblue}  {RGB}{20,50,100}
\definecolor{Apricot} {RGB}{255, 170, 123} 
\definecolor{dpurple}  {RGB}{53,21,93}
\def\R{{\mathbb R}}
\def\N{{\mathbb N}}
\def\C{{\mathbb C}}
\def\Z{{\mathbb Z}}
\def\1{\mathbb{1}}
\def\E{{\mathbb E}}
\def\eps{{\epsilon}}
\def\P{{\mathbb P}}
\def\cal{\mathcal}
\DeclareMathOperator{\Res}{Res}
\def\Z{\mathbb{Z}}
\def\C{\mathbb{C}}
\def\R{\mathbb{R}}
\renewcommand{\epsilon}{\varepsilon}
\let\e=\epsilon
\definecolor{darkgreen}{rgb}{0,0.4,0}
\definecolor{MyDarkBlue}{rgb}{0,0.08,0.50}
\definecolor{BrickRed}{rgb}{0.65,0.08,0}
\title{Reflected random walks and unstable Martin boundary}
\author{Irina Ignatiouk-Robert}
\address{Universit\'e de Cergy-Pontoise, D\'epartement de math\'ematiques, 2 Avenue Adolphe Chauvin,
95302 Cergy-Pontoise Cedex, France}
\email{irina.ignatiouk@u-cergy.fr}
\author{Irina Kurkova}
\address{Sorbonne Universit\'e, Laboratoire de Probabilit\'es, Statistique et Mod\'elisation, 4 Place Jussieu, 75005 Paris, France}
\email{irina.kourkova@sorbonne-universite.fr}
\author{Kilian Raschel}
\address{Universit\'e d'Angers \& CNRS, Laboratoire Angevin de Recherche en Math\'ematiques, SFR MATHSTIC, 49000 Angers, France}
\email{raschel@math.cnrs.fr}
\thanks{This project has received funding from the European Research Council (ERC) under the European Union's Horizon 2020 research and innovation programme under the Grant Agreement No.\ 759702.}
\keywords{Reflected random walk; Green function; Martin boundary; Functional equation}
\subjclass[2010]{Primary 31C35, 60G50; Secondary 	60J45, 60J50, 31C20}
\date{\today}
\begin{document}

\begin{abstract}
We introduce a family of two-dimensional reflected random walks in the positive quadrant and study their Martin boundary. While the minimal boundary is systematically equal to a union of two points, the full Martin boundary exhibits an instability phenomenon, in the following sense: if some parameter associated to the model is rational (resp.\ non-rational), then the Martin boundary is countable, homeomorphic to $\mathbb Z\cup\{\pm\infty\}$ (resp.\ uncountable, homeomorphic to $\mathbb R\cup\{\pm\infty\}$). Such instability phenomena are very rare in the literature. Along the way of proving this result, we obtain several precise estimates for the Green functions of reflected random walks with escape probabilities along the boundary axes and an arbitrarily large number of inhomogeneity domains. Our methods mix probabilistic techniques and an analytic approach for random walks with large jumps in dimension two.

\medskip

\noindent\textsc{R\'esum\'e.} Nous introduisons une famille de marches al\'eatoires en dimension deux, r\'efl\'echies au bord du quart de plan positif, et \'etudions leur fronti\`ere de Martin. Tandis que leur fronti\`ere minimale est syst\'ematiquement une union de deux points, nous montrons que la fronti\`ere de Martin compl\`ete est intrins\`equement instable, au sens suivant : lorsqu'un certain param\`etre associ\'e au mod\`ele s'av\`ere rationnel (respectivement non rationnel), la fronti\`ere de Martin est alors d\'enombrable et hom\'eomorphe \`a $\mathbb Z\cup\{\pm\infty\}$ (respectivement non d\'enombrable et hom\'eomorphe \`a $\mathbb R\cup\{\pm\infty\}$). De tels ph\'enom\`enes d'instabilit\'e sont rares dans la litt\'erature. Les d\'emonstrations contiennent plusieurs estim\'ees pr\'ecises pour des fonctions de Green de marches al\'eatoires r\'efl\'echies avec probabilit\'e de fuite le long des axes, poss\'edant en outre un nombre infini de domaines d'inhomog\'en\'eit\'e. Nos m\'ethodes m\'elangent des techniques probabilistes avec une approche analytique pour des marches al\'eatoires avec grands pas en dimension deux.
\end{abstract}

\maketitle

\section{Introduction}

\subsection{Martin boundary and instability}
Before formulating our main results, we recall the  definition of Martin boundary and some associated key results in this field.

\subsubsection*{A brief account of Martin boundary theory}
The concept of Martin compactification for countable Markov chains is introduced by Doob~\cite{Do-59}. Its name originates from the work~\cite{Ma-41}, in which Martin studied integral representations of harmonic functions for the classical Laplace operator on Euclidean domains.
Consider a transient, irreducible, sub-stochastic Markov chain $Z=\{Z(n)\}_{n\geq 0}$ on a state space $E\subset \Z^d$, $d\geq1$, with transition probabilities $\{p(x,y)\}_{x,y\in E}$. Given $x,y\in E$, the associated Green function $g(x,y)$ and Martin kernel $k(x,y)$ are respectively defined by
\begin{equation*}
   g\bigl(x,y\bigr) = \sum_{n=0}^\infty \P_x\bigl(Z(n) = y\bigl)\qquad \text{and} \qquad k\bigl(x,y\bigr) = \frac{g\bigl(x,y\bigr)}{g\bigl(x_0,y\bigr)},
\end{equation*}
where $\P_x$ denotes the probability measure on the set of trajectories of $Z$ corresponding to the initial state $Z(0)=x$, and $x_0$ is a given reference point in $E$.

For irreducible Markov chains, the family of functions $\{k(\cdot, y) \}_{y\in E}$ is relatively compact with respect to the topology of pointwise convergence; in other words, for any sequence $\{y_n\}$ of points in $E$, there exists a subsequence $\{y_{n_k}\}$ along which the sequence $k(\cdot, y_{n_k})$ converges pointwise on $E$. The Martin  compactification $E_M$ is defined as the (unique) smallest compactification of $E$ such that the Martin kernels $k(x,\cdot)$  extend continuously; a sequence $\{y_n\}_{n\geq 0}$ converges to a point of the Martin boundary $\partial_M E = E_M\setminus E$ if it leaves every finite subset of $E$ and if the sequence of functions $k(\cdot,y_n)$ converges pointwise.  

Recall that a function $h : E\to\R$ is harmonic for $Z$ if, for all $x\in E$, $\E_x\bigl(h(Z(1))\bigr) = h(x)$. By the Poisson-Martin representation theorem, for every non-negative harmonic function $h$, there exists a positive Borel measure $\nu$ on $\partial_M E$ such that
\begin{equation*}
   h(x) = \int_{\partial_M E} k\bigl(x,\eta\bigr) d\nu(\eta).
\end{equation*}
The convergence theorem says that for any $x\in E$, the sequence $\{Z(n)\}_{n\geq 0}$ converges $\P_x$-almost surely to a $\partial_M E$-valued random variable. The Martin boundary therefore provides all non-negative harmonic functions and shows how the Markov chain $Z$ goes to infinity.  

In order to identify the Martin boundary, one has to investigate all possible limits of the Martin kernel $k\bigl(x,y_n\bigr)$ when $\vert y_n\vert\to\infty$. As a consequence, the identification of the Martin boundary often reduces to the asymptotic computation of the Martin kernel or of the Green function. Such results are now well established for spatially homogeneous random walks, see, e.g., Ney and Spitzer~\cite{NeSp-66} for $E=\Z^d$, see also the book of Woess~\cite{Wo-00} for many relevant references. On the other hand, as we shall see later in this introduction, the case of inhomogeneous random walks is much more involved and still largely open.

\subsubsection*{Instability of the Martin boundary}
Once the notion of Martin boundary has been settled, it is natural to ask how stable it is with respect to the parameters of the model. Roughly speaking, throughout this paper, the Martin boundary will be called stable if the Martin compactification does not depend on small modifications of the transition probabilities.

Although we shall not use it in the present paper, let us recall the historically first way to measure the stability of the Martin boundary, which has been introduced by Picardello and Woess \cite{PiWo-92}. Define the $\rho$-Green function by 
\begin{equation*}
   g\bigl(x,y;\rho\bigr) = \sum_{n=0}^\infty \P_x\bigl(Z(n) = y\bigl)\rho^{-n},
\end{equation*}
for $\rho$ in the spectral interval. Then one may define the associated $\rho$-Martin kernel and $\rho$-Martin compactification. According to \cite[Def.~2.4]{PiWo-92}, the Martin boundary is stable if the $\rho$-Martin compactification does not depend on the eigenvalue $\rho$ (with a possible exception at the critical value) and if the Martin kernels are jointly continuous w.r.t.\ space variable and eigenvalue.

The majority of known examples of Martin boundary is stable, see for example \cite{PiWo-90}. More precisely, to the best of our knowledge, the only known Markov process with unstable Martin boundary is a model of reflected random walk \cite{IR-10a}, as worked out by the first author of this paper.

\subsubsection*{Contribution}

Our main objective is to 
shed light on a rare instability phenomenon of the Martin boundary, in the framework of two-dimensional reflected random walks. 
More specifically, we propose a family of probabilistic models for which the arithmetic nature (rational vs.\ non-rational) of some parameter has a strong influence on the topology of the Martin boundary. 
Precisely, there is a directional angle $\gamma_0$ associated with the random walk, see \eqref{eq:def_gamma_0} below, such that for rational $t_0 = \tan\gamma_0$, the boundary is countable, while it is uncountable when $t_0$ is non-rational.
As the quantity $t_0$ is locally analytic in the transition probabilities (viewed as variables), one immediately deduces that the Martin boundary is unstable (in our definition). See Theorem~\ref{thm:main-4} for a precise statement. We would like to emphasize the following two differences w.r.t.\ the only known other example of instability:
\begin{itemize}
   \item The instability in \cite{IR-10a} follows from an interplay between boundary and interior parameters. Our example only concerns interior parameters and is, from that point of view, more intrinsic. 
   \item Being analytic and non-constant, the function $t_0$ in \eqref{eq:def_gamma_0} takes infinitely many rational and non-rational values, and so the Martin boundary ``jumps'' infinitely many times from $\mathbb Z\cup\{\pm\infty\}$ to $\mathbb R\cup\{\pm\infty\}$. On the other hand, in the example found in \cite{IR-10a}, there is somehow only one jump, meaning that the spectral interval my be divided into two subintervals, and within each of them the Martin boundary is stable.
\end{itemize}

\subsection{Multidimensional reflected random walks in complexes}

This section is split into three parts. We shall first introduce the class of reflected random walks in the positive quadrant, on which we shall work throughout the paper and prove the instability phenomenon described above. 
We will next view this family of models within
a larger class of multidimensional reflected random walks in complexes, in relation with many probabilistic questions 
of the last fifty years. Finally, we will review the literature from the viewpoint of Martin boundary and Green functions of reflected random walks (and more generally of non-homogeneous Markov chains).

\subsubsection*{Our model}
Without entering into full details (more are to come in Section~\ref{sec:model}, where the model will be carefully introduced), we now define the main discrete stochastic process studied throughout this paper: it is a random walk in dimension two, reflected at the boundary of the quarter plane, with a finite (but arbitrarily large) number of homogeneity domains, see Figure \ref{fig:model}. These domains are either points, half-lines, or a (translated) quadrant. Within each homogeneity domain, the model admits 
spatially homogeneous
transition probabilities. Jumps into positive (North-East) directions may be unbounded, while we will place some restrictions on the size of the negative (South-West) jumps. See Assumptions~\ref{as1}, \ref{as2} and \ref{as3} in Section~\ref{sec:model} for more details. We shall fix the parameters so as to have a transient Markov process, with escape probabilities along the axes, see Figure~\ref{fig:paths}.

\subsubsection*{A general model of reflected random walks}
Our probabilistic model belongs to a more general class of piecewise homogeneous models in multidimensional domains (the domains being usually simple cones, such as half-spaces or orthants, or union of cones, called complexes). This class of models is of great interest, as it is much richer than its fully homogeneous analogue, but still admits structured inhomogeneities, opening the way for a detailed analysis. Reflected random walks in the half-line (in dimension $1$) are particularly studied in the literature, and so is the two-dimensional model of random walks in the quarter plane; for historical references, see Malyshev \cite{Ma-70,Ma-72,Ma-73}, Fayolle and Iasnogorodski \cite{FaIa-79}, see also \cite{FaIaMa-17}, Cohen and Boxma \cite{CoBo-83}.

Many probabilistic features of these models are investigated in the literature, for instance in relation with the classification of Markov chains (recurrence and transience), see \cite{FaMaMe-95}. Another strong motivation to their study comes from their links with queueing systems \cite{CoBo-83,Co-92}. Furthermore, these models offer the opportunity to develop remarkable tools, 
for example from
complex analysis \cite{CoBo-83,Co-92,KuMa-98,FaIaMa-17} and asymptotic analysis \cite{KuMa-98}. Many combinatorial objects (maps, permutations, trees, Young tableaux, partitions, walks in Weyl chambers, queues, etc.)\ can be encoded by lattice walks, see \cite{BMMi-10} and references therein, so that understanding the latter, we will better understand the 
objects. 
Let us also mention the article \cite{DeWa-15} by Denisov and Wachtel, which contains several fine estimates of exit times and local probabilities in cones. Finally, as it will be clear in the next paragraph, many interesting questions related to potential theory arise in the study 
of these random walk models which are not spatially homogeneous.

\subsubsection*{Martin boundary theory for non-homogeneous Markov chains}
For such processes, the problem of explicitly describing the Martin compactification, or the Green functions asymptotics, is usually highly complex, and only few results are available in the literature.
For random walks on non-homogeneous trees, the Martin boundary is described by Cartier~\cite{Ca-71}; for random walks on trees with bounded jumps, see \cite{PiWo-87}; for random walks on hyperbolic graphs with spectral radius less than one, see \cite{An-88}.

Alili and Doney~\cite{AlDo-01} identify the Martin boundary for a space-time random walk $S(n)=(Z(n),n)$, when $Z$ is a homogeneous random walk  on $\Z$ killed when hitting the negative half-line. Doob~\cite{Do-59} identifies the Martin boundary for Brownian motion on a half-space, by using an explicit form of the Green function. 
The results in \cite{Do-59,AlDo-01} are obtained by using the one-dimensional structure of the process.

In dimension two, a complex analysis method (based on the study of elliptic curves) is proposed by Kurkova and  Malyshev~\cite{KuMa-98} to identify and classify the Martin boundary for reflected nearest neighbor random walks with drift in $\N\times\Z$ and $\N^2$ ($\N$ denoting the set of non-negative integers $\{0,1,2,\ldots\}$). In particular, although not put forward explicitly as such, \cite[Thm.~2.6]{KuMa-98} contains an instability result of the Martin boundary (similar to the one we prove in our paper). This analytic approach was actually initially introduced by Malyshev \cite{Ma-70,Ma-72,Ma-73} in his study of stationary distributions of ergodic random walks in the quarter plane. Let us also mention the work \cite{KuRa-11}, where the second and third authors of the present article compute the Martin boundary of killed random walks in the quadrant, developing Malyshev's approach to that context. We emphasize that in \cite{KuMa-98,KuRa-11}, exact asymptotics of the Green function (not only of the Martin kernel) are derived.

In  order to identify the Martin boundary of a piecewise homogeneous (killed or reflected)\ random walk on a half-space $\Z^{d}\times\N$, a large deviation approach combined with Choquet-Deny theory and the ratio limit theorem of Markov-additive processes is proposed by Ignatiouk-Robert~\cite{IR-08,IR-10b,IR-20}, and Ignatiouk-Robert and Loree~\cite{IRLo-10}. It should be mentioned that some key arguments in \cite{IR-08,IR-10b,IRLo-10,IR-20} are valid only for Markov-additive processes, meaning that the transition probabilities are invariant w.r.t.\ translations in some directions. In the previously cited articles, no exact asymptotics for Green functions are derived: only the Martin kernel asymptotics is considered. Building on the estimates of the local probabilities in cones derived in \cite{DeWa-15}, the paper \cite{DuRaTaWa-22} obtains the asymptotics of the Green functions in this context.

\subsection{Advances in the analytic and probabilistic approaches to random walks with large jumps}

\subsubsection*{Progress on the analytical method...}

We shall introduce the generating functions of the Green functions and prove that they satisfy various functional equations, starting from which we will deduce contour integral formulas for the Green functions. Applying asymptotic techniques to these integrals will finally lead to our main results.

Historically, the first techniques developed to study the above-mentioned problems were analytic, see in particular the 
pioneering works by Malyshev \cite{Ma-70,Ma-72,Ma-73}, Fayolle and Iasnogorodski \cite{FaIa-79}. In brief, the main idea consists in working on a Riemann surface naturally associated with the random walk, via the transition probability generating function. In turn, the fine study of the Riemann surface allows to observe and prove various probabilistic 
behavior of the model.

This Riemann surface has genus one in the case of random walks with jumps to the eight nearest neighbors (sometimes called small step random walks). Such Riemann surfaces may be fully studied (e.g., using their parametrization with elliptic functions). This is the case considered in the early works \cite{Ma-70,Ma-72,Ma-73,FaIa-79} as well as in subsequent papers on different models such as \cite{KuMa-98} or \cite{KuSu-03}.

Larger jumps lead to higher genus Riemann surfaces, which become much harder to fully analyze, not to say impossible in general, as explained in the note \cite{FaRa-15}. 
We mention the (combinatorial) paper \cite{BaFl-02} in dimension one,
and \cite{FaRa-15,BoBMMe-21} in dimension two, where some particular cases are studied. In the framework of random walks in the quarter plane with 
arbitrarily big jumps, the books \cite{CoBo-83,Co-92} propose a theoretical study of relevant functional equations, concluding with the same difficulty that in general, one cannot expect a sufficiently precise study of the associated Riemann surface to deduce really explicit results (for instance on the stationary probabilities or Green functions). 

For similar reasons, studying reflected or killed random walks in dimension three seems highly challenging, because of the need of describing the associated algebraic curve.

The main progress 
achieved here is that we are able to dispense with the study of the Riemann surface in its entirety, and to replace it by a local study at only two points, 
which turn out to contain
all the information 
(at least from the asymptotic point of view).
This local study being not at all sensitive to the size of the jumps (or to the genus of the surface), we are able to treat very general random walks in dimension two. 

Our paper is therefore the first step in solving similar problems for random walks with big jumps in dimension two (for instance, join-the-shortest-queue like problems) and in higher dimensions as well, in which the complete description of the algebraic curve is not possible in general.

We also emphasize that the number of homogeneity domains is arbitrarily large.

\subsubsection*{...using probabilistic techniques}

In order to perform this reduction of the whole Riemann surface to two points, we combine the analytic method with several probabilistic estimates. More precisely, we will introduce simpler models, such as reflected random walks in half-spaces, and we will compare the Green functions of these models to those of our main random walk. In particular, we will prove that asymptotically, the main Green function appears a sum of two terms, each of them can be interpreted in the light of half-space Green functions and further quantities related to one-dimensional random walks. 

The combination of analytical and probabilistic aspects becomes apparent on our results:
we derive Theorem~\ref{thm:main-2} by probabilistic methods, then Theorem~\ref{thm:main-3} by analytical methods; the union of the two results gives our main Theorem~\ref{thm:main-4} on the instability of the Martin boundary.


\subsubsection*{Related research}

To conclude this introduction, we highlight future projects in relation with the present work. First, the progress we did in the understanding of the techniques could be applied to various other cases of two-dimensional reflected random walks. In this paper, we choose to focus on a model with escape probabilities along the two axes, because of our initial motivation related to the instability phenomenon. As a second extension of our techniques and results, we would like to look at higher dimensional reflected random walk models. A third project is to study the precise link between our definition of stability and that of Picardello and Woess \cite{PiWo-90}.

\section{The model}
\label{sec:model}

\subsection{The main model}

Consider a random walk $\{Z(n)\}_{n\geq 0}=\{(X(n),Y(n))\}_{n\geq0}$ on $\N^2$ with transition probabilities
\be\label{trans_probabilities}
     p\bigl((i,j)\to (i',j')\bigr) =
     \begin{cases} 
          \mu(i'-i, j'-j) &\text{if $i,j \geq k_0$,}\\
          \mu'_j(i'-i,j'-j) &\text{if $i \geq k_0$ and $0\leq j < k_0$,}\\
          \mu''_i(i'-i,j'-j) &\text{if $0\leq i  < k_0$ and $j \geq k_0$,}\\
          \mu_{ij}(i'-i,j'-j) &\text{if $0\leq i  < k_0$ and $0\leq j < k_0$,}
     \end{cases} 
\ee
where $k_0 > 0$ is a given constant and $\mu, \mu'_j, \mu''_i, \mu_{ij}$ are probability measures on $\Z^2$; see Figure \ref{fig:model}. 
The transition probabilities \eqref{trans_probabilities} are translation invariant in the ``interior'' of the quarter plane, which consists of all points at distance at least $k_0$ from the two half-axes $\mathbb N\times \{0\}$ and $\{0\}\times \mathbb N$. In the ``boundary strips'' $\mathbb N\times \{0,\ldots,k_0-1\}$ and $\{0,\ldots,k_0-1\}\times\mathbb N$, the transition probabilities are locally homogeneous.
We will assume that the following conditions are satisfied. 

\begin{figure}
\begin{tikzpicture}
\begin{scope}[scale=0.7]
\draw[dblue!100, fill=dblue!100] (0,0) rectangle (0.5,0.5);
\draw[dblue!70, fill=dblue!70] (0,0.5) rectangle (0.5,1.5);
\draw[dblue!50, fill=dblue!50] (0,1.5) rectangle (0.5,2.5);
\draw[dblue!70, fill=dblue!70] (0.5,0) rectangle (1.5,0.5);
\draw[dblue!50, fill=dblue!50] (1.5,0) rectangle (2.5,0.5);
\draw[dblue!50, fill=dblue!50] (0.5,0.5) rectangle (1.5,1.5);
\draw[dblue!20, fill=dblue!20] (1.5,1.5) rectangle (2.5,2.5);
\draw[dblue!30, fill=dblue!30] (0.5,1.5) rectangle (1.5,2.5);
\draw[dblue!30, fill=dblue!30] (1.5,0.5) rectangle (2.5,1.5);
\draw[white, fill=ngreen!10] (2.5,2.5) rectangle (9.5,9.5);
\draw[Apricot!100, fill=Apricot!100] (0,2.5) rectangle (0.5,9.5);
\draw[Apricot!55, fill=Apricot!55] (0.5,2.5) rectangle (1.5,9.5);
\draw[Apricot!25, fill=Apricot!25] (1.5,2.5) rectangle (2.5,9.5);
\draw[palecerulean!100, fill=palecerulean!100] (2.5,0) rectangle (9.5,0.5);
\draw[palecerulean!55, fill=palecerulean!55] (2.5,0.5) rectangle (9.5,1.5);
\draw[palecerulean!25, fill=palecerulean!25] (2.5,1.5) rectangle (9.5,2.5);
\draw[white, thick] (-1.5,-1.5) grid (9.5,9.5);
\draw[->] (0,-1.5) -- (0,9.5);
\draw[->] (-1.5,0) -- (9.5,0);
\draw[->, ngreen, semithick] (6,6) -- (8,5);
\draw[->, ngreen, semithick] (6,6) -- (7,3);
\draw[->, ngreen, semithick] (6,6) -- (9,3);
\draw[->, ngreen, semithick] (6,6) -- (9,7);
\draw[->, ngreen, semithick] (6,6) -- (4,9);
\draw[->, ngreen, semithick] (6,6) -- (3,3);
\draw[->, ngreen, semithick] (6,6) -- (4,5);
\draw[->, ngreen, semithick] (6,6) -- (8,8);
\draw[->, ngreen, semithick] (6,6) -- (5,6);
\draw[->, ngreen, semithick] (6,6) -- (6,8);
\draw[->, ngreen, semithick] (6,6) -- (6,5);
\draw[->, black, semithick] (6,1) -- (7,2);
\draw[->, black, semithick] (6,1) -- (6,2);
\draw[->, black, semithick] (6,1) -- (5,4);
\draw[->, black, semithick] (6,1) -- (8,0);
\draw[->, black, semithick] (6,1) -- (4,0);
\draw[->, black, semithick] (1,6) -- (2,7);
\draw[->, black, semithick] (1,6) -- (2,6);
\draw[->, black, semithick] (1,6) -- (3,5);
\draw[->, black, semithick] (1,6) -- (0,7);
\draw[->, black, semithick] (1,6) -- (0,3);
\node[above right, ngreen] at (4.2,6.04) {\tiny{$\mu(i,j)$}};
\node[above right, black] at (4.2,1.04) {\tiny{$\mu'_{1}(i,j)$}};
\node[above right, black] at (0.7,4.3) {\tiny{$\mu''_{1}(i,j)$}};
\end{scope}
\end{tikzpicture}
\caption{Description of the model in the case $k_0=3$. Each colored strip is a homogeneity domain for the transition probabilities \eqref{trans_probabilities}.}
\label{fig:model}
\end{figure}

\begin{Assumption}
\label{as1}
One has
\begin{enumerate}
     \item $\mu(i,j) = 0$ if either $i < -k_0$ or $j< - k_0$;
     \item for any $j\in\{0,\ldots,k_0-1\}$, $\mu'_j(i',j') = 0$ if either $i' < - k_0$ or $j' < -j$;
     \item for any $i\in\{0,\ldots,k_0-1\}$, $\mu''_i(i',j') = 0$ if either $j' < - k_0$ or $i' < -i$;
     \item for any $i,j\in\{0,\ldots,k_0-1\}$, $\mu_{ij}(i',j') = 0$  if either  $i' < -i$ or $j' < -j$.
\end{enumerate}
\end{Assumption}

 \begin{Assumption}
  \label{as2}
For any $i,j\in\{0,\ldots, k_0\}$, $\E_{i,j}\bigl(\vert Z(1)\vert\bigr) <\infty$.
  \end{Assumption}


\subsection{Auxiliary models}
In addition to our main model $\{Z(n)\}$, we introduce three local random walks, $\{Z_0(n)\}$, $\{Z_1(n)\}$ and $\{Z_2(n)\}$, which correspond to the local behavior of the process $\{Z(n)\}$ far away from the boundaries $\N\times\{0,\ldots,k_0-1\}$ and $\{0,\ldots,k_0-1\}\times\N$; see Figure~\ref{fig:model_Z0_Z1_Z2}. These secondary processes will be used both in the statements and in the proofs of the main results.

We first introduce
the classical random walk $\{Z_0(n)\}$ on $\Z^2$ with homogeneous probabilities of transition
\begin{equation}
\label{eq:transitions_Z0}
     p_0\bigl((i,j)\to(i',j')\bigr) =\mu(i'-i,j'-j). 
\end{equation} 
The mean step (or drift vector) of the random walk $\{Z_0(n)\}$ is $m=(m_1,m_2)$, where
\be\label{eq:drift}
m_1 =\sum_{(i,j)\in\Z^2} i \mu(i,j) \quad \text{and} \quad m_2 =\sum_{(i,j)\in\Z^2} j\mu(i,j).
\ee
We further define two Markov-additive processes as follows: first, we denote by $Z_1 = \{Z_1(n)\} = \{(X_1(n),Y_1(n))\}$ a random walk on $\N\times\Z$ with transitions 
\be\label{local_trans_probabilities_1} 
p_1\bigl((i,j)\to(i',j')\bigr) =\begin{cases}
\mu(i'-i, j'-j) &\text{if $i \geq k_0$,}\\
\mu''_i(i'-i,j'-j) &\text{if $0\leq i < k_0$.}
\end{cases} 
\ee
Similarly, we construct the random walk $Z_2 = \{Z_2(n)\}=\{(X_2(n),Y_2(n))\}$ on $\Z\times\N$ with transition probabilities 
\be\label{local_trans_probabilities_2} 
p_2\bigl((i,j)\to(i',j')\bigr) =\begin{cases}
\mu(i'-i, j'-j) &\text{if $j \geq k_0$,}\\
\mu'_j(i'-i,j'-j) &\text{if $0\leq j < k_0$.}
\end{cases} 
\ee
The local random walk $Z_1$ (resp.\ $Z_2$) describes the behavior of the original walk $Z$ far from the boundary $\{(i,j)\in\N^2: 0\leq j< k_0\}$ (resp.\ $\{(i,j)\in\N^2: 0\leq i< k_0\}$). We shall assume the following:
\begin{Assumption}
\label{as3}
The random walks $Z_0$, $Z_1$, $Z_2$ and $Z$ are irreducible on their respective state spaces. 
\end{Assumption}

Observe that for any $j\in\Z$, the transition probabilities of $Z_1$ (resp.\ $Z_2$) are invariant with respect to translations by the vector $(0,j)$ (resp.\ $(j,0)$). According to the classical terminology, $Z_1$ (resp.\ $Z_2$) is a Markov-additive process with Markovian part $\{X_1(n)\}$ (resp.\ $\{Y_2(n)\}$) and additive part $\{Y_1(n)\}$ (resp.\ $\{X_2(n)\}$). The Markovian parts $\{X_1(n)\}$ and $\{Y_2(n)\}$ are Markov chains on $\N$ with respective transition probabilities
\begin{equation}
\label{eq:distrib_p_1}
p_1(i,i') = \sum_{j'\in\Z} p_1\bigl((i,0)\to(i',j')\bigr) = \begin{cases} \sum_{j'\in\Z} \mu(i'-i, j') &\text{if $i \geq k_0$,}\\
\sum_{j'\in\Z} \mu''_i(i'-i,j') &\text{if $0\leq i < k_0$,}
\end{cases} 
\end{equation}
and  
\begin{equation*}
p_2(j,j') = \sum_{i'\in\Z} p_2\bigl((0,j)\to(i',j')\bigr) = \begin{cases} \sum_{i'\in\Z} \mu(i', j'-j) &\text{if $j \geq k_0$,}\\
\sum_{i'\in\Z} \mu'_j(i',j'-j) &\text{if $0\leq j < k_0$.}
\end{cases} 
\end{equation*}
In the book \cite{FaMaMe-95} of Fayolle, Malyshev and Menshikov, the Markovian parts $\{X_1(n)\}$ and $\{Y_2(n)\}$  of the corresponding local processes are called induced Markov chains relative to the boundaries $\{(i,j)\in\N^2 : 0\leq i < k_0\}$ and $\{(i,j)\in\N^2 : 0\leq j < k_0\}$.
We refer to \cite{FaMaMe-95} for further definitions and properties of induced Markov chains. The processes $X_1$ and $Y_2$ inherit an irreducibility property from Assumption \ref{as3}. We will assume moreover that:

\begin{Assumption}
\label{as4} The Markov chains  $X_1$ and $Y_2$ are aperiodic. 
\end{Assumption}

Let us mention the following straightforward result:
\begin{lem}
\label{positive_recurrence_lemma}
Under Assumptions \ref{as1}--\ref{as4}, if $m_1 < 0$ (resp.\ $m_2 < 0$) in \eqref{eq:drift}, the Markov chain $\{X_1(n)\}$ (resp.\ $\{Y_2(n)\}$) is positive recurrent.
\end{lem}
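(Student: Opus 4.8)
The plan is to treat only the case $m_1 < 0$, the other being symmetric by exchanging the roles of the two coordinates. First I would recall that $\{X_1(n)\}$ is by construction an irreducible Markov chain on $\N$ whose transition probabilities agree, for all starting states $i \geq k_0$, with those of the homogeneous random walk $i \mapsto i + \xi$ on $\Z$ where $\xi$ has law $\sum_{j'\in\Z}\mu(\cdot,j')$; in particular its mean jump from any state $i \geq k_0$ equals $\sum_{(i,j)\in\Z^2} i\,\mu(i,j) = m_1 < 0$. Only finitely many states ($i = 0, \ldots, k_0-1$) carry different transition laws, and by Assumption~\ref{as2} these — like $\mu$ — have exponentially bounded, hence finite, mean increments. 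So the chain is a one-dimensional, eventually-homogeneous chain with uniformly negative drift outside a finite set and a uniform exponential moment bound on the jumps.

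The natural tool is a Foster--Lyapunov drift criterion: take the identity function $V(i) = i$ (or $V(i) = i + c$ to keep it positive on the finitely many small states), and check that there exist a finite set $F \subset \N$ and a constant $\varepsilon > 0$ such that $\E_i[V(X_1(1))] - V(i) \leq -\varepsilon$ for all $i \notin F$, while $\E_i[V(X_1(1))] < \infty$ for $i \in F$. For $i \geq k_0$ the left-hand side is exactly $m_1$, so one may take $F = \{0,\ldots,k_0-1\}$ and $\varepsilon = -m_1/2 > 0$ (any $\varepsilon \in (0,-m_1)$ works, and for $i \geq k_0$ one even has the bound $m_1$ itself); finiteness of $\E_i[V(X_1(1))]$ for the finitely many $i \in F$ is immediate from the exponential moment in Assumption~\ref{as2}. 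By the standard Foster/Tweedie criterion for countable state spaces (see, e.g., \cite[Ch.~2]{FaMaMe-95}), an irreducible chain admitting such a test function is positive recurrent, which is the claim. Alternatively, one can phrase the same argument probabilistically: couple $\{X_1(n)\}$ above a level $\geq k_0$ with the homogeneous walk of negative drift $m_1$, which drifts to $-\infty$ by the law of large numbers and hence returns to $\{0,\ldots,k_0-1\}$ in finite expected time from any level, then bound the expected excursion time inside the finite set $F$ using the finite mean increments there; summing gives a finite expected return time to a fixed reference state.

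I do not expect any genuine obstacle here — this is exactly the "straightforward" result flagged in the text. The only points requiring minimal care are: (i) that the induced-chain transition probabilities are honest probability measures (no mass escapes to $-\infty$), which follows from Assumption~\ref{as1}(iii), guaranteeing $X_1(n)$ stays in $\N$; and (ii) that the finitely many boundary states do not spoil positive recurrence, which is handled automatically by putting them all into the exceptional set $F$ of the drift criterion. One should also note that irreducibility of $\{X_1(n)\}$ is inherited from Assumption~\ref{as3} (as already remarked in the text just before the lemma), so the Foster criterion applies directly.
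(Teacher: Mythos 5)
Your proof is correct. The paper in fact offers no proof of this lemma at all—it is stated as a ``straightforward result'' right after introducing $X_1$ as the induced Markov chain, with only the general pointer to \cite{FaMaMe-95}—and the Foster--Lyapunov argument you give (linear test function, drift exactly $m_1<0$ from every state $i\geq k_0$, finite mean increments on the finite exceptional set $\{0,\ldots,k_0-1\}$ via Assumptions~\ref{as1}--\ref{as2}, irreducibility from Assumption~\ref{as3}) is precisely the standard argument the authors implicitly invoke, so there is nothing to add beyond the minor observation that the exponential moment of the $X$-increment is extracted from the joint bound of Assumption~\ref{as2} using the lower bound $\Delta Y\geq -k_0$ of Assumption~\ref{as1}, as you essentially do.
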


\begin{figure}
\begin{tikzpicture}
\begin{scope}[scale=0.4]
\draw[white, fill=ngreen!10] (-4.5,-4.5) rectangle (4.5,4.5);
\draw[white, thick] (-4.5,-4.5) grid (4.5,4.5);
\draw[->, ngreen, semithick] (0,0) -- (-1,2);
\draw[->, ngreen, semithick] (0,0) -- (-3,1);
\draw[->, ngreen, semithick] (0,0) -- (-3,3);
\draw[->, ngreen, semithick] (0,0) -- (1,3);
\draw[->, ngreen, semithick] (0,0) -- (2,-2);
\draw[->, ngreen, semithick] (0,0) -- (-3,-3);
\draw[->, ngreen, semithick] (0,0) -- (-1,-2);
\draw[->, ngreen, semithick] (0,0) -- (2,2);
\draw[->, ngreen, semithick] (0,0) -- (0,-1);
\draw[->, ngreen, semithick] (0,0) -- (2,0);
\draw[->, ngreen, semithick] (0,0) -- (-1,0);
\node[above right, ngreen] at (1.8,1.8) {\tiny{$\mu(i,j)$}};
\draw[->] (-5,0) -- (5,0);
\draw[->] (0,-5) -- (0,5);
\end{scope}
\end{tikzpicture}
\qquad
\begin{tikzpicture}
\begin{scope}[scale=0.4]
\draw[white, fill=ngreen!10] (2.5,-4.5) rectangle (9.5,4.5);
\draw[Apricot!100, fill=Apricot!100] (0,-4.5) rectangle (0.5,4.5);
\draw[Apricot!55, fill=Apricot!55] (0.5,-4.5) rectangle (1.5,4.5);
\draw[Apricot!25, fill=Apricot!25] (1.5,-4.5) rectangle (2.5,4.5);
\draw[white, thick] (0,-4.5) grid (9.5,4.5);
\draw[->, ngreen, semithick] (6,0) -- (8,-1);
\draw[->, ngreen, semithick] (6,0) -- (7,-3);
\draw[->, ngreen, semithick] (6,0) -- (9,-3);
\draw[->, ngreen, semithick] (6,0) -- (9,1);
\draw[->, ngreen, semithick] (6,0) -- (4,2);
\draw[->, ngreen, semithick] (6,0) -- (3,-3);
\draw[->, ngreen, semithick] (6,0) -- (4,-1);
\draw[->, ngreen, semithick] (6,0) -- (8,2);
\draw[->, ngreen, semithick] (6,0) -- (5,0);
\draw[->, ngreen, semithick] (6,0) -- (6,2);
\draw[->, ngreen, semithick] (6,0) -- (6,-1);
\draw[->, black, semithick] (1,0) -- (2,1);
\draw[->, black, semithick] (1,0) -- (2,0);
\draw[->, black, semithick] (1,0) -- (4,1);
\draw[->, black, semithick] (1,0) -- (0,2);
\draw[->, black, semithick] (1,0) -- (0,-2);
\node[above right, ngreen] at (5.8,1.8) {\tiny{$\mu(i,j)$}};
\node[above right, black] at (-0.2,1.8) {\tiny{$\mu''(i,j)$}};
\draw[->] (0,-5) -- (0,5);
\draw[->] (-0.5,0) -- (10,0);
\end{scope}
\end{tikzpicture}\qquad
\begin{tikzpicture}
\begin{scope}[scale=0.4]
\draw[white, fill=ngreen!10] (-4.5,2.5) rectangle (4.5,9.5);
\draw[palecerulean!100, fill=palecerulean!100] (-4.5,0) rectangle (4.5,0.5);
\draw[palecerulean!55, fill=palecerulean!55] (-4.5,0.5) rectangle (4.5,1.5);
\draw[palecerulean!25, fill=palecerulean!25] (-4.5,1.5) rectangle (4.5,2.5);
\draw[white, thick] (-4.5,0) grid (4.5,9.5);
\draw[->, ngreen, semithick] (0,6) -- (-1,8);
\draw[->, ngreen, semithick] (0,6) -- (-3,7);
\draw[->, ngreen, semithick] (0,6) -- (-3,9);
\draw[->, ngreen, semithick] (0,6) -- (1,9);
\draw[->, ngreen, semithick] (0,6) -- (2,4);
\draw[->, ngreen, semithick] (0,6) -- (-3,3);
\draw[->, ngreen, semithick] (0,6) -- (-1,4);
\draw[->, ngreen, semithick] (0,6) -- (2,8);
\draw[->, ngreen, semithick] (0,6) -- (0,5);
\draw[->, ngreen, semithick] (0,6) -- (2,6);
\draw[->, ngreen, semithick] (0,6) -- (-1,6);
\draw[->, black, semithick] (0,1) -- (1,2);
\draw[->, black, semithick] (0,1) -- (-1,2);
\draw[->, black, semithick] (0,1) -- (1,4);
\draw[->, black, semithick] (0,1) -- (2,0);
\draw[->, black, semithick] (0,1) -- (-2,0);
\node[above right, ngreen] at (1.8,5.8) {\tiny{$\mu(i,j)$}};
\node[above right, black] at (1.8,-0.2) {\tiny{$\mu'(i,j)$}};
\draw[->] (-5,0) -- (5,0);
\draw[->] (0,-0.5) -- (0,10);
\end{scope}
\end{tikzpicture}
\caption{Description of the auxiliary models $Z_0$, $Z_1$ and $Z_2$.}
\label{fig:model_Z0_Z1_Z2}
\end{figure}

Assume $m_1 < 0$ and $m_2 < 0$, denote by $\{\pi_1(i)\}_{i\in\N}$ and $\{\pi_2(j)\}_{j\in\N}$ the invariant distributions of $\{X_1(n)\}$ and $\{Y_2(n)\}$, and introduce the quantities
\begin{equation}
\label{definition_vect_2}
     \left\{\begin{array}{lll}
     V_1 &=& \displaystyle \sum_{i=0}^\infty \pi_1(i) \E_{(i,0)} \bigl(Y_1(1)\bigr),\medskip\\
     V_2 &=& \displaystyle \sum_{j=0}^\infty \pi_2(j) \E_{(0,j)} \bigl(X_2(1)\bigr).
     \end{array}\right.
\end{equation}
According to the definition of the transition probabilities \eqref{eq:distrib_p_1} and thanks to Assumption~\ref{as2}, we have
\begin{equation*}
\sum_{i=0}^\infty \pi_1(i) \E_{(i,0)} \bigl(|Y_1(1)|\bigr) = \displaystyle \sum_{i=0}^{k_0-1} \pi_1(i) \E_{(i,0)} \bigl(|Y_1(1)|\bigr) + \left(1- \sum_{i=0}^{k_0-1} \pi_1(i)\right)\E_{(k_0,0)} \bigl(|Y_1(1)|\bigr)< \infty.
\end{equation*} 
The quantity $V_1$ is therefore well defined.  By  Theorem~12 of Prabhu, Tang and Zhu~\cite{Prabhu-Tang-Zhu},  $V_1$ represents the velocity of the fluid limit of the local random walk $Z_1$: for any $(k,\ell)\in\N\times\Z$, $\P_{(k,\ell)}$-a.s., 
\begin{equation}\label{fluid_limit_eq} 
   \lim_{n\to \infty} \frac{Y_1(n)}{n} = V_1. 
\end{equation}
By symmetry, analogous results hold for $X_2(n)$ and $V_2$.

\section{Results}
\label{sec:main_results}

\subsection{Statements}
Introduce two further assumptions:
\begin{Assumption}
\label{as5}
The coordinates $m_1,m_2$ of the drift vector \eqref{eq:drift} are both negative.
\end{Assumption}
\begin{Assumption}
\label{as6}
The quantities $V_1,V_2$ in \eqref{definition_vect_2} are positive.
\end{Assumption}

In particular, Assumptions \ref{as5} and \ref{as6} imply that the reflected random walk $\{Z(n)\}$ is transient, with possible escape at infinity along each axis, see Figure \ref{fig:paths} and Proposition \ref{sec4_lem1}.
\begin{figure}[ht!]
\includegraphics[width=0.4\textwidth]{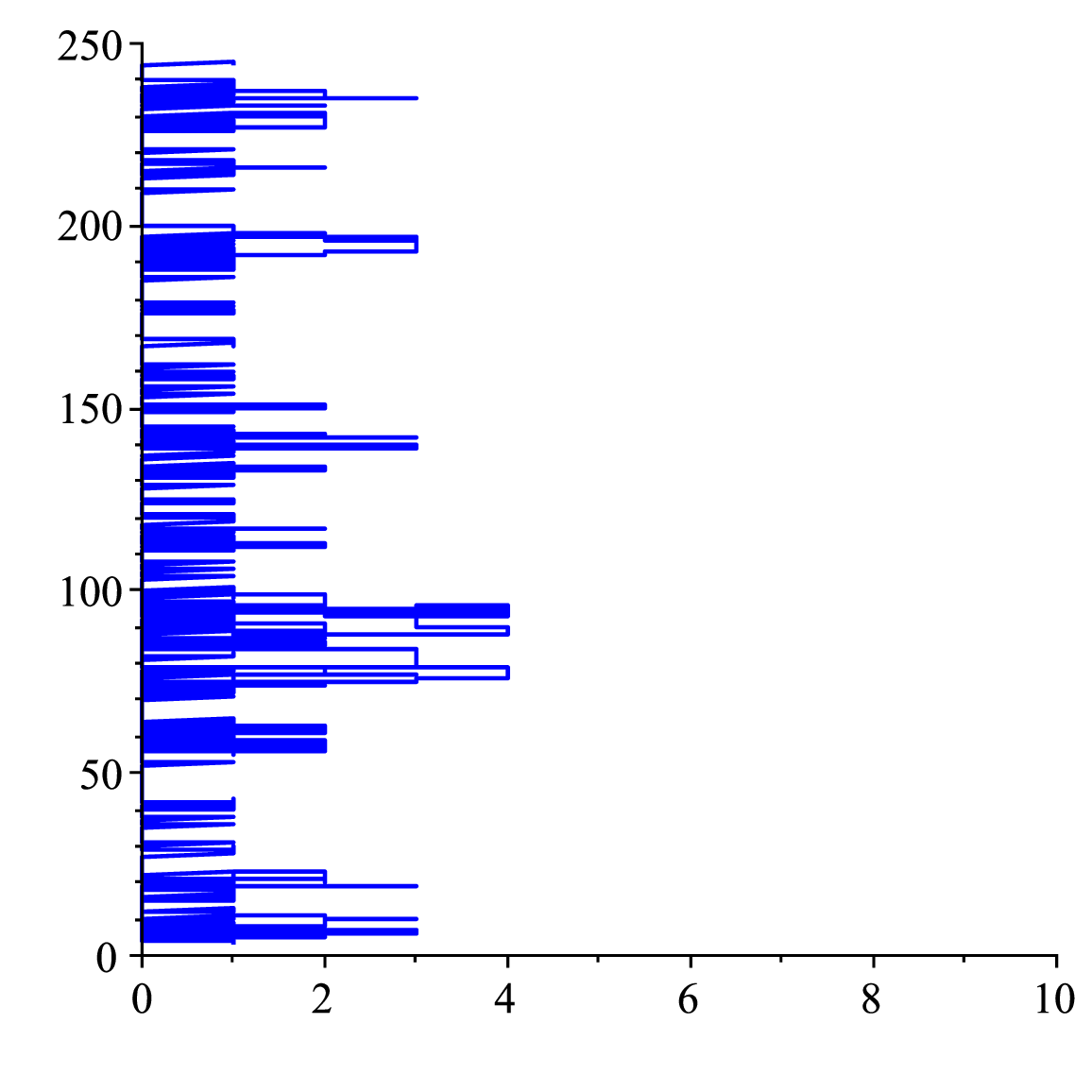}\qquad
\includegraphics[width=0.4\textwidth]{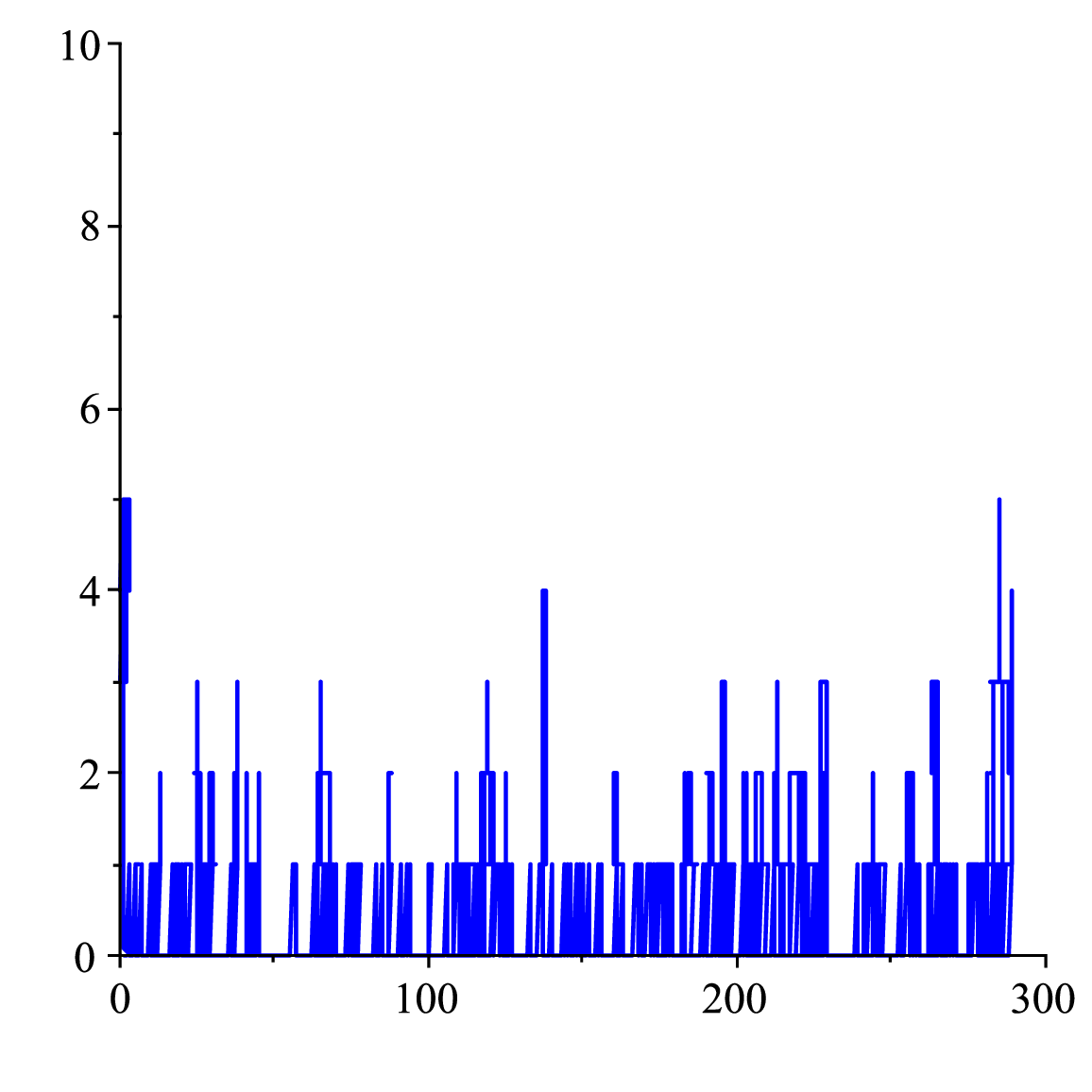}
\caption{Two typical paths, with escape along the vertical and horizontal axes, respectively.}
\label{fig:paths}
\end{figure}

We are now ready to state our results. Recall that the Green function at $(i,j)$ starting from $(i_0,j_0)$ is
\begin{equation}
\label{eq:def_Green_function}
     g\bigl((i_0,j_0) \to (i,j)\bigr)=\sum_{n=0}^\infty \P_{(i_0,j_0)} \bigl(Z(n) = (i,j)\bigr)=\sum_{n=0}^\infty  p^{(n)}\bigl((i_0,j_0) \to (i,j)\bigr).
\end{equation}
Moreover, ${\cal N}_1$ and ${\cal N}_2$ will denote the numbers of visits of $\{Z(n)\}$ to the 
boundary strips
$\N\times\{0,\ldots,k_0-1\}$ and $\{0,\ldots,k_0-1\}\times\N$:
\begin{equation}
\label{eq:def_N1_N2}
{\cal N}_1 = \sum_{n=0}^\infty \1_{\N\times\{0,\ldots,k_0-1\}}(Z(n))
\quad \text{and} \quad
 {\cal N}_1 = \sum_{n=0}^\infty \1_{\{0,\ldots,k_0-1\}\times\N}(Z(n)).
\end{equation}
The numbers ${\cal N}_1$ and ${\cal N}_2$ may be infinite. Let finally $\pi_1,\pi_2$ be the invariant distributions of $\{X_1(n)\}$ and $\{Y_2(n)\}$, see Lemma \ref {positive_recurrence_lemma} and below.

Our first preliminary result is the following: 

\begin{thm}[Boundary asymptotics of the Green function]
\label{thm:main-1}
Under Assumptions \ref{as1}--\ref{as6},
for any  $i\in\N$ and $(i_0,j_0)\in\N^2$, 
\begin{equation}
\label{eq:thm:main-1a}
   \lim_{j\to\infty} g\bigl((i_0,j_0)\to(i,j)\bigr)  =   \P_{(i_0,j_0)} \bigl({\cal N}_1 < \infty\bigr)\frac{\pi_1(i)}{V_1}
\end{equation}
and
\begin{equation}
\label{eq:thm:main-1b}
   \lim_{j\to\infty} g\bigl((i_0,j_0)\to(j,i)\bigr) =  \P_{(i_0,j_0)} \bigl({\cal N}_2 < \infty\bigr)\frac{\pi_2(i)}{V_2}.
\end{equation}
\end{thm}

Theorem~\ref{thm:main-1} shows that when moving to infinity in horizontal (or vertical) direction, the Green function decomposes asymptotically as a product of two terms, see \eqref{eq:thm:main-1a}:
the first one, $\P_{(i_0,j_0)}\bigl({\cal N}_1 < \infty\bigr)$, is the probability of escape along the horizontal axis (it is a harmonic function); the second part, namely, $\pi_1(i)/V_1$, is the asymptotics of the Green function of the half-space random walk $Z_1$. See \eqref{MAP_eq4} applied for the Markov-additive process $Z_1$.

To get the asymptotics of the Green functions as $i+j$ goes to infinity along an angular direction $j/i\to \tan\gamma$, $\gamma\in[0,\frac{\pi}{2}]$, which we will derive in Theorem~\ref{thm:main-3}, we need more restrictive assumptions on the steps of the process $Z$.

For $(i, j) \in  \mathbb N^2$ and $(u,v)\in \mathbb R^2$, let 
\begin{equation*}
   \Phi_{i,j} (u,v)= \E_{i, j} \left( \exp \bigl\langle (u,v), (Z(1)-(i,j) ) \bigr\rangle \right).
\end{equation*}
There are only $(k_0+1)^2$ different functions, those with $i,j\in\{0,\ldots, k_0\}$. The following assumption is clearly stronger than Assumption~\ref{as2}.
 
\medskip

\setcounter{Assumption}{2}
\begin{Assumptionbis}
\label{as2p}
For any $i,j\in\{0,\ldots, k_0\}$, $\Phi_{i,j}(u,v) <\infty$ in a neighborhood of $(0,0)$.
  \end{Assumptionbis}
  
This assumption is needed to prove an exponential rate of convergence in \eqref{eq:thm:main-1a} and \eqref{eq:thm:main-1b}.
  
  \begin{thm}[Theorem~\ref{thm:main-1} with exponential convergence]
\label{thm:main-2}
Under Assumptions \ref{as1}--\ref{as6} and \ref{as2p}, 
for any $i\in\N$, there exist positive constants $C_i$ and $\delta_i$ such that for any $(i_0,j_0)\in\N^2$ and $j\in\mathbb N$,
\begin{equation}
\label{eq:thm:main-1}
   \left\vert g\bigl((i_0,j_0)\to(i,j)\bigr) -    \P_{(i_0,j_0)} \bigl({\cal N}_1 < \infty\bigr)\frac{\pi_1(i)}{V_1}\right\vert \leq C_i \exp( - \delta_i  (j-j_0)).
\end{equation}
Analogously,
for any $j\in\N$, there exist positive constants $C_j'$ and $\delta_j'$ such that for any $(i_0,j_0)\in\N^2$ and $i\in\mathbb N$,
\begin{equation*}
   \left\vert g\bigl((i_0,j_0)\to(i,j)\bigr) -    \P_{(i_0,j_0)} \bigl({\cal N}_2 < \infty\bigr)\frac{\pi_2(j)}{V_2}\right\vert \leq C_j' \exp( - \delta_j'  (i-i_0)).
\end{equation*}
\end{thm}

Theorem~\ref{thm:main-2} is the first step in order to get  the asymptotics of the Green functions along angular directions. Due to the exponential convergence in \eqref{eq:thm:main-1a} and \eqref{eq:thm:main-1b}, the generating functions of the Green functions along horizontal and vertical axes admit a pole at $x=1$ and $y=1$. This result will be essential for further analytic analysis.

In order to state our main results, we need to state two new assumptions, which are stronger than Assumption~\ref{as2p}. We introduce
\begin{equation*}
   {\cal Q}=\{(u,v) \in {\mathbb R}^2 : \Phi_{k_0,k_0} (u,v) \leq 1\}.
\end{equation*}
\setcounter{Assumption}{2}
\begin{Assumptionbisbis}
\label{as2pp}
Any point of $\partial {\mathcal Q}$ has a neighborhood in ${\mathbb R}^2$ in which $\Phi_{k_0,k_0}$ is finite.
\end{Assumptionbisbis} 

This assumption corresponds to Ney and Spitzer's assumption (1.4) in the paper \cite{NeSp-66}, where the Martin boundary for homogeneous random walks is constructed via a Cramer's change of measure. Under it, the set
\begin{equation*}
   \partial {\cal Q} = \{(u,v) \in {\mathbb R}^2 : \Phi_{k_0,k_0}(u,v) =1\}
\end{equation*}
 is a closed curve homeomorphic to a circle, and the equation  
 $\Phi_{k_0,k_0}(u,0)=1$ (resp.\ $\Phi_{k_0,k_0}(0,v)=1$)  on ${\mathbb R}$  has two roots 
 $u=0$ and $u=u_1>0$ (resp.\ $v=0$ and $v=v_1>0$).
 
We also assume that:
\setcounter{Assumption}{2}
\begin{Assumptionbisbisbis}
\label{as2ppp}
The functions $\Phi_{i,k_0}$ (resp.\ $\Phi_{k_0,j}$) are finite in a neighborhood of $(u_1,0)$ (resp.\ $(0, v_1)$), for any $i\in\{0,\ldots, k_0-1\}$ (resp.\ $j\in\{0,\ldots, k_0-1\}$). The functions $\Phi_{i,j}$ are finite in both of these neighborhoods, for any $i,j\in\{0,\ldots, k_0-1\}$.
\end{Assumptionbisbisbis} 

Under Assumptions \ref{as2pp} and \ref{as2ppp}, the points
 \begin{equation}
     \label{u1v1}
 x_1=\exp(u_1) \quad \text{and}\quad  y_1=\exp(v_1)
 \end{equation}
 do exist and determine the asymptotics of the invariant measures $\pi_1(i)$
 and $\pi_2(j)$ above. Namely, there exist constants $A_1>0$ and $A_2>0$ such that as $i,j\to\infty$,
 \begin{equation}
 \label{asinvm}
    \pi_1(i) = \frac{A_1}{x_1^i} +o(x_1^{-i}) \quad\text{and}\quad \pi_2(j)
      =\frac{A_2}{y_1^j} + o(y_1^{-j}).
 \end{equation}
This classical result is a part of the elementary Lemma \ref{lem:inv_mes}.
We are now ready to formulate our main result.
\begin{thm}[Interior asymptotics of the Green function]
\label{thm:main-3}
Under Assumptions \ref{as1}--\ref{as6}, \ref{as2pp} and \ref{as2ppp},
as $i+j$ goes to infinity along an angular direction ($j/i$ going to a constant in $[0,\infty]$), one has
\begin{equation}
\label{eq:mainres}
     g\bigl((i_0,j_0) \to (i,j)\bigr) = \mathbb P_{(i_0,j_0)}\bigl(\mathcal N_1<\infty\bigr)\frac{\pi_1(i)}{V_1}  + \mathbb P_{(i_0,j_0)}\bigl(\mathcal N_2<\infty\bigr)\frac{\pi_2(j)}{V_2} + o(x_1^{-i} + y_1^{-j}).
\end{equation} 
\end{thm}

 Let now
\begin{equation}
\label{eq:def_gamma_0}
     t_0=\tan\gamma_0=\frac{u_1}{v_1}=\frac{\log x_1}{\log y_1}.
\end{equation}
\begin{thm}[Martin boundary]
\label{thm:main-4}
Under Assumptions \ref{as1}--\ref{as6}, \ref{as2pp} and \ref{as2ppp},
the minimal Martin boundary is a union of two points. Moreover, if $t_0\in\mathbb Q$ (resp.\ $t_0\notin \mathbb Q$), then the full Martin boundary is homeomorphic to $\mathbb Z\cup\{\pm\infty\}$ (resp.\ $\mathbb R\cup\{\pm\infty\}$).
\end{thm}
The angular direction $\gamma_0$ subdivides the space into a lower and an upper part, see Figure~\ref{fig:subdivision}, where the limits are the two respective minimal harmonic functions, plus the critical direction $\gamma_0$, along which one gets either countably or continuum many Martin limits, according to the nature of $\tan \gamma_0$. When parametrized by $\mathbb Z$ or $\mathbb R$, those Martin limits will accumulate at one of the two minimal harmonic functions when the parameters tend to $\pm\infty$. This captures the compactness of the boundary, which then can be thought of as $\mathbb Z\cup\{\pm\infty\}$ or $\mathbb R\cup\{\pm\infty\}$.
\begin{figure}
\centering
\begin{tikzpicture}
\begin{scope}[scale=0.5]
\draw[nred,fill=nred!30,domain=0:31.5] plot ({6.5*cos(\x)}, {6.5*sin(\x)});
\draw[dblue,fill=dblue!30,domain=33.5:90] plot ({6.5*cos(\x)}, {6.5*sin(\x)});
\draw[nred!30,fill=nred!30] (0.1,0) -- (6.5,0) -- (5.54,3.38);
\draw[dblue!30,fill=dblue!30] (0,0.1) -- (0,6.5) -- (5.5,3.55);
\draw[white, thick] (-1.5,-1.5) grid (6.5,6.5);
\draw[nred,thick,domain=0:31.5] plot ({6.5*cos(\x)}, {6.5*sin(\x)});
\draw[dblue,thick,domain=33.5:90] plot ({6.5*cos(\x)}, {6.5*sin(\x)});
\draw[green,thick,->] (0,0) -- (5.5,3.47);
\draw[->] (0,-1.5) -- (0,6.5);
\draw[->] (-1.5,0) -- (6.5,0);
\draw[nred,thick,->] (0,0) -- (6.2,2);
\draw[dblue,thick,->] (0,0) -- (0.4,6.5);
\draw[thick,domain=0:32] plot ({cos(\x)}, {sin(\x)});
\node[above right] at (0.8,-0.1) {\tiny{$\gamma_0$}};
\end{scope}
\end{tikzpicture}
\caption{The angular direction $\gamma_0$ subdivides the quarter plane into two wedges, each of which corresponding to one minimal harmonic function. Martin limits accumulate along the critical direction.}
\label{fig:subdivision}
\end{figure}

Remark that the angle $\gamma_0$ does not coincide in general with the angle made by the drift vector, see Figure~\ref{fig:examples_gamma0}. We further notice that it is easy to construct elementary examples with rational (resp.\ non-rational) $t_0$, see again Figure~\ref{fig:examples_gamma0}.



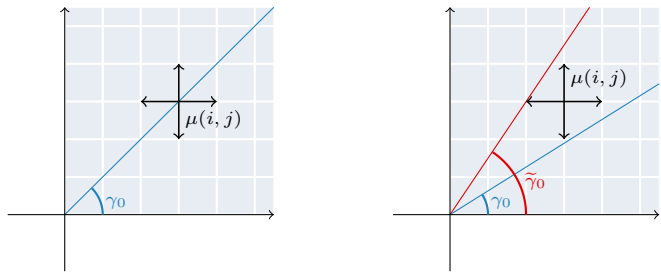
\begin{figure}
\begin{tikzpicture}
\begin{scope}[scale=0.5]
\draw[white, fill=dblue!10] (0,0) rectangle (5.5,5.5);
\draw[white, thick] (-1.5,-1.5) grid (5.5,5.5);
\draw[nblue] (0,0) -- (5.5,5.5);
\draw[->] (0,-1.5) -- (0,5.5);
\draw[->] (-1.5,0) -- (5.5,0);
\draw[->, black, semithick] (3,3) -- (4,3);
\draw[->, black, semithick] (3,3) -- (2,3);
\draw[->, black, semithick] (3,3) -- (3,4);
\draw[->, black, semithick] (3,3) -- (3,2);
\node[above right, black] at (2.9,2.0) {\tiny{$\mu(i,j)$}};
\draw [nblue,thick,domain=0:45] plot ({cos(\x)}, {sin(\x)});
\node[above right, nblue] at (0.8,-0.1) {\tiny{$\gamma_0$}};
\end{scope}
\end{tikzpicture}\qquad\qquad
\begin{tikzpicture}
\begin{scope}[scale=0.5]
\draw[white, fill=dblue!10] (0,0) rectangle (5.5,5.5);
\draw[white, thick] (-1.5,-1.5) grid (5.5,5.5);
\draw[nred] (0,0) -- (3.67,5.5);
\draw[nblue] (0,0) -- (5.5,3.47);
\draw[->] (0,-1.5) -- (0,5.5);
\draw[->] (-1.5,0) -- (5.5,0);
\draw[->, black, semithick] (3,3) -- (4,3);
\draw[->, black, semithick] (3,3) -- (2,3);
\draw[->, black, semithick] (3,3) -- (3,4);
\draw[->, black, semithick] (3,3) -- (3,2);
\node[above right, black] at (2.9,3.1) {\tiny{$\mu(i,j)$}};
\draw [nblue,thick,domain=0:32] plot ({cos(\x)}, {sin(\x)});
\node[above right, nblue] at (0.8,-0.1) {\tiny{$\gamma_0$}};
\node[above right, nred] at (1.7,0.4) {\tiny{$\widetilde\gamma_0$}};
\draw [nred,thick,domain=0:56] plot ({2*cos(\x)}, {2*sin(\x)});
\end{scope}
\end{tikzpicture}
\caption{Left: for a symmetric model (meaning $\mu(i,j)=\mu(j,i)$), the angle $\gamma_0=\frac{\pi}{4}$ coincides with the angle $\widetilde\gamma_0$ made by the drift vector; in this case $t_0=1$. Right: for the example $\mu(1,0)=\frac{1}{6}$, $\mu(0,-1)=\frac{3}{8}$, $\mu(-1,0)=\frac{1}{3}$, $\mu(0,1)=\frac{1}{8}$ with (non-symmetric) transition probabilities, these two angles are different ($\gamma_0=\arctan\frac{\log 2}{\log 3}$ and $\widetilde\gamma_0=\arctan\frac{3}{2}$).}
\label{fig:examples_gamma0}
\end{figure}

\subsection{Proof of Theorem~\ref{thm:main-4}}
\begin{proof}
Plugging estimates (\ref{asinvm})  into \eqref{eq:mainres} and using the definition \eqref{eq:def_gamma_0} of $t_0$, one readily obtains
\begin{equation}
\label{eq:asymptotics_Martin_kernel}
     \frac{g\bigl((i_0,j_0) \to (i,j)\bigr)}{g\bigl((0,0) \to (i,j)\bigr)}\sim \frac{\bigl(\mathbb P_{(i_0,j_0)}\bigl(\mathcal N_1<\infty\bigr)A_1/V_1\bigr)y_{1}^{j-it_0}  + \mathbb P_{(i_0,j_0)}\bigl(\mathcal N_2<\infty\bigr)A_2/V_2}{\bigl(\mathbb P_{(0,0)}\bigl(\mathcal N_1<\infty\bigr)A_1/V_1\bigr)y_{1}^{j-it_0}  + \mathbb P_{(0,0)}\bigl(\mathcal N_2<\infty\bigr)A_2/V_2}.
\end{equation}
Let us examine three different regimes when $i+j$ goes to infinity along an angular direction $j/i\to \tan\gamma$; see Figures~\ref{fig:subdivision} and \ref{fig:examples_gamma0}. First, if $\gamma>\gamma_0$, then $j-it_0\to\infty$ and with \eqref{eq:asymptotics_Martin_kernel}, the limit Martin kernel equals
\begin{equation*}
     k\bigl(i_0,j_0\bigr)=\lim\limits_{\substack{i+j\to\infty\\j/i\to\tan\gamma}}\frac{g\bigl((i_0,j_0) \to (i,j)\bigr)}{g\bigl((0,0) \to (i,j)\bigr)}= \frac{\mathbb P_{(i_0,j_0)}\bigl(\mathcal N_1<\infty\bigr)}{\mathbb P_{(0,0)}\bigl(\mathcal N_1<\infty\bigr)}.
\end{equation*}
For 
the same reasons, if now $\gamma<\gamma_0$, then $j-it_0\to-\infty$ and
\begin{equation*}
     k\bigl(i_0,j_0\bigr)=\frac{\mathbb P_{(i_0,j_0)}\bigl(\mathcal N_2<\infty\bigr)}{\mathbb P_{(0,0)}\bigl(\mathcal N_2<\infty\bigr)}.
\end{equation*}
The limit case $\gamma=\gamma_0$ is the most interesting. The set of points $j-it_0$ in \eqref{eq:asymptotics_Martin_kernel} lies on an additive subgroup of $\mathbb R$, namely, $\mathbb Z+t_0\mathbb Z$. If $t_0=\frac{n_0}{m_0}$ is rational, then the 
limits for the Martin kernel are ($n\in\mathbb Z \cup\{\pm\infty\}$)
\begin{equation*}
      \frac{\bigl(\mathbb P_{(i_0,j_0)}\bigl(\mathcal N_1<\infty\bigr)A_1/V_1\bigr)y_{1}^{n/m_0}  + \mathbb P_{(i_0,j_0)}\bigl(\mathcal N_2<\infty\bigr)A_2/V_2}{\bigl(\mathbb P_{(0,0)}\bigl(\mathcal N_1<\infty\bigr)A_1/V_1\bigr)y_{1}^{n/m_0}  + \mathbb P_{(0,0)}\bigl(\mathcal N_2<\infty\bigl)A_2/V_2}.
\end{equation*}
In particular, the full Martin boundary is 
countable. If $t_0\notin \mathbb Q$, then the subgroup $\mathbb Z+t_0\mathbb Z$ is dense in $\mathbb R$, and any combination
\begin{equation*}
      \frac{\bigl(\mathbb P_{(i_0,j_0)}\bigl(\mathcal N_1<\infty\bigr)A_1/V_1\bigr)u  + \mathbb P_{(i_0,j_0)}\bigl(\mathcal N_2<\infty\bigr)A_2/V_2}{\bigl(\mathbb P_{(0,0)}\bigl(\mathcal N_1<\infty\bigr)A_1/V_1\bigr)u  + \mathbb P_{(0,0)}\bigl(\mathcal N_2<\infty\bigr)A_2/V_2}
\end{equation*}
appears
in the limit, for any $u\in[0,\infty]$. The statement on the minimal Martin boundary is clear.
\end{proof}

\subsection{Structure of the paper and main ideas of the proofs}
The remaining part of the paper is organized as follows. Preliminary results needed for the proof of Theorems~\ref{thm:main-1}  and~\ref{thm:main-2} are obtained in Sections~\ref{sec:rough_estimates}--\ref{sec:4}. The proof of Theorem~\ref{thm:main-1} is given in Section~\ref{sec:5p}, and  Theorem~\ref{thm:main-2} is proved in Section~\ref{sec:5}. In Sections~\ref{sec:7}--\ref{sec:proofs_2}, we prove Theorem~\ref{thm:main-3}.

In order to prove Theorems~\ref{thm:main-1} and~\ref{thm:main-2}, we consider the stopping times 
\begin{equation*}
     {T}_{k} =\inf\{n > 0: X(n) \leq (k_0-1)\vee k\}, \quad k\geq k_0,
\end{equation*}
and 
\begin{equation*}
\tau = \inf\{n>0: Y(n) < k_0\}
\end{equation*}
(see also Section~\ref{sec:hitting_times}) and we use the following discrete equation:
\begin{multline}\label{sec:structure_eq1}
     {g}\bigl((i,j)\to (k,\ell)\bigr) =\E_{(i,j)}\left( \sum_{n=0}^{\tau-1} \1_{\{X(n) = k, Y(n) = \ell\}}\right)   + \sum_{\substack{0\leq j'< k_0\\ i'\geq 0}} {g}\bigl((i,j)\to(i',j')\bigr) \times \\ \times \sum_{\substack{0\leq i''\leq( k_0-1)\vee k\\ j'' \geq k_0}} \P_{(i',j')}\bigl(Z({T}_{k}) = (i'',j''), {T}_{k}< \tau\bigr) \E_{(i'',j'')}\left( \sum_{n=0}^{\tau-1} \1_{\{X(n) = k, Y(n) = \ell\}}\right),
\end{multline} 
see \eqref{sec4_eq1} in Lemma~\ref{sec4_lem3}. For any $i\geq 0$ and $j\geq k_0$, before time $\tau$, the transition probabilities of the process $Z=(X,Y)$ starting at $(i,j)$ are identical to those of the local Markov additive process $Z_1=(X_1,Y_1)$. Accordingly, the quantities
\begin{equation*}
   \E_{(i,j)}\left( \sum_{n=0}^{\tau-1} \1_{\{X(n) = k, Y(n) = \ell\}}\right)
\end{equation*}
may be investigated through the properties of the Markov additive processes.
We show that for any $i,j,k\in\N$
\be\label{sec:structure_eq2} 
\lim_{\ell\to\infty} \E_{(i,j)}\left( \sum_{n=0}^{\tau-1} \1_{\{X(n) = k, Y(n) = \ell\}}\right) = \frac{\pi_1(k) \P_{i,j)}\bigl(\tau = \infty\bigr)}{V_1}, 
\ee
and we apply then the dominated convergence theorem to get \eqref{eq:thm:main-1a} in Theorem~\ref{thm:main-1}.

To prove that the rate of convergence in \eqref{eq:thm:main-1a} is exponential, i.e., to get \eqref{eq:thm:main-1} in Theorem~\ref{thm:main-2}, we obtain exponential estimates for the quantities $\P_{(i',j')}\bigl(Z({T}_{k}) = (i'',j''), {T}_{k}< \tau\bigr)$ in \eqref{sec:structure_eq1}, see Lemma~\ref{sec4_lem2}, and we prove an exponential rate of convergence in \eqref{sec:structure_eq1}, see Propositions~\ref{prop3_1} and \ref{prop3_1p}.

Theorem~\ref{thm:main-3} is proved in Section~\ref{sec:proofs_2}
by using analytic methods and the result of Theorem~\ref{thm:main-2}. In Section~\ref{sec:7}, we introduce the generating functions of the Green functions and prove that they satisfy the functional equation \eqref{eq:main_func_eq}, which is valid in the region where $\vert x \vert <1$ and $\vert y \vert <1$. The Green functions  
$g\big((i_0, j_0) \to (i,j) \big)$  can be deduced from it by the two-dimensional integral Cauchy formula \eqref{eq:Green_function_double_integral},
and their asymptotics may be found by shifting the integration contours and taking into account singularities of the integrand. For this purpose, the
 generating functions of the Green functions along horizontal and vertical directions $g_\ell(x), \widetilde g_k(y)$ 
   need to be continued outside their unit disc. Due to Theorem~\ref{thm:main-2}, these functions can be continued  meromorphically   
and admit a unique pole at $x=1$ and $y=1$  in 
  $\vert x \vert <1+\epsilon$ and $\vert y \vert <1+\epsilon$ respectively, which is essential for our further analysis.  
  Applying asymptotic techniques to the integral \eqref{eq:Green_function_double_integral} will finally lead to the conclusion of  Theorem~\ref{thm:main-3}.

\section{Rough uniform estimates of the Green functions}
\label{sec:rough_estimates}

In this section, we prove the uniform boundedness of the Green function \eqref{eq:def_Green_function} along the axes. This result proves in particular that under Assumptions~\ref{as1}--\ref{as6} the Markov chain $Z$ is transient and is needed for the proof of  Theorems~\ref{thm:main-2} and \ref{thm:main-3}. Let $m_1,m_2$ be defined in \eqref{eq:drift} and $V_1,V_2$ in \eqref{definition_vect_2}.

\begin{prop} 
\label{sec4_lem1} 
Under Assumptions~\ref{as1}--\ref{as4} 
and if in addition $m_1 < 0$ and $V_1 > 0$, then the Markov chain $\{Z(n)\}$ is transient and for any $k\in\N$, 
\begin{equation*}
     \sup_{i,j,\ell\in\N} g\bigl((i,j)\to(k,\ell)\bigr)  < \infty. 
\end{equation*}
In the same way, under Assumptions~\ref{as1}--\ref{as4}, and 
if $m_2<0$ and $V_2 > 0$, the Markov chain $\{Z(n)\}$ is transient and for any $\ell\in\N$, 
\begin{equation*}
     \sup_{i,j,k\in\N} g\bigl((i,j)\to(k,\ell)\bigr)  < \infty. 
\end{equation*}
\end{prop}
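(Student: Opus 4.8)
The plan is to prove the first statement (transience of $\{Z(n)\}$ together with the uniform bound $\sup_{i,j,\ell} g((i,j)\to(k,\ell))<\infty$ for each fixed $k$); the second is symmetric, exchanging the roles of the two axes. The key intuition is that when $m_1<0$ the process is pushed towards the vertical strip $\{0,\dots,k_0-1\}\times\N$, and the induced chain $\{X_1(n)\}$ there is positive recurrent by Lemma \ref{positive_recurrence_lemma}; meanwhile $V_1>0$ means that along that strip the second coordinate $Y_1$ drifts to $+\infty$ at speed $V_1$ (Proposition \ref{fluid_limit}). Thus each column-level $\{(k,\ell):\ell\le L\}$ is visited only finitely often, and in fact a uniformly bounded number of times, which is exactly what controls $g((i,j)\to(k,\ell))$.

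First I would reduce the Green function bound to a statement about the local walk $Z_1$ on $\N\times\Z$. Write $g((i,j)\to(k,\ell))=\E_{(i,j)}[\,\#\{n:Z(n)=(k,\ell)\}\,]$. Using Assumptions \ref{as1}--\ref{as2} (bounded downward jumps of size $\le k_0$, exponential moments of the jumps), a first-entrance decomposition at the last visit to the strip $\{i<k_0\}\times\N$ --- or rather a coupling argument --- lets me compare visits of $Z$ to $(k,\ell)$ with visits of $Z_1$ started from a point on the relevant level. The point is that $Z$ and $Z_1$ have identical transition kernels away from the horizontal strip $\N\times\{0,\dots,k_0-1\}$; since $k$ is fixed (only finitely many columns matter) and $m_1<0$ forces excursions away from the $Y$-axis to be short in the $X$-direction, the contribution of paths that interact with the horizontal strip is controlled by the rough Lyapunov/drift argument. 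So it suffices to bound $\sup_{j} g_1((i',j')\to(k,\ell))$ uniformly, where $g_1$ is the Green function of $Z_1$.

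For $Z_1$, I would argue as follows. Decompose a trajectory of $Z_1$ according to its successive visits to the vertical strip $S=\{0,\dots,k_0-1\}\times\Z$: between consecutive visits to $S$ the walk behaves like the homogeneous walk $Z_0$ started inside a left half-plane, which drifts to the left (since $m_1<0$) with exponential moments, so such excursions return to $S$ with probability $1$ and the number of steps spent at any fixed $X$-coordinate during an excursion has a uniformly bounded expectation (a one-dimensional renewal estimate using Assumption \ref{as2}). On $S$ itself, project onto the second coordinate: by Proposition \ref{fluid_limit}, $Y_1(n)/n\to V_1>0$ almost surely from every starting point in $\N\times\Z$, so the expected number of visits of $Y_1$ to any fixed level is finite; combined with positive recurrence of $\{X_1(n)\}$ and a uniform integrability input (again from the exponential moment bound, to upgrade the a.s.\ convergence to an $L^1$/renewal-type statement, e.g.\ via Wald's identity on the induced chain $\{X_1(n)\}$ whose stationary mean increment of $Y$ is $V_1$), one gets that $\E[\#\{n:Y_1(n)=\ell\}]$ is bounded uniformly in the starting point. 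Summing over the finitely many values of the first coordinate $0,\dots,k_0-1$ and over the bounded excursion contributions yields $\sup g_1((i',j')\to(k,\ell))<\infty$, hence transience of $Z_1$ and then of $Z$.

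The main obstacle, I expect, is turning the almost-sure fluid-limit statement $Y_1(n)/n\to V_1$ into the \emph{uniform} (in the initial condition) expectation bound on the number of visits to a level: a.s.\ ballisticity alone does not immediately give a uniform Green-function estimate, because a walk started very high up could in principle linger. The fix is to use that $\{X_1(n)\}$ is positive recurrent and to run the argument on the induced (embedded) chain on $S$: between returns to $S$ the $Y$-increment has stationary mean $V_1>0$ and uniformly controlled exponential moments (Assumption \ref{as2}), so the embedded $Y$-process is a genuine random walk with positive drift and uniformly light tails, for which the expected number of visits to a point is bounded independently of the start by classical one-dimensional renewal theory; the excursions away from $S$ only add a uniformly bounded multiplicative factor. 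Care is needed to handle the finitely many boundary columns $i<k_0$ separately and to make sure the comparison between $Z$ and $Z_1$ (which differ near the horizontal axis) costs only a bounded factor when the target column $k$ is fixed.
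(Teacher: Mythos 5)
There is a genuine gap, and it sits exactly where you yourself flag that ``care is needed'': the claim that the interaction of $Z$ with the horizontal strip $\N\times\{0,\ldots,k_0-1\}$ ``costs only a bounded factor''. Note first that the hypotheses of the first assertion are only $m_1<0$ and $V_1>0$: nothing controls the behaviour near the horizontal boundary, and even under the full standing Assumptions \ref{as1}--\ref{as5} the walk visits that strip infinitely often with positive probability (escape along the horizontal axis, i.e.\ $\mathcal N_2=\infty$). So in your decomposition the number of ``rounds'' in which $Z$ leaves the horizontal strip and climbs back to the column $\{x=k\}$ is not a.s.\ finite, and the expected number of such rounds that actually reach the target point $(k,\ell)$ is itself a Green-function-type quantity of the same nature as the one you are trying to bound; the ``rough Lyapunov/drift argument'' you invoke gives no handle on it. In the paper this interaction is only controlled later (Lemmas \ref{sec4_lem2} and \ref{sec4_lem3}, then the proof of Theorem \ref{thm:main-1}), and those arguments use Proposition \ref{sec4_lem1} as an input, so your route as sketched is either incomplete or circular. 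The renewal analysis you propose for the local walk $Z_1$ (embedded chain at returns of $X_1$ to a fixed state, positive mean increment $V_1/\pi_1(k)$, exponential tails) is sound --- it is essentially what the paper does later to obtain the sharper Proposition \ref{prop2_2} --- but it is far more than is needed here and does not by itself treat the original walk $Z$.

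The missing (and much lighter) idea is to bound the Green function through the return probability: for all $(i,j)$ one has $g\bigl((i,j)\to(k,\ell)\bigr)\le g\bigl((k,\ell)\to(k,\ell)\bigr)=\bigl(1-\P_{(k,\ell)}(\tau(k,\ell)<\infty)\bigr)^{-1}$, which disposes of the supremum over starting points at once; it then suffices to show $\sup_{\ell}\P_{(k,\ell)}\bigl(\tau(k,\ell)<\infty\bigr)<1$. Splitting the first-return event at the hitting time $\tau$ of the horizontal strip gives $\P_{(k,\ell)}\bigl(\tau(k,\ell)<\infty\bigr)\le \P_{(k,\ell)}(\tau<\infty)+\P_{(k,\ell)}\bigl(\tau(k,\ell)<\infty,\ \tau(k,\ell)<\tau\bigr)$; before $\tau$ the walk coincides with $Z_1$, so the second term is at most $\P_{(k,0)}\bigl(\tau_1(k,0)<\infty\bigr)<1$ (vertical translation invariance of $Z_1$ together with $Y_1(n)\to+\infty$ a.s., Proposition \ref{fluid_limit}), while the first term equals $\P_{(k,\ell)}(\tau_1<\infty)$ and tends to $0$ as $\ell\to\infty$ for the same reason. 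This handles all large $\ell$ uniformly; transience follows by irreducibility, and the finitely many remaining $\ell$ are then covered by transience. Nothing that happens after the walk touches the horizontal strip ever needs to be controlled, which is precisely the point your plan leaves open.
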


\begin{proof}
We give a proof of the first assertion of this lemma;
the second assertion follows by symmetry, exchanging the two coordinates.
It is well known that for any $(i,j), (k,\ell)\in\N^2$, 
\begin{multline*}
   g\bigl((i,j)\to(k,\ell)\bigr)  = \P_{(i,j)}\bigl(Z(n) = (k,\ell)  \text{ for some }  n\geq 0\bigr)  g\bigl((k,\ell)\to(k,\ell)\bigr) \\ \leq g\bigl((k,\ell)\to(k,\ell)\bigr), 
\end{multline*} 
and 
\begin{equation*}
g\bigl((k,\ell)\to(k,\ell)\bigr) = \frac{1}{1-\P_{(k,\ell)}\bigl(Z(n) = (k,\ell) \text{ for some } n\geq 1\bigr) }.
\end{equation*} 
It is therefore sufficient to show that for any $k\in\N$,
\be\label{sec4_lem1_eq1} 
\sup_{\ell\in\N} \P_{(k,\ell)}\bigl(Z(n) = (k,\ell)  \text{ for some } n\geq 1\bigr) <1.
\ee 
To prove the inequality \eqref{sec4_lem1_eq1}, we consider the stopping times 
\begin{align}
   \tau(k,\ell) &= \inf\{n>0: Z(n) = (k,\ell)\},\label{eq:def_tau(k,l)}\\
   \tau &= \inf\{n>0: Y(n) < k_0\},\label{eq:def_tau} \\
   \tau^{\textnormal{loc}}(k,\ell) &= \inf\{n>0: Z_1(n) = (k,\ell)\},\label{eq:def_tau_1(k,l)}\\
   \tau_1^{\textnormal{loc}} &= \inf\{n>0: Y_1(n) < k_0\},\label{eq:def_tau_1}
\end{align}
see also Section~\ref{sec:hitting_times}. Then, for any $(k,\ell)\in\N^2$, 
\begin{align*} 
 \P_{(k,\ell)}\bigl(Z(n) = (k,\ell)  & \text{ for some } n\geq 1\bigr) = \P_{(k,\ell)}\bigl( \tau(k,\ell) <  \infty\bigr) \\
 &= \P_{(k,\ell)}\bigl( \tau \leq \tau(k,\ell) <  \infty\bigr) + \P_{(k,\ell)}\bigl( \tau(k,\ell) <  \infty  \text{ and }  \tau(k,\ell) < \tau\bigr) \\
 &\leq \P_{(k,\ell)}\bigl( \tau <  \infty\bigr) + \P_{(k,\ell)}\bigl( \tau(k,\ell) <  \infty  \text{ and } \tau(k,\ell) < \tau\bigr),
\end{align*} 
where, according to the definition of the local process $Z_1$, 
\begin{equation*}
     \P_{(k,\ell)}\bigl( \tau <  \infty\bigr)  = \P_{(k,\ell)}\bigl( \tau_1^{\textnormal{loc}} <  \infty\bigr) 
\end{equation*} 
and 
\begin{align*}
\P_{(k,\ell)}\bigl( \tau(k,\ell) < \infty   \text{ and }  \tau(k,\ell) < \tau\bigr)  &= \P_{(k,\ell)}\bigl( \tau^{\textnormal{loc}}(k,\ell) <  \infty \text{ and } \tau^{\textnormal{loc}}(k,\ell) < \tau_1^{\textnormal{loc}}\bigr) \\
&\leq \P_{(k,\ell)}\bigl( \tau^{\textnormal{loc}}(k,\ell) <  \infty\bigr) =   \P_{(k,0)}\bigl( \tau^{\textnormal{loc}}(k,0) <  \infty\bigr). 
\end{align*} 
Hence, for any $(k,\ell)\in\N^2$,  
\be\label{sec4_lem1_eq2}
 \P_{(k,\ell)}\bigl(Z(n) = (k,\ell)   \text{ for some } n\geq 1\bigr) \leq  \P_{(k,\ell)}\bigl( \tau_1^{\textnormal{loc}} <  \infty\bigr)   + \P_{(k,0)}\bigl( \tau^{\textnormal{loc}}(k,0) <  \infty\bigr). 
\ee
Recall now that by \eqref{fluid_limit_eq}, for any $(k,\ell)\in\N^2$, $\P_{(k,\ell)}$-a.s.,  $\lim_{n\to\infty} Y_1(n) = \infty$. For any $k\in\N$, the quantity
$\inf_{n\in\mathbb N} Y_1(n)$ is therefore $\P_{(k,0)}$-a.s.\ finite, and 
\be\label{sec4_lem1_eq3} 
\P_{(k,0)} \bigl( \tau^{\textnormal{loc}}(k,0) <  \infty\bigr) < 1. 
\ee
Indeed, the Markov chain  $Z_1$ is transient, so its first return time to $(k,0)$ is infinite with positive probability. 
It follows from the ($\P_{(k,0)}$-a.s.)\ finiteness
of $\inf_{n\in\mathbb N} Y_1(n)$ that
\begin{equation*}
   \lim_{\ell\to \infty}   \P_{(k,\ell)}\bigl( \tau_1^{\textnormal{loc}} <  \infty\bigr)  =     \lim_{\ell\to\infty} \P_{(k,0)}\bigl( Y_1(n) < k_0 - \ell \text{ for some } n\geq1\bigr)  =0 ,
\end{equation*}
and using \eqref{sec4_lem1_eq2}, we conclude that for any $k\in\N$, 
\begin{equation*}
\limsup_{\ell\to\infty}  \P_{(k,\ell)}\bigl(Z(n) = (k,\ell) \text{ for some } n\geq 1\bigr)  \leq \P_{(k,0)}\bigl( \tau^{\textnormal{loc}}(k,0) <  \infty\bigr). 
\end{equation*} 
When combined with \eqref{sec4_lem1_eq3}, the last relation proves that for any $k\in\N$ and $N\in\N$ large enough,
\be\label{sec4_lem1_eq4} 
\sup_{\ell\geq N} \P_{(k,\ell)}\bigl(Z(n) = (k,\ell)  \text{ for some } n\geq 1\bigr) <1. 
\ee
The Markov chain $\{Z(n)\}$ being irreducible (Assumption \ref{as3}), the last relation proves  that   $\{Z(n)\}$  is transient, and consequently for all $\ell\in\N$
\be\label{sec4_lem1_eq5} 
\P_{(k,\ell)}\bigl(Z(n) = (k,\ell)  \text{ for some } n\geq 1\bigr) <1. 
\ee
When combined,
\eqref{sec4_lem1_eq4} and \eqref{sec4_lem1_eq5} imply \eqref{sec4_lem1_eq1}, and thus Proposition~\ref{sec4_lem1}. 
\end{proof}

\section{Useful results for Markov-additive processes and their consequences for the Markov chain $Z$} 

In this section, we obtain the results needed for the proof of Theorem~\ref{thm:main-2}.  

\subsection{General statements for Markov-additive processes} Let ${\cal E}$ be a countable set. Recall that a Markov chain $\{({\cal M}(n), {\cal A}(n))\}_{n\geq0}$ on the (countable) set ${\cal E}\times \Z$  is called Markov-additive if for any $(i,j),(i',j')\in {\cal E}\times \Z$, 
\begin{equation*}
 \P_{(i,j)}\bigl(({\cal M}(1),{\cal A}(1)) = (i',j')\bigr) = \P_{(i,0)}\bigl(({\cal M}(1),{\cal A}(1)) = (i',j'-j)\bigr).
\end{equation*} 
The first component $\{{\cal M}(n)\}$ is the Markovian part of the Markov-additive chain $\{({\cal M}(n), {\cal A}(n))\}$, and $\{{\cal A}(n)\}$ is its additive part.

We first recall some properties of the Markov-additive processes. 
The Markovian part $\{{\cal M}(n)\}$ is a Markov chain on ${\cal E}$ with transition probabilities
\begin{equation*}
   \P\bigl({\cal M}(n+1) = i'  \vert  {\cal M}(n) = i\bigr) = \P_{(i,0)}\bigl({\cal M}(1)=i'\bigr), \quad \forall i,i'\in{\cal E}. 
\end{equation*}
If the Markovian part $\{{\cal M}(n)\}$ is irreducible and recurrent, then letting for $i\in{\cal E}$
\begin{equation*}
   ({\cal M}(0),{\cal A}(0)) = (i,0)
\end{equation*} 
and then
\begin{equation}
\label{eq:def_t_n}
   \left\{\begin{array}{rcl}
   t(i)=t_1(i) &=& \inf\{p > 0 : {\cal M}(p) = i\},\medskip\\
   t_{n+1}(i) &=& \inf\{p > t_n(i) : {\cal M}(p) = i\}, \quad \forall n\in\N^*, 
   \end{array}\right.
\end{equation}
one gets a homogeneous random walk $\{S_i(n)\} = \{{\cal A}(t_n(i))\}$ on $\Z$. If moreover the Markov chain  $\{{\cal M}(n)\}$ is positive recurrent with stationary distribution $\{\pi(i)\}_{i\in{\cal E}}$, and if the series 
\be\label{MAP_eq2}
V = \sum_{i'\in{\cal E}} \E_{(i',0)}\bigl({\cal A}(1)\bigr) \pi(i') 
\ee
absolutely converges, then the steps of the random walk $\{S_i(n)\}$ are integrable and  the following generalization of Wald's identity holds: 
\be\label{MAP_eq3}
\E_{(i,0)}\bigl({\cal A}(t_1(i))\bigr) = \frac{V}{\pi(i)},
\ee
see Theorem~11  of Prabhu, Tang and Zhu~\cite{Prabhu-Tang-Zhu}. Moreover, in this case, by \cite[Thm.~12]{Prabhu-Tang-Zhu}, for any $(i,j)\in{\cal E} \times\Z$, $\P_{(i,j)}$-a.s.,
\begin{equation*}
   \frac{{\cal A}(n)}{n} \to  V \quad \text{as} \quad  n\to\infty, 
\end{equation*}
and if $V > 0$, then by Theorem 2.1 of Alsmeyer~\cite{Al-94}, for any $k\in{\cal E}$, 
\be\label{MAP_eq4}
\lim_{\ell\to \infty} \E_{(i,0)}\left( \sum_{n=0}^\infty \1_{\{{\cal M}(n) = k, {\cal A}(n) = \ell\}}\right) = \frac{\pi(k)}{V}. 
\ee
Consider moreover some $k_0\in\Z$ and let 
\begin{equation}
\label{eq:def_cal_T}
   {\mathcal  T} = \inf\{n \geq 0: {\cal A}(n) < k_0\}.  
\end{equation}

Using the Markov property, \eqref{MAP_eq4} and the dominated convergence theorem, one easily obtains:
\begin{prop}
\label{MAP_prop1a}
If the Markov chain $\{{\cal M}(n)\}$ is positive recurrent with stationary distribution $\{\pi(i)\}_{i\in{\cal E}}$, if the series \eqref{MAP_eq2} absolutely converges and if $V > 0$, then for any $(i,j)\in {\cal E}\times\Z$ and $k\in{\cal E}$, 
\be\label{MAP_eq5a}
\lim_{\ell\to \infty} \E_{(i,j)}\left( \sum_{n=0}^{{\mathcal  T} - 1} \1_{\{{\cal M}(n) = k, {\cal A}(n) = \ell\}}\right) = \frac{\pi(k)\P_{(i,j)}({\mathcal  T} =  \infty)}{V}. 
\ee
\end{prop} 
\begin{proof}
Indeed, using the Markov property applied with the stopping time ${\mathcal T}$, for any $(i,j), (k,\ell)\in {\cal E}\times\Z$, one gets 
\begin{multline}\label{MAP_eq5b}
\E_{(i,0)}\left( \sum_{n=0}^\infty \1_{\{{\cal M}(n) = k, {\cal A}(n) = \ell\}}\right)  =  \E_{(i,j)}\left( \sum_{n=0}^{{\mathcal  T} - 1} \1_{\{{\cal M}(n) = k, {\cal A}(n) = \ell\}}\right)  \\ + 
\sum_{\substack{(i',j')\in{\cal E}\times\Z\\j < k_0}} \P_{(i,j)}\bigl({\cal M}({\mathcal T})= i', {\cal A}({\mathcal T}) = j'\bigr) \,\E_{(i',j')}\!\!\left( \sum_{n=0}^\infty \1_{\{{\cal M}(n) = k, {\cal A}(n) = \ell\}}\right).
\end{multline} 
Since for any $(i,j)\in{\cal E}\times\Z$,
\begin{multline*}
\E_{(i,j)}\left( \sum_{n=0}^\infty \1_{\{{\cal M}(n) = k, {\cal A}(n) = \ell\}}\right) \\ \leq \E_{(k,\ell)}\left( \sum_{n=0}^\infty \1_{\{{\cal M}(n) = k, {\cal A}(n) = \ell\}}\right) = \E_{(k,0)}\left( \sum_{n=0}^\infty \1_{\{{\cal M}(n) = k, {\cal A}(n) = 0\}}\right), 
\end{multline*} 
by the dominated convergence theorem and using \eqref{MAP_eq4}, it follows that 
\begin{multline*}
 \lim_{\ell\to \infty}  \sum_{\substack{(i',j')\in{\cal E}\times\Z\\ j < k_0}} \P_{(i,j)}\bigl({\cal M}({\mathcal T})= i', {\cal A}({\mathcal T}) = j'\bigr) \E_{(i',j')}\left( \sum_{n=0}^\infty \1_{\{{\cal M}(n) = k, {\cal A}(n) = \ell\}}\right) \\
 =  \sum_{\substack{(i',j')\in{\cal E}\times\Z\\j < k_0}} \P_{(i,j)}\bigl({\cal M}({\mathcal T})= i', {\cal A}({\mathcal T}) = j'\bigr) \frac{\pi(k)}{V}. 
\end{multline*} 
Since clearly,
\begin{equation}
\label{MAP_eq5c} 
   \sum_{\substack{(i',j')\in{\cal E}\times\Z\\ j < k_0}} \P_{(i,j)}\bigl({\cal M}({\mathcal T})= i', {\cal A}({\mathcal T}) = j'\bigr) = \P_{(i,j)}\bigl({\mathcal  T} <  \infty\bigr),
\end{equation} 
the last relation combined with \eqref{MAP_eq5b}  proves that 
\begin{equation*}
\lim_{\ell\to \infty} \E_{(i,j)}\left( \sum_{n=0}^{{\mathcal  T} - 1} \1_{\{{\cal M}(n) = k, {\cal A}(n) = \ell\}}\right)  = \frac{\pi(k)}{V}  \left(1 - \P_{(i,j)}\bigl({\mathcal  T} <  \infty\bigr)  \right) = \frac{\pi(k)\P_{(i,j)}({\mathcal  T} =  \infty)}{V} .\qedhere
\end{equation*}
\end{proof} 
This result will be used to prove Theorem~\ref{thm:main-1}. To prove Theorem~\ref{thm:main-2}, we need an exponential rate of convergence in  \eqref{MAP_eq5a}.

We will now assume the five following hypotheses: 
\setcounter{Assumption}{6}
\begin{Assumption}
\label{MAPas3}
The Markov chain $\{{\cal M}(n)\}$ is aperiodic and irreducible on ${\cal E}$.
\end{Assumption} 
\begin{Assumption}
\label{MAPas4}
There exist a positive function $f: {\cal E} \to [1,\infty)$ and a finite subset ${\cal E}_0\subset{\cal E}$ such that 
\begin{enumerate}
\item for any $i\in{\cal E}_0$, 
\begin{equation*}
   \E_i\bigl(f({\cal M}(1))\bigr) < \infty;
\end{equation*}
\item there is $\delta > 0$ such that for any $i\in{\cal E}\setminus{\cal E}_0$, 
\begin{equation*}
   \E_i\bigl(f({\cal M}(1))\bigr) \leq \exp(-\delta) f(i). 
\end{equation*}
\end{enumerate} 
\end{Assumption} 
\begin{Assumption}
\label{MAPas5}
The function 
\begin{equation*}
   \alpha \mapsto \sup_{i\in{\cal E}} \E_{(i,0)}\bigl(\exp(\alpha {\cal A}(1))\bigr)  
\end{equation*}
is finite in some neighborhood of $\alpha=0$. 
\end{Assumption}
Observe that thanks to Assumptions~\ref{MAPas3} and \ref{MAPas4}, the Markov chain $\{{\cal M}(n)\}$ is geometrically ergodic and therefore positive recurrent (see Chapter~15 in the book of Meyn and Tweedie~\cite{MeTw-93}). As above, we will denote by 
 $\{\pi(i)\}_{i\in{\cal E}}$ its stationary distribution, and we define the quantity $V$ by \eqref{MAP_eq2}. To see that under the above assumptions, the series \eqref{MAP_eq2} absolutely converges, it is sufficient to notice that  for $\alpha > 0$ small enough, using Assumption~\ref{MAPas5}, 
 \begin{equation*}
    \sup_{i\in{\cal E}} \E_{(i,0)}\bigl(\vert{\cal A}(1)\vert\bigr)   \leq \frac{1}{\alpha} \sup_{i\in{\cal E}} \max\left\{ \E_{(i,0)}\bigl(\exp(\alpha {\cal A}(1))\bigr), \E_{(i,0)}\bigl(\exp(- \alpha {\cal A}(1))\bigr)\right\} <  \infty.  
\end{equation*}

 \begin{Assumption}
 \label{MAPas6}
 $V > 0$.
 \end{Assumption}

\begin{Assumption}
\label{MAPas7}
There exist $\alpha > 0$  and $\eps > 0$ such that for any $(i,j)\in{\cal E}\times\Z$ with $i\not\in{\cal E}_0$,  
\begin{equation*}
   \E_{(i,j)}\bigl(\exp(\alpha ({\cal A}(1)-j)) \bigr) < \exp(-\eps).
\end{equation*}
\end{Assumption} 
\noindent
Without any restriction of generality, we will assume that the set ${\mathcal E}_0$ in Assumption~\ref{MAPas7} is the same as in Assumption~\ref{MAPas4}.

In order to prove that the convergence in \eqref{MAP_eq5a} is exponential, we first prove that the convergence in  \eqref{MAP_eq4} is exponential and uniform with respect to the initial position $(i,0)$.
Assumption~\ref{MAPas5} is the classical Cramer condition, it is needed to get an exponential rate of convergence in \eqref{MAP_eq4}. Assumption~\ref{MAPas7} is needed for the exponential convergence in \eqref{MAP_eq4} to be uniform with respect to the initial position $(i,0)$.

We consider $t(k)$ as in \eqref{eq:def_t_n}, and begin our analysis with the following preliminary results. 

\begin{lem}
\label{MAP_lem1}
Under the hypotheses \ref{MAPas3}--\ref{MAPas5}, for any $k\in{\cal E}$, there exist two positive constants $c_k,\theta_k$ such that for any $i\in{\cal E}$ and $n\in\N$, 
\begin{equation*}
   \P_{(i,0)} \bigl(t(k) > n\bigr) \leq c_k f(i) \exp(-\theta_k n). 
\end{equation*}
\end{lem}

\begin{proof}
This lemma is a consequence of Theorem~15.2.6 in the book of Meyn and Tweedie~\cite{MeTw-93}. 
\end{proof}

\begin{lem}
\label{MAP_lem2}
Under the hypotheses \ref{MAPas3}--\ref{MAPas5}, for any $k\in{\cal E}$, there exist two positive constants $c_k',\theta_k'$ such that for any $i \in{\cal E}$ and $n\in\N$, 
\be\label{estimates_lem_2pp_eq1}
   \P_{(i,0)} \bigl( \vert {\cal A}(t(k))\vert \geq n \bigr) \leq c_k' f(i) \exp(-\theta_k' n).
\ee
\end{lem}

\begin{proof}
For any $i\in\N$, $n\in\N$ and $\varkappa > 0$, 
\begin{align*}
   \P_{(i,0)}\bigl(\vert {\cal A}(t(k))\vert \geq n\bigr) &\leq \P_{(i,0)}\bigl(\vert {\cal A}(t(k)) \vert  \geq n,  t(k) \leq \varkappa n\bigr) + \P_{(i,0)}\bigl(t(k) >  \varkappa n\bigr) \\
&\leq \sum_{1 \leq s \leq \varkappa n} \P_{(i,0)}\bigl( \vert {\cal A}(s) \vert  \geq n\bigr) + \P_{(i,0)}\bigl(t(k) >  \varkappa n\bigr).
\end{align*} 
Hence, for any $\delta > 0$, using the Markov inequality, one gets 
\begin{equation*}
   \P_{(i,0)}\bigl(\vert {\cal A}(t(k))\vert  \geq n\bigr) \leq  \exp(-\delta n) \sum_{1 \leq s \leq \varkappa n} \E_{(i,0)}\bigl(\exp(\delta  \vert {\cal A}(s) \vert )\bigr) + \P_{(i,0)}\bigl(t(k) >  \varkappa n\bigr).
\end{equation*}
On the one hand, using Assumption~\ref{MAPas3}, for $\delta > 0$ small enough, 
\begin{equation*}
   \E_{(i,0)}\bigl(\exp(\delta  \vert {\cal A}(s) \vert )\bigr) \leq C^s,
\end{equation*} 
with some $C > 1$ not depending on $i\in\N$ and $s\in\N$. On the other hand, by  Lemma~\ref{MAP_lem1}, for any  $k\in\N$, there exist $c_k> 0$ and $\theta_k>0$ such that for any $i\in\N$, $n\in\N$ and $\varkappa>0$, 
\begin{equation*}
   \P_{(i,0)}\bigl(t(k) >  \varkappa n\bigr)  \leq c_k f(i) \exp(\theta_k \varkappa n).
\end{equation*}
When combined, these inequalities imply that 
\begin{equation*}
   \P_{(i,0)}\bigl(\vert {\cal A}(t(k)) \vert  \geq n\bigr) \leq \exp(-\delta n) \frac{C^{\varkappa n}}{C-1}  + c_k f(i) \exp(- \theta_k \varkappa n).
\end{equation*}
Since $f(i)\geq 1$ by Assumption~\ref{MAPas4},  letting $\varkappa = \delta /(2\log C)$, one gets \eqref{estimates_lem_2pp_eq1} with some $c_k'> 0$ and $\theta_k' = \min\{\delta, \theta_k\varkappa\}/2$.
\end{proof}

We are now ready to prove an exponential rate of convergence in \eqref{MAP_eq4}. This is a subject of the following lemma. 
\begin{lem}
\label{MAP_lem4}
Under the hypotheses \ref{MAPas3}--\ref{MAPas6}, for any $k\in{\cal E}$, there exists $\theta''_k > 0$ such that for any $i\in{\cal E}$ and $\ell \in\N$, 
\be
\label{prop2_2_eq1} 
     \left\vert  \E_{(i,0)}\left( \sum_{n=0}^{\infty} \1_{\{{\cal M}(n) = k, {\cal A}(n) = \ell\}}\right) -  \frac{\pi_1(k)}{V_1} \right\vert  \leq C_{i,k} \exp\bigl( - \theta''_k \ell\bigr),
\ee
with some $C_{i,k} > 0$ not depending on $\ell\in\N$. 
\end{lem}

\begin{proof}
To prove this lemma, we investigate the generating functions (for all $i,k\in{\cal E}$)
\begin{equation}
\label{eq:def_Green_MAP}
     {\cal G}_{(i,0)\to (k,\cdot)}(y) = \sum_{\ell= -\infty}^\infty  \E_{(i,0)}\left( \sum_{n=0}^{\infty} \1_{\{{\cal M}(n) = k, {\cal A}(n) = \ell\}}\right)  y^\ell
\end{equation}
and 
\begin{equation}\label{def_func_Phi}
   \Phi_{(i,0)\to (k,\cdot)}(y)  =   \sum_{\ell= -\infty}^\infty \P_{(i,0)}\bigl({\cal A}(t(k)) = \ell\bigr) y^\ell. 
\end{equation}
Remark that  by Lemma~\ref{MAP_lem2}, for any $k\in{\cal E}$, the Laurent series \eqref{def_func_Phi} converges in the annulus
\begin{equation*}
   {\cal C}_{\theta_k'} = \{ y\in\C: e^{-\theta_k'} < \vert y\vert < e^{\theta_k'}\},
\end{equation*}
for some $\theta_k' > 0$ depending on $k$. Moreover, using the Markov property and classical arguments from renewal theory, for any $k\in{\cal E}$ and $y\in{\cal C}_{\theta_k'}$, one gets 
\begin{equation*}
   {\cal G}_{(i,0)\to (k,\cdot)}(y)  = \delta_{i,k} + \Phi_{(i,0)\to (k,\cdot)}(y) {\cal G}_{(k,0)\to (k,\cdot)}(y)
\end{equation*}
with $\delta_{i,k}$ denoting the Kronecker symbol
\begin{equation*}
   \delta_{i,k} = \begin{cases} 1 &\text{if $i=k$},\\
0 &\text{otherwise.} 
\end{cases} 
\end{equation*}
Hence, for all those $y\in {\cal C}_{\theta_k'}$ for which 
$
   \Phi_{(k,0)\to (k,\cdot)}(y) \not= 1
$,
one gets 
\begin{equation*}
    {\cal G}_{(k,0)\to (k,\cdot)}(y) =  \frac{1}{ 1 - \Phi_{(k,0)\to (k,\cdot)}(y)}  
\end{equation*}
and for any $i\not=k$, 
\begin{equation*}
   {\cal G}_{(i,0)\to (k,\cdot)}(y) =   \frac{\Phi_{(i,0)\to (k,\cdot)}(y)}{1-  \Phi_{(k,0)\to (k,\cdot)}(y)}. 
 \end{equation*}
Remark furthermore that 
$
   \Phi_{(k,j)\to (k,\cdot)}(1) = 1
$
and using \eqref{MAP_eq3}, 
\be\label{prop2_1_eq2}
\frac{d}{dy} \Phi_{(k,0)\to (k,\cdot)}(1) = \E_{(k,0)}\bigl({\cal A}(t_1(k))\bigr) =  \frac{V}{\pi(k)} > 0.
\ee
Recall moreover that the Markov-additive process $\{({\cal M}(n),{\cal A}(n))\}$ is irreducible in its state space ${\cal E}\times\Z$, and hence, for any $k\in\N$, the positive matrix $\bigl(\P_{(k,j)}\bigl({\cal A}(t_1(k)) = \ell\bigr)\bigr)_{j,\ell\in\Z}$ is also irreducible. This proves that the sequence of integers $\ell\in\Z$ for which $\P_{(k,j)}\bigl({\cal A}(t_1(k)) = \ell\bigr) > 0$ is aperiodic and consequently, for any point $y$ in the unit circle $\vert y\vert = 1$ such that $y\not= 1$,
\begin{equation*}
   \vert \Phi_{(k,0)\to (k,\cdot)}(y) \vert < \Phi_{(k,0)\to (k,\cdot)}(1) = 1.
\end{equation*}
 Hence, for some $\theta''_k  > 0$, the functions \eqref{eq:def_Green_MAP} are meromorphic in a neighborhood of the annulus ${\cal C}_{\theta''_k}$  and has there a unique, simple pole at the point $y=1$, with residue $\pi(k)/V$. This proves that for any $i\in{\cal E}$ and $\ell \in\N$,  \eqref{prop2_2_eq1}  holds with some $C_{i,k} > 0$ not depending on $\ell\in\N$. 
\end{proof} 

We just proved an exponential rate of convergence in \eqref{MAP_eq4}, and would like to show now that this convergence is uniform with respect to the initial state $(i,0)$. 
To that purpose, for a given $k\in{\cal E}$, we define a stopping time $T(k)$ by letting 
\begin{equation}
\label{eq:def_T(k)}
   T(k) = \inf\bigl\{n\geq 1: {\cal M}(n)\in{\cal E}_0\cup\{k\}\bigr\} . 
\end{equation}
\begin{lem}
\label{MAP_lem5}
Under Assumption~\ref{MAPas7}, for any $k\in{\cal E}$, $i\in{\cal E}\setminus{\cal E}_0$ and $\ell\in\Z$, 
\begin{equation*}
   \P_{(i,0)}\bigl( {\cal A}(T(k)) = \ell \bigr) \leq \frac{\exp(-\alpha \ell )}{1-\exp(-\eps)}. 
\end{equation*}
\end{lem}
\begin{proof} For any $n\in\N$, using Assumption~\ref{MAPas7} one gets 
\begin{align*}
\P_{(i,0)}\bigl( {\cal A}(T(k)) = \ell, T(k) = n \bigr)  &\leq \P_{(i,0)}\bigl( {\cal A}(n) = \ell, T(k) \geq n \bigr)\\
&\leq \exp(-\alpha \ell)  \E_{(i,0)}\bigl( \exp(\alpha{\cal A}(n));  T(k) \geq n \bigr)\\
&\leq \exp(-\alpha \ell -\eps n), 
\end{align*} 
and consequently, 
\begin{equation*}
     \P_{(i,j)}\bigl({\cal A}(T(k)) = \ell\bigr) 
     =  \sum_{n =1}^\infty \P_{(i,j)}\bigl({\cal A}(T(k)) = \ell, T(k) = n\bigr)
     \leq   \frac{\exp(-\alpha \ell )}{1-\exp(-\eps)}.\qedhere
\end{equation*} 
\end{proof} 
Now we are ready to prove that the exponential convergence in \eqref{MAP_eq4} is uniform with respect to the initial state $(i,0)$. 

\begin{lem}
\label{MAP_lem6}
Under the hypotheses \ref{MAPas3}--\ref{MAPas7}, for any $k\in{\cal E}$, there exist positive constants $C_k,\delta_k$ such that for any $i\in{\cal E}$ and $\ell\in\Z$, 
\be\label{MAP_eq6}
\left|  \E_{(i,j)}\left( \sum_{n=0}^{\infty} \1_{\{{\cal M}(n) = k, {\cal A}(n) = \ell\}}\right) -   \frac{\pi(k)}{V} \right|  \leq C_k \exp(-\delta_k (\ell -j)).
\ee
\end{lem} 

\begin{proof}
It is sufficient to prove Lemma~\ref{MAP_lem6} for $j=0$, since clearly
\begin{equation*}
  \E_{(i,j)}\left( \sum_{n=0}^{\infty} \1_{\{{\cal M}(n) = k, {\cal A}(n) = \ell\}}\right)  =   \E_{(i,0)}\left( \sum_{n=0}^{\infty} \1_{\{{\cal M}(n) = k, {\cal A}(n) = \ell-j\}}\right).
\end{equation*}
Let $i\in{\cal E}$ and $(k,\ell)\in {\cal E}\times\Z$ with $i \notin{\cal E}_0$. It comes from the Markov property that
\begin{align*}
     \E_{(i,0)}&\left( \sum_{n=0}^{\infty} \1_{\{{\cal M}(n) = k, {\cal A}(n) = \ell\}}\right)  \\
     &= \sum_{\substack{i'\in{\cal E}_0\cup\{k\}\\ j'\in\Z} }
     \P_{(i,0)}\bigl({\cal M}(T(k)) = i', {\cal A}(T(k))= j' \bigr)  \E_{(i',j')}\left( \sum_{n=0}^{\infty} \1_{\{{\cal M}(n) = k, {\cal A}(n) = \ell\}}\right) \\
     &= \sum_{\substack{i'\in{\cal E}_0\cup\{k\}\\ j'\in\Z} }
     \P_{(i,0)}\bigl({\cal M}(T(k)) = i', {\cal A}(T(k))= j' \bigr)  \E_{(i',0)}\left( \sum_{n=0}^{\infty} \1_{\{{\cal M}(n) = k, {\cal A}(n) = \ell-j'\}}\right).
\end{align*} 
Moreover, since the Markov chain $\{{\cal M}(n)\}$ is recurrent, 
\begin{equation*}
      \sum_{\substack{i'\in{\cal E}_0\cup\{k\}\\ j'\in\Z} }    \P_{(i,0)}\bigl({\cal M}(T(k)) = i', {\cal A}(T(k))= j' \bigr) = \P_{(i,0)}\bigl(T(k)< \infty\bigr) = 1,
\end{equation*}
and consequently
\begin{multline*} 
\left\vert  \E_{(i,0)}\left( \sum_{n=0}^{\infty} \1_{\{{\cal M}(n) = k, {\cal A}(n) = \ell\}}\right)  -  \frac{\pi(k)}{V} \right\vert  \leq  \sum_{\substack{i'\in{\cal E}_0\cup\{k\}\\ j'\in\Z} } \P_{(i,0)}\bigl({\cal M}(T(k)) = i', {\cal A}(T(k))= j' \bigr) \\ 
\times \left\vert \E_{(i',0)}\left( \sum_{n=0}^{\infty} \1_{\{{\cal M}(n) = k, {\cal A}(n) = \ell - j'\}}\right)  -  \frac{\pi(k)}{V} \right\vert .  
\end{multline*} 
In order to derive \eqref{MAP_eq6}, we now split the right-hand side of the above inequality into two parts $\Sigma_1$ and $\Sigma_2$: in $\Sigma_1$ we consider the summation over $i'\in{\cal E}_0\cup\{k\}$ and $j' \geq \ell/2$, and in $\Sigma_2$ we consider the summation over $i'\in{\cal E}_0\cup\{k\}$ and $j' < \ell/2$.

To estimate $\Sigma_1$, we combine the straightforward relations  
\begin{align*} 
\left\vert \E_{(i',0)}\left( \sum_{n=0}^{\infty} \1_{\{{\cal M}(n) = k, {\cal A}(n) = \ell - j'\}}\right)  -  \frac{\pi(k)}{V} \right\vert   &\leq  \E_{(i',0)}\left( \sum_{n=0}^{\infty} \1_{\{{\cal M}(n) = k, {\cal A}(n) = \ell - j'\}}\right)  + \frac{1}{V} \\
&\hspace{-6cm} \leq  \E_{(k,\ell-j')}\left( \sum_{n=0}^{\infty} \1_{\{{\cal M}(n) = k, {\cal A}(n) = \ell - j'\}}\right)  + \frac{1}{V} = \E_{(k,0)}\left( \sum_{n=0}^{\infty} \1_{\{{\cal M}(n) = k, {\cal A}(n) = 0\}}\right) + \frac{1}{V} 
\end{align*}
with Lemma~\ref{MAP_lem5}:
\begin{align*}
     \Sigma_1  &\leq \sum_{\substack{i'\in{\cal E}_0\cup\{k\}\\j' \geq \ell/2} }\P_{(i,0)}\bigl({\cal M}(T(k)) = i', {\cal A}(T(k))= j' \bigr)  \left(\E_{(k,0)}\left( \sum_{n=0}^{\infty} \1_{\{{\cal M}(n) = k, {\cal A}(n) = 0\}}\right)  + \frac{1}{V}\right)  \\
     &\leq   \sum_{j'\geq \ell/2}  \P_{(i,0)}\bigl({\cal A}(T(k))= j' \bigr) \left(\E_{(k,0)}\left( \sum_{n=0}^{\infty} \1_{\{{\cal M}(n) = k, {\cal A}(n) = 0\}}\right)  + \frac{1}{V}\right) \\
     &\leq  \frac{1}{1-\exp(-\eps)}  \left(\E_{(k,0)}\left( \sum_{n=0}^{\infty} \1_{\{{\cal M}(n) = k, {\cal A}(n) = 0\}}\right)  + \frac{1}{V}\right) \sum_{j'\geq \ell/2} \exp(-\alpha j' ) \\
     &\leq  \frac{1}{1-\exp(-\eps)}  \left(\E_{(k,0)}\left( \sum_{n=0}^{\infty} \1_{\{{\cal M}(n) = k, {\cal A}(n) = 0\}}\right)  + \frac{1}{V}\right) \frac{ \exp(-\alpha \ell/2 )}{1-\exp(-\alpha)}  .
\end{align*}  
In order to estimate $\Sigma_2$, we use Lemma~\ref{MAP_lem4}: 
\begin{align*}
\Sigma_2 &\leq  \sum_{\substack{i'\in{\cal E}_0\cup\{k\}\\ j'  < \ell/2} } \P_{(i,0)}\bigl({\cal M}(T(k)) = i', {\cal A}(T(k))= j' \bigr)  C_{i',k} \exp\bigl( - \theta''_k (\ell-j')\bigr) \\
&\leq  \exp\bigl( - \theta''_k \ell/2\bigr)  \sum_{\substack{i'\in{\cal E}_0\cup\{k\}\\j'  < \ell/2} } \P_{(i,0)}\bigl({\cal M}(T(k)) = i', {\cal A}(T(k))= j' \bigr)  C_{i',k}  \\
&\leq  \exp\bigl( - \theta''_k \ell/2\bigr)  \max_{i'\in{\cal E}_0\cup\{ k\}} C_{i',k}.
\end{align*} 
When combined, these estimates prove \eqref{MAP_eq6} with $\delta_k = \min\{\alpha,\theta''_k\}/2$ and 
\begin{equation*}
     C_k = \frac{1}{1-\exp(-\eps)(1-\exp(-\alpha))}  \left(\E_{(k,0)}\left( \sum_{n=0}^{\infty} \1_{\{{\cal M}(n) = k, {\cal A}(n) = 0\}}\right)  + \frac{1}{V}\right)  +\max_{i'\in{\cal E}_0\cup\{ k\}} C_{i',k}.\qedhere
\end{equation*}
\end{proof}  

We are now ready to prove that the convergence in \eqref{MAP_eq5a} holds at an exponential rate. 

\begin{prop}
\label{MAP_prop1}
Under the hypotheses \ref{MAPas3}--\ref{MAPas7}, for any $k\in{\cal E}$, there exist positive constants $\widetilde{C}_k,\widetilde\delta_k$ such that for any $i\in{\cal E}$, $j\geq k_0$ and $\ell\geq k_0$, 
\be\label{MAP_eq7}
\left|  \E_{(i,j)}\left( \sum_{n=0}^{{\mathcal  T}-1} \1_{\{{\cal M}(n) = k, {\cal A}(n) = \ell\}}\right) -    \frac{\pi(k)\P_{(i,j)}({\mathcal  T} =  \infty)}{V} \right| \leq \widetilde{C}_k \exp(-\widetilde\delta_k (\ell - j)).
\ee
\end{prop} 
\begin{proof}
To prove this proposition, we combine Lemma~\ref{MAP_lem6}  with the  identities \eqref{MAP_eq5b} and \eqref{MAP_eq5c}. Using \eqref{MAP_eq5b} and \eqref{MAP_eq5c}, we get
\begin{multline*}
   \left\vert   \E_{(i,j)}\left( \sum_{n=0}^{{\mathcal  T}-1} \1_{\{{\cal M}(n) = k, {\cal A}(n) = \ell\}}\right)  -  \frac{\pi(k)}{V}\P_{(i,j)}\bigl({\mathcal  T} = \infty\bigr) \right\vert  \leq  \left\vert   \E_{(i,j)}\left( \sum_{n=0}^{\infty} \1_{\{{\cal M}(n) = k, {\cal A}(n) = \ell\}}\right)  -  \frac{\pi(k)}{V} \right\vert   \\
+\sum_{\substack{i'\in{\cal E}\\ j'\in\Z, j' < k_0} }
     \P_{(i,j)}\bigl({\cal M}({\mathcal  T}) = i', {\cal A}({\mathcal  T})= j' \bigr) \left| \E_{(i',j')}\left( \sum_{n=0}^{\infty} \1_{\{{\cal M}(n) = k, {\cal A}(n) = \ell\}}\right) -  \frac{\pi(k)}{V}  \right|
\end{multline*} 
and using then Lemma~\ref{MAP_lem6}, we conclude that for any $k\in\N$, there exist $\delta_k > 0$ and $C_k > 0$ such that for any $i\in{\cal E}$, $j\geq k_0$ and $\ell\geq k_0$, 
\begin{align*}
&\hspace{-0.5cm}\left\vert   \E_{(i,j)}\left( \sum_{n=0}^{{\mathcal  T}-1} \1_{\{{\cal M}(n) = k, {\cal A}(n) = \ell\}}\right)  -  \frac{\pi(k)}{V}\P_{(i,j)}\bigl({\mathcal  T} = \infty\bigr) \right\vert  \\ 
&\leq  C_{k} \exp\bigl( - \delta_k (\ell-j)\bigr) + \sum_{\substack{i'\in{\cal E}\\ j'\in\Z, j' < k_0} } \P_{(i,j)}\bigl({\cal M}({\mathcal  T}) = i', {\cal A}({\mathcal  T})= j' \bigr)  C_{k} \exp\bigl( - \delta_k (\ell-j')\bigr)  \\
&\leq  C_k \exp\bigl( - \delta_k (\ell-j)\bigr) \bigl(1 + \exp(\delta_k (k_0-1))\bigr).
\end{align*}
Proposition~\ref{MAP_prop1} is therefore proved. 
\end{proof} 

\subsection{Applications to the local process $Z_1$}
Recall that the process $Z_1=(X_1,Y_1)$ is Markov-additive with Markovian part $X_1$ on $\N$ and additive part $Y_1$ in $\Z$. It satisfies all hypotheses~\ref{MAPas3}--\ref{MAPas7}: 
\begin{itemize}
\item Assumption~\ref{MAPas3} is satisfied thanks to Assumptions~\ref{as3} and \ref{as4};
\item Assumption~\ref{MAPas4} is satisfied with ${\cal E}_0 = \{0,\ldots,k_0-1\}$ and the function $f:\N \to [1,\infty)$ defined by
\begin{equation*}
   f(i) = e^{\theta i}
\end{equation*}
with $\theta > 0$ small enough, because of Assumptions~\ref{as1}, \ref{as2} and \ref{as5}; 
\item Assumption~\ref{MAPas5} is satisfied thanks to Assumptions~\ref{as1} and \ref{as2p};
\item Assumption~\ref{MAPas6} is satisfied thanks to Assumption~\ref{as6};
\item Assumption~\ref{MAPas7} is satisfied with ${\cal E}_0 = \{0,\ldots,k_0-1\}$, thanks to Assumptions~\ref{as1}, \ref{as2} and \ref{as5}. 
\end{itemize} 
Hence, from Propositions~\ref{MAP_prop1a} and~\ref{MAP_prop1}, for $\tau_{1}^{\textnormal{loc}}$ as in \eqref{eq:def_tau_1},
one immediately gets: 
\begin{prop}
\label{prop3_1}
Under the hypotheses \ref{as1}--\ref{as6}, for any $k\in{\cal E}$, $i\in\N$ and $j\geq k_0$, 
\be\label{sec3_eq1a}
\lim_{\ell\to\infty}\E_{(i,j)}\left( \sum_{n=0}^{\tau_{1}^{\textnormal{loc}}-1} \1_{\{X_1(n) = k, Y_1(n) = \ell\}}\right)  =   \frac{\pi_1(k)\P_{(i,j)}\bigl(\tau_{1}^{\textnormal{loc}}  =  \infty\bigr)}{V_1}.
\ee
If moreover Assumption~\ref{as2p} is satisfied, then
for any $k\in{\cal E}$, 
there exist ${C}_k > 0$ and $\delta_k > 0$ such that for any $i\in\N$, $j\geq k_0$ and $\ell\geq k_0$, 
\be\label{sec3_eq1}
\left|  \E_{(i,j)}\left( \sum_{n=0}^{\tau_{1}^{\textnormal{loc}}-1} \1_{\{X_1(n) = k, Y_1(n) = \ell\}}\right) -    \frac{\pi_1(k)\P_{(i,j)}\bigl(\tau_{1}^{\textnormal{loc}}  =  \infty\bigr)}{V_1} \right| \leq {C}_k \exp(-\delta_k (\ell - j)) .
\ee
\end{prop} 
Obviously, a similar result holds for the local process $Z_2=(X_2,Y_2)$, which is Markov-additive with a Markovian part $Y_2$ on $\N$ and an additive part $X_2$ in $\Z$.

\subsection{Applications to the original  process $Z$}
Regarding the original process $Z=(X,Y)$ and the stopping time $\tau$ as in \eqref{eq:def_tau}, Proposition~\ref{prop3_1} yields the following result: 
\begin{prop}
\label{prop3_1p}
Under the hypotheses \ref{as1}--\ref{as6}, for any $i,j,k\in\N$, 
\be\label{sec3_eq1b}
\lim_{\ell\to\infty}\E_{(i,j)}\left( \sum_{n=0}^{\tau -1} \1_{\{X(n) = k, Y(n) = \ell\}}\right) =  \frac{\pi_1(k)\P_{(i,j)}\bigl(\tau  =  \infty\bigr)}{V_1}.
\ee
If moreover Assumption~\ref{as2p} is satisfied, then
 for any $k\in\N$, there exist $C'_k > 0$ and $\delta_k' > 0$ such that for any $i,j\in\N$  and $\ell\geq k_0$, 
\be\label{sec3_eq1p}
\left|  \E_{(i,j)}\left( \sum_{n=0}^{\tau -1} \1_{\{X(n) = k, Y(n) = \ell\}}\right) -    \frac{\pi_1(k)\P_{(i,j)}\bigl(\tau  =  \infty\bigr)}{V_1} \right| \leq C'_k \exp(-\delta_k'(\ell - j)).
\ee
\end{prop}

\begin{proof}
In the case $j\geq k_0$, this statement follows from Propositions~\ref{prop3_1} in a straightforward way. Indeed, before the stopping time $\tau $, the process $Z$ has the same transition probabilities as the local process $Z_1$ before the time $\tau_{1}^{\textnormal{loc}}$. 

Consider now  $i\in\N$, $0\leq j < k_0$ and $\ell\geq k_0$. 
Using the Markov property, one gets 
\begin{multline}\label{sec3_eq2a} 
\E_{(i,j)}\left( \sum_{n=0}^{\tau -1} \1_{\{X(n) = k, Y(n) = \ell\}}\right)  = \\ \sum_{(i',j')\in\N^2, j' \geq k_0} \P_{(i,j)}\bigl(Z(1) = (i',j')\bigr) \E_{(i',j')}\left( \sum_{n=0}^{\tau -1} \1_{\{X(n) = k, Y(n) = \ell\}}\right)
\end{multline}
and 
\begin{equation}\label{sec3_eq2b} 
   \P_{(i,j)}\bigl(\tau  =  \infty\bigr) = \sum_{(i',j')\in\N^2, j' \geq k_0} \P_{(i,j)}\bigl(Z(1) = (i',j')\bigr) \P_{(i',j')}\bigl(\tau  =  \infty\bigr).
\end{equation}
Observe moreover that for any $i',j',k,\ell\in\N$ such that $k\geq k_0$ and $\ell\geq k_0$, 
\begin{multline}\label{sec3_eq2c} 
\E_{(i',j')}\left( \sum_{n=0}^{\tau -1} \1_{\{X(n) = k, Y(n) = \ell\}}\right) \\ \leq \E_{(k,\ell)}\left( \sum_{n=0}^{\infty} \1_{\{X_1(n) = k, Y_1(n) = \ell\}}\right) 
= \E_{(k,0)}\left( \sum_{n=0}^{\infty} \1_{\{X_1(n) = k, Y_1(n) = 0\}}\right). 
\end{multline} 
Since for $j\geq k_0$, \eqref{sec3_eq1b} is already proved by Propositions~\ref{prop3_1}, then by the dominated convergence theorem it follows that 
\begin{align*}
\lim_{\ell\to\infty} \E_{(i,j)}\left( \sum_{n=0}^{\tau -1} \1_{\{X(n) = k, Y(n) = \ell\}}\right)  &=  \sum_{(i',j')\in\N^2, j' \geq k_0} \P_{(i,j)}\bigl(Z(1) = (i',j')\bigr) \frac{\pi_1(k)}{V_1} \\ &= \frac{\pi_1(k)\P_{(i,j)}\bigl(\tau  =  \infty\bigr)}{V_1}.
\end{align*}
Relation \eqref{sec3_eq1b} is proved therefore  for any $k\in\N$. Furthermore, when combined together, relations \eqref{sec3_eq2a} and \eqref{sec3_eq2b} imply that
\begin{multline*}
\left\vert  \E_{(i,j)}\left( \sum_{n=0}^{\tau -1} \1_{\{X(n) = k, Y(n) = \ell\}}\right) -  \frac{\pi_1(k)}{V_1}\P_{(i,j)}\bigl(\tau  = \infty\bigr) \right\vert   \\  \leq  \sum_{i'\geq 0, j' \geq k_0}  \P_{(i,j)}\bigl(Z(1) = (i',j')\bigr)  \left\vert  \E_{(i',j')}\left( \sum_{n=0}^{\tau -1} \1_{\{X(n) = k, Y(n) = \ell\}}\right) -  \frac{\pi_1(k)}{V_1}\P_{(i',j')}\bigl(\tau  = \infty\bigr)\right\vert .
\end{multline*} 
In order to derive \eqref{sec3_eq1p}, we now split the right-hand side above into two parts, $\Sigma_1$ and $\Sigma_2$. The term $\Sigma_1$ corresponds to the summation over $(i',j')\in\N^2$ such that  $j' > \ell/2$, and $\Sigma_2$ when $(i',j')\in\N^2$ such that  $k_0 \leq j' \leq \ell/2$. To estimate $\Sigma_1$, we combine the straightforward relations 
\begin{align*}
&\left\vert  \E_{(i',j')}\left( \sum_{n=0}^{\tau -1} \1_{\{X(n) = k, Y(n) = \ell\}}\right) -  \frac{\pi_1(k)}{V_1}\P_{(i',j')}\bigl(\tau  = \infty\bigr)\right\vert  \\
&\hspace{2cm}\leq \E_{(i',j')}\left( \sum_{n=0}^{\tau -1} \1_{\{X(n) = k, Y(n) = \ell\}}\right) +  \frac{\pi_1(k)}{V_1} \\
&\hspace{2cm}\leq \E_{(i',j')}\left( \sum_{n=0}^{\infty} \1_{\{X(n) = k, Y(n) = \ell\}}\right) +  \frac{\pi_1(k)}{V_1} = g\bigl((i',j')\to(k,\ell)\bigr) +  \frac{\pi_1(k)}{V_1}
\end{align*}
with Proposition \ref{sec4_lem1}. This proves that 
\begin{equation*}
   \Sigma_1\leq  \sum_{i'\geq 0, j' > \ell/2}  \P_{(i,j)}\bigl(Z(1) = (i',j')\bigr) \left( A_k + \frac{1}{V_1}\right) \leq   \P_{(i,j)}\bigl(Y(1) >  \ell/2\bigr) \left( A_k + \frac{1}{V_1}\right),
\end{equation*}
with 
\begin{equation*}
   A_k = \sup_{i,j,\ell\in\N} g\bigl((i,j)\to(k,\ell)\bigr) <  \infty. 
\end{equation*}
Moreover, applying the Markov inequality and using Assumption~\ref{as2}, we obtain for $0 \leq j < k_0$ and $\alpha = (0,\delta)$ with $\delta > 0$ small enough,  
\begin{equation*}
     \P_{(i,j)}\bigl(Y(1) >  \ell/2\bigr)  \leq \exp(-\delta \ell/2) \E_{(i,j)}\bigl(\exp\langle\alpha, Z(1)\rangle\bigr)  \leq  C \exp\bigl(-\delta \ell/2 + \delta (k_0-1)\bigr)
\end{equation*}
with some $C > 0$ not depending on $i$ and $\ell$. 
Therefore,
\begin{equation}
\label{eq:estimate_sigma1}
     \Sigma_1  \leq C \left( A_k +  \frac{1}{V_1}\right) \exp\bigl(-\delta \ell/2 + \delta (k_0-1)\bigr).
\end{equation}
To estimate $\Sigma_2$, we use  Proposition~\ref{prop3_1}:
\begin{equation}
\label{eq:estimate_sigma2}
     \Sigma_2  \leq  \sum_{i'\geq 0,  k_0 \leq j' \leq \ell/2}  \P_{(i,j)}\bigl(Z(1) = (i',j')\bigr){C}_{k} \exp\bigl( - \delta_k (\ell-j')\bigr) \leq {C}_{k} \exp\bigl( - \delta_k \ell/2\bigr).
\end{equation}  
When combined,
\eqref{eq:estimate_sigma1} and \eqref{eq:estimate_sigma2} prove \eqref{sec3_eq1p}, with $\delta_k' = \min\{\delta_k, \delta\}/2$ and 
\begin{equation*}
     C_k' = \widetilde{C}_k + C \left( A_k +  \frac{1}{V_1}\right) \exp\bigl(\delta (k_0-1)\bigr).\qedhere
\end{equation*}
\end{proof}

\section{Preliminary results to the proof of Theorem~\ref{thm:main-1} and Theorem~\ref{thm:main-2}}
\label{sec:4}

Consider for any $k\in\mathbb N$ the stopping times
\begin{equation}
\label{eq:T1_T2_k}
     {T}_{k} =\inf\{n > 0: X(n) \leq (k_0-1)\vee k\},
\end{equation}
and let $\tau $ be defined in \eqref{eq:def_tau} (see also Section~\ref{sec:hitting_times}).

The following lemma is needed for the proof of both Theorem~\ref{thm:main-1} and Theorem~\ref{thm:main-2}. 

\begin{lem}
\label{sec4_lem3}
Under Assumptions~\ref{as1}--\ref{as6}, for any $i,j,k\in\N$ and $\ell > k_0$, the following identities hold: 
\begin{multline}
\label{sec4_eq1}
     {g}\bigl((i,j)\to (k,\ell)\bigr) =\E_{(i,j)}\left( \sum_{n=0}^{\tau-1} \1_{\{X(n) = k, Y(n) = \ell\}}\right)   + \sum_{\substack{0\leq j'< k_0\\ i'\geq 0}} {g}\bigl((i,j)\to(i',j')\bigr) \times \\ \times \sum_{\substack{0\leq i''\leq( k_0-1)\vee k\\ j'' \geq k_0}} \P_{(i',j')}\bigl(Z({T}_{k}) = (i'',j''), {T}_{k}< \tau\bigr) \E_{(i'',j'')}\left( \sum_{n=0}^{\tau-1} \1_{\{X(n) = k, Y(n) = \ell\}}\right)  ,
\end{multline} 
\begin{align}
\label{sec4_eq2}
     \P_{(i,j)}\bigl(\tau = \infty\bigr) &= \P_{(i,j)}\bigl( {\cal N}_1 = 0\bigr),\\
\label{sec4_eq3}
     \P_{(i,j)}\bigr( \tau = \infty\bigr) &=
     \P_{(i,j)}\bigl( {T}_{k}< \infty,  \tau = \infty\bigr),
\end{align}
and
\begin{multline}
\label{sec4_eq4} 
     \sum_{\substack{0\leq j'< k_0\\ i'\geq 0}} {g}\bigl((i,j)\to(i',j')\bigr) \hspace{-3mm}\sum_{\substack{0\leq i''\leq (k_0-1)\vee k, \\j'' \geq k_0}}\!\!\!\P_{(i',j')}\bigl(Z({T}_{k}) = (i'',j''),  {T}_{k}< \tau\bigr) \P_{(i'',j'')}\bigl(\tau = \infty\bigr)  \\  =\P_{(i,j)}\bigl( 1 \leq {\cal N}_1 < \infty\bigr).  
\end{multline}
\end{lem}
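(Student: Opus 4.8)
The plan is to prove the four identities \eqref{sec4_eq1}--\eqref{sec4_eq4} by decomposing trajectories of $Z$ according to whether and when they visit the boundary strip $\N\times\{0,\ldots,k_0-1\}$, and then by a last-exit (or first-entrance) decomposition at that strip. The key combinatorial observation is that, before its first visit to $\{Y<k_0\}$ (i.e.\ before time $\tau$), the walk $Z$ behaves exactly like the killed local walk $\widehat Z_1^+$, so its contribution to the Green function is precisely $\widehat g_1^+\bigl((i,j)\to(k,\ell)\bigr)$; and after any such visit, one conditions on the configuration and restarts via the strong Markov property.

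First I would establish \eqref{sec4_eq1}. Fix $i,j,k\in\N$ and $\ell>k_0$. Split the sum defining $g\bigl((i,j)\to(k,\ell)\bigr)$ in \eqref{eq:def_Green_function} according to whether the path reaches $(k,\ell)$ before time $\tau$ or not. Since $\ell>k_0\geq k_0$, the path reaches $(k,\ell)$ at a time strictly before $\tau$ if and only if it never entered the strip up to that point, and this contribution is exactly $\widehat g_1^+\bigl((i,j)\to(k,\ell)\bigr)$ by the definition of $\widehat g_1^+$ and \eqref{eq:relation_+1}. On the complementary event, the path visits the strip at least once; perform a \emph{last-exit decomposition at the strip}: let the last visit to $\{Y<k_0\}$ occur at some site $(i',j')$ with $0\le j'<k_0$, $i'\ge0$; the expected number of visits to $(i',j')$ is $g\bigl((i,j)\to(i',j')\bigr)$; from $(i',j')$ the walk must then leave the strip and reach $(k,\ell)$ without ever returning to $\{Y<k_0\}$. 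Introducing the stopping time $T^k$ of \eqref{eq:T1_T2_k} (first time $X\le(k_0-1)\vee k$), decompose this further by the position $Z(T^k)=(i'',j'')$ of the walk at the first time it comes back near the vertical axis, on the event $\{T^k<\tau\}$; from $(i'',j'')$ the residual Green mass to $(k,\ell)$ with no visit to $\{Y<k_0\}$ is again $\widehat g_1^+\bigl((i'',j'')\to(k,\ell)\bigr)$ — this is where the typo ``$(k,j)$'' in the statement should read ``$(k,\ell)$''. Assembling these pieces by the strong Markov property gives \eqref{sec4_eq1}; the insertion of $T^k$ (rather than leaving the sum over the last exit only) is purely bookkeeping needed to match the form appearing in later proofs, and it is legitimate because after $T^k$ and before returning to $\{Y<k_0\}$ the walk is governed by the same transitions as $\widehat Z_1^+$.

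Next, \eqref{sec4_eq2} is immediate from the definitions: $\{\tau=\infty\}$ means $Y(n)\ge k_0$ for all $n\ge0$, which is exactly the event that $Z$ never visits $\N\times\{0,\ldots,k_0-1\}$, i.e.\ $\{\mathcal N_1=0\}$; here $\mathcal N_1$ is the count in \eqref{eq:def_N1_N2}. For \eqref{sec4_eq3} I would argue that $\{\tau=\infty\}\subset\{T^k<\infty\}$ up to a $\P_{(i,j)}$-null set: on $\{\tau=\infty\}$ the walk stays in $\{Y\ge k_0\}$ forever, so it behaves like $Z_1$, and since $m_1<0$ the induced one-dimensional dynamics in the $X$-coordinate is positive recurrent (Lemma~\ref{positive_recurrence_lemma}), hence $\liminf_n X(n)\le k_0-1\le (k_0-1)\vee k$ almost surely, forcing $T^k<\infty$; the reverse inclusion $\{T^k<\infty,\tau=\infty\}\subset\{\tau=\infty\}$ is trivial. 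Finally, \eqref{sec4_eq4} is obtained by the same last-exit-at-the-strip decomposition as in \eqref{sec4_eq1}, but applied to the \emph{event} $\{1\le\mathcal N_1<\infty\}$ rather than to a Green function: $\{1\le\mathcal N_1<\infty\}$ is the event that $Z$ visits the strip at least once and finitely often; decomposing at the \emph{last} visit $(i',j')$ (contributing $g\bigl((i,j)\to(i',j')\bigr)$), then at the position $Z(T^k)=(i'',j'')$ on $\{T^k<\tau\}$, and finally using that the remaining path never returns to the strip with probability $\P_{(i'',j'')}(\tau=\infty)$, yields exactly the left-hand side of \eqref{sec4_eq4}; summing over all intermediate configurations and using \eqref{sec4_eq3} to see that $T^k<\infty$ automatically on $\{\tau=\infty\}$ closes the identity.

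The main obstacle is the careful justification of the \emph{last-exit decomposition at an infinite set} (the strip $\N\times\{0,\ldots,k_0-1\}$, which is not finite) and of the re-summation that inserts the stopping time $T^k$: one must check that on the event under consideration the last visit to the strip is almost surely well-defined (which is exactly why the decomposition is carried out on $\{\mathcal N_1<\infty\}$ in \eqref{sec4_eq4}, and on the complement of $\{\tau=\infty\}$ in \eqref{sec4_eq1}), that all the interchanges of sums and expectations are licit (guaranteed by non-negativity and Tonelli), and that the strong Markov property is applied at the correct stopping times. The transience of $Z$ (Proposition~\ref{sec4_lem1}) ensures all the Green terms $g\bigl((i,j)\to(i',j')\bigr)$ are finite, so no divergence issues arise. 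Everything else is routine trajectory bookkeeping.
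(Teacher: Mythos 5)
Your proof is correct and follows essentially the same route as the paper's: split the Green function at $\tau$ and identify the pre-$\tau$ contribution with $\widehat{g}_1^+$, then use the last-exit decomposition at the strip followed by a decomposition at $T^k$ for \eqref{sec4_eq1}; treat \eqref{sec4_eq2} as definitional; get \eqref{sec4_eq3} from the a.s.\ finiteness of ${T}_{1}^{k}$ under $m_1<0$ (you via positive recurrence of $X_1$, the paper via the same negative-drift fact plus a one-step argument for $j<k_0$); and obtain \eqref{sec4_eq4} by the same chain of strong Markov identities, just narrated in the reverse direction. You also correctly note that ``$(k,j)$'' in \eqref{sec4_eq1} should read ``$(k,\ell)$''; there is no substantive gap.
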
 
\begin{proof}
To prove the identity \eqref{sec4_eq1}, let us observe that for any $n > 0$, 
\begin{equation*}
\P_{(i,j)}\bigl(Z(n)=(k,\ell)\bigr) = \P_{(i,j)}\bigl(Z(n)=(k,\ell), \tau > n \bigr)  + \P_{(i,j)}\bigl(Z(n)=(k,\ell), \tau \leq n\bigr), 
\end{equation*}
with 
\begin{multline*}
     \P_{(i,j)}\bigl(Z(n)=(k,\ell), \tau \leq n\bigr) \\= 
     \sum_{s=1}^{n} \sum_{i'\geq 0, 0\leq j' < k_0} \P_{(i,j)}\bigl(Z(s)=(i',j')\bigr)\P_{(i',j')}\bigl(Z(n-s) = (k,\ell), \tau > n-s\bigr).
\end{multline*} 
Moreover, according to the definition of the stopping time ${T}_{k}$, on the event $\{Z(n)=(k,\ell)\}$ one has ${T}_{k}\leq n$, and consequently, 
\begin{multline*}
\P_{(i',j')}\bigl(Z(n) = (k,\ell), \tau > n\bigr) = \P_{(i',j')}\bigl(Z(n) = (k,\ell), \tau > n \geq {T}_{k}\bigr)\\
=\sum_{\substack{0\leq i''\leq (k_0-1)\vee k\\j'' \geq k_0}} \sum_{s=1}^{n} \P_{(i',j')}\bigl(Z(s) = (i'',j''), {T}_{k}= s\bigr) \P_{(i'',j'')}\bigl(Z(n-s) = (k,\ell), \tau > n-s \bigr).
\end{multline*} 
Hence, for any $i,j,k\in\N$ and $\ell \geq k_0$, 
\begin{multline*}
{g}\bigl((i,j)\to (k,\ell)\bigr) = \E_{(i,j)}\left( \sum_{n=0}^{\tau-1} \1_{\{X(n) = k, Y(n) = \ell\}}\right)  \\
+ \sum_{\substack{0\leq j'< k_0\\ i'\geq 0}} {g}\bigl((i,j)\to(i',j')\bigr) \E_{(i',j')}\left( \sum_{n=0}^{\tau-1} \1_{\{X(n) = k, Y(n) = \ell\}}\right) ,
\end{multline*}
with 
\begin{multline*}
\E_{(i',j')}\left( \sum_{n=0}^{\tau-1} \1_{\{X(n) = k, Y(n) = \ell\}}\right) = \sum_{\substack{0\leq i''\leq( k_0-1)\vee k\\ j'' \geq k_0}}    \P_{(i',j')}\bigl(Z({T}_{k}) = (i'',j''), {T}_{k}< \tau\bigr)\times \\
\times\E_{(i'',j'')}\left( \sum_{n=0}^{\tau-1} \1_{\{X(n) = k, Y(n) = \ell\}}\right).
\end{multline*}
When combined, the above relations prove \eqref{sec4_eq1}.

Equation \eqref{sec4_eq2} follows from  the definition of the stopping time $\tau$ and the variable ${\cal N}_1$, see \eqref{eq:def_N1_N2} and \eqref{eq:def_tau}.

The identity \eqref{sec4_eq3} will follow rather easily from the definition of the stopping times $\tau$, ${T}_{k}$, ${T}_{k}^{\textnormal{loc}}$ and $\tau_{1}^{\textnormal{loc}}$, where  ${T}_{k}^{\textnormal{loc}} $ and $\tau_{1}^{\textnormal{loc}}$ are defined in the same way as the stopping times $\tau$ and ${T}_{k}$ but for the local process $Z_1=(X_1,Y_1)$: see \eqref{eq:def_tau_1} and 
\be\label{eq:def_T_k_loc} 
T_k^{\textnormal{loc}} =\inf\{n > 0: X_1(n) \leq (k_0-1)\vee k\}.
\ee
We first consider the case of $(i,j)\in\N^2$ with $j \geq k_0$. We have
\begin{equation*}
     \P_{(i,j)}\bigl( {T}_{k}< \infty,  \tau = \infty\bigr) = \P_{(i,j)}\bigl( {T}_{k}^{\textnormal{loc}} < \infty,  \tau_{1}^{\textnormal{loc}} = \infty\bigr) = \P_{(i,j)}\bigl( \tau_{1}^{\textnormal{loc}} = \infty\bigr),
\end{equation*} 
where the last relation holds because $m_1 < 0$ and consequently $\P_{(i,j)}\bigl({T}_{k}^{\textnormal{loc}} < \infty\bigr) = 1$. Moreover, since for $(i,j)\in\N^2$ with $j\geq k_0$
\begin{equation*}
     \P_{(i,j)}\bigl(  \tau_{1}^{\textnormal{loc}}  = \infty\bigr) = \P_{(i,j)}\bigl(\tau = \infty\bigr),
\end{equation*}
we conclude that for $(i,j)\in\N^2$ with $j \geq k_0$, \eqref{sec4_eq3} holds. We now deal with pairs $(i,j)\in\N^2$ satisfying $0\leq j < k_0$. We then have
\begin{align*}
\P_{(i,j)}\bigl( {T}_{k}< \infty, \tau = \infty\bigr)  &= \P_{(i,j)}\bigl( {T}_{k}=1,  \tau = \infty\bigr) + \P_{(i,j)}\bigl( 1 < {T}_{k}< \infty,  \tau = \infty\bigr) \\
&= \sum_{\substack{0\leq i'\leq (k_0-1)\vee k  \\j' \geq k_0}} \P_{(i,j)}\bigl(Z(1)=(i',j')\bigr)\P_{(i',j')}\bigl(\tau = \infty\bigr)  \\&\hspace{1cm} + \sum_{\substack{ i' > (k_0-1)\vee k  \\j' \geq k_0}}\P_{(i,j)}\bigl(Z(1)=(i',j')\bigr) \P_{(i',j')}\bigl( {T}_{k}< \infty, \tau = \infty\bigr) \\
&= \sum_{\substack{0\leq i'\leq (k_0-1)\vee k  \\j' \geq k_0}}\P_{(i,j)}\bigl(Z(1)=(i',j')\bigr)\P_{(i',j')}\bigl(\tau = \infty\bigr)  \\&\hspace{1cm} + \sum_{\substack{ i' > (k_0-1)\vee k  \\j' \geq k_0}}\P_{(i,j)}\bigl(Z(1)=(i',j')\bigr) \P_{(i',j')}\bigl( \tau = \infty\bigr) \\
&= \P_{(i,j)}\bigl( {T}_{k}=1, \tau = \infty\bigr) + \P_{(i,j)}\bigl(  {T}_{k}> 1, \tau = \infty\bigr) \\ &= \P_{(i,j)}\bigl( \tau = \infty\bigr).
\end{align*} 
Relation \eqref{sec4_eq3} is therefore proved.

We conclude by providing the proof of \eqref{sec4_eq4}. We start by the right-hand side of \eqref{sec4_eq4} and we prove that it equals the probability $\P_{(i,j)}\bigl( 1 \leq {\cal N}_1 < \infty\bigr)$: 

\begin{align*}
\sum_{\substack{0\leq j'< k_0\\i'\geq 0}} &{g}\bigl((i,j)\to(i',j')\bigr)\sum_{\substack{0\leq i''\leq (k_0-1)\vee k\\j'' \geq k_0}} \P_{(i',j')}\bigl(Z({T}_{k}) = (i'',j''),  {T}_{k}< \tau\bigr) \P_{(i'',j'')}\bigl(\tau = \infty\bigr)\nonumber\\ 
&= \sum_{\substack{0\leq j'< k_0\\i'\geq 0}} {g}\bigl((i,j)\to(i',j')\bigr)  \sum_{\substack{0\leq i''\leq (k_0-1)\vee k\\j'' \geq k_0}}\P_{(i',j')}\bigl(Z({T}_{k}) = (i'',j''),  {T}_{k}< \infty,  \tau = \infty\bigr)\nonumber\\
&= \sum_{\substack{0\leq j'< k_0\\i'\geq 0}} {g}\bigl((i,j)\to(i',j')\bigr)  \P_{(i',j')}\bigl( {T}_{k}< \infty,  \tau = \infty\bigr)\nonumber\\
&= \sum_{\substack{0\leq j'< k_0\\i'\geq 0}} {g}\bigl((i,j)\to(i',j')\bigr)  \P_{(i',j')}\bigl(\tau = \infty\bigr)\\
&=\sum_{n=0}^\infty \sum_{\substack{0\leq j'< k_0\\ i'\geq 0}} \P_{(i,j)}\bigl( Z(n) = (i',j') \text{ and } \forall s\geq n,\,Y(s) \geq k_0\bigr) \\
&=  \sum_{n=0}^\infty  \P_{(i,j)}\bigl( Y(n) < k_0 \text{ and } \forall s\geq n,\, Y(s) \geq k_0\bigr) \\
&=   \P_{(i,j)}\bigl( \exists n\geq 1 \text{ such that } Y(n) < k_0  \text{ and } \forall s\geq n,\,Y(s) \geq k_0 \bigr) \\
&= \P_{(i,j)}\bigl( 1\leq {\cal N}_1 < \infty\bigr).
\end{align*} 
The proof is complete.
\end{proof} 

The following lemma is needed for the proof of Theorem~\ref{thm:main-2}:
\begin{lem}
\label{sec4_lem2}
Under Assumptions~\ref{as1}, \ref{as2p} and \ref{as5}, for any $k\geq 0$, there exist $\widetilde{C} >0$ and $\widetilde\delta > 0$ such that 
\be\label{sec4eq4} 
     \P_{(i, j)}\bigl( Y({T}_{k}) = \ell , {T}_{k}<  \tau\bigr) \leq \widetilde{C} \exp\bigl(-\widetilde\delta (i + \ell)\bigr), \quad \forall i \geq k_0,\,j\in\{0,\ldots,k_0-1\},\,\ell\geq 0. 
\ee
\end{lem}

\begin{proof} 
Consider the Laplace transform of the increments of the random walk $Z_0$, namely,
\begin{equation}
\label{eq:def_R}
     \Phi_{k_0,k_0}(\alpha) = \E_{(0,0)}\bigl(\exp\langle\alpha, Z_0(1)\rangle\bigr).
\end{equation}
Remark that $\nabla \Phi_{k_0,k_0}(0) = m = (m_1,m_2)$, and consequently,  for any $\alpha =(\alpha_1,\alpha_2)\in\R^2$ satisfying the inequality $\langle\alpha, m\rangle < 0$, the  function $s \mapsto \Phi_{k_0,k_0}(\alpha s)$ is decreasing in a neighborhood of $0$. Hence, letting $\alpha_1 = 0$ and $\alpha_2 = \theta > 0$, for $\theta > 0$ small enough, one gets
\begin{equation}
\label{eq:R<1}
     \Phi_{k_0,k_0}(\alpha) < 1.
\end{equation} 
Moreover, for all $\theta > 0$ small enough,  by Assumption \ref{as2p}, there is $C > 0$ such that 
\begin{equation}\label{C}
    \max_{ i \geq 0, 0 \leq j \leq k_0}  \E_{(i,j)}\bigl(\exp(\theta  Y(1))\bigr)  =  \max_{ i \geq 0, 0 \leq j \leq k_0}  \Phi_{(i,j)}\bigl((0,\theta)\bigr)  \leq C.
\end{equation}
To prove our lemma, we chose $\alpha=(0,\theta)$ with $\theta > 0$ small enough so that \eqref{eq:R<1} and \eqref{C} hold. 
Then, for any  $n\geq0$ and $k\geq 0$, the Markov inequality yields 
\begin{align*} 
     \P_{(i, j)}\bigl( Y({T}_{k}) = \ell, {T}_{k} = n <  \tau\bigr) &= \P_{(i, j)}\bigl( Y(n) = \ell ,  {T}_{k} = n <  \tau\bigr) \\
&\leq \exp(- \theta \ell) \E_{(i,j)}\bigl( \exp(\theta Y(n)); {T}_{k} = n <  \tau\bigr).
\end{align*} 
Furthermore, it follows from the definitions \eqref{eq:def_tau} and \eqref{eq:T1_T2_k} of the stopping times $\tau$ and ${T}_{k}$ that  
\begin{align*} 
     \E_{(i,j)}&\bigl( \exp(\theta Y(n)); {T}_{k}  = n <  \tau\bigr)  \\
&=\sum_{\substack{i' > (k_0 - 1)\vee k\\ j'> k_0}} \P_{(i,j)}\bigl(Z(1) = (i',j')\bigr) \E_{(i',j')}\bigl( \exp(\theta Y(n-1)); {T}_{k} = n - 1<  \tau\bigr) \\
&\leq \sum_{\substack{i' > (k_0-1)\vee k\\ j'> k_0}} \P_{(i,j)}\bigl(Z(1) = (i',j')\bigr) \E_{(i',j')}\bigl( \exp\langle\alpha, Z_0(n-1)\rangle\bigr). 
\end{align*}
Hence for $\alpha=(0,\theta)$, 
\begin{align*} 
     \E_{(i,j)}&\bigl( \exp(\theta Y(n)); {T}_{k}  = n <  \tau\bigr)  \\
&\leq \sum_{\substack{i' > (k_0-1)\vee k\\ j'> k_0}} \P_{(i,j)}\bigl(Z(1) = (i',j')\bigr) \exp\bigl(\theta j') \Phi_{k_0,k_0}(\alpha)^{n-1} \\
&\leq \E_{(i,j)}\bigl(\exp(\theta Y(1))\bigr) \Phi_{k_0,k_0}(\alpha)^{n-1},
\end{align*}
with $\Phi_{k_0,k_0}$ as in \eqref{eq:def_R}. Using next \eqref{eq:R<1} and \eqref{C}, we conclude that for all $n>1$, 
\begin{equation}\label{eq_sec4_lem2}
     \P_{(i, j)}\bigl( Y({T}_{k}) = \ell , {T}_{k}  = n <  \tau\bigr) \leq C \Phi_{k_0,k_0}(\alpha)^{n-1} \exp(- \theta \ell), 
\end{equation}
with $\Phi_{k_0,k_0}(\alpha) < 1$. Remark now that thanks to Assumption~\ref{as1}, the steps $X(n+1)-X(n)$ of the process $\{X(n)\}$ are bounded below by $-k_0$ and hence, for $n < (i-(k_0-1)\vee k)/k_0$, 
\begin{equation*}
   \P_{(i, j)}\bigl( Y({T}_{k}) = \ell , {T}_{k}  = n <  \tau\bigr)  = 0. 
\end{equation*}
This proves that 
\begin{equation*}
     \P_{(i, j)}\bigl( Y({T}_{k}) = \ell , {T}_{k} <  \tau\bigr) =\sum_{n \geq (i-(k_0-1)\vee k)/k_0} \P_{(i, j)}\bigl( Y({T}_{k}) = \ell ,  {T}_{k} = n <  \tau\bigr),
\end{equation*}
and hence, using \eqref{eq_sec4_lem2}, we obtain 
\begin{align*} 
     \P_{(i, j)}\bigl( Y({T}_{k}) = \ell , {T}_{k} <  \tau\bigr) 
     &\leq C  \exp(- \theta \ell)  \sum_{n \geq (i-(k_0-1)\vee k)/k_0} \Phi_{k_0,k_0}(\alpha)^{n-1} \\
     &\leq C \exp(- \theta \ell) (1-\Phi_{k_0,k_0}(\alpha))^{-1}  \Phi_{k_0,k_0}(\alpha)^{(i - (k_0-1)\vee k)/k_0 - 1}. 
\end{align*}
The last relation proves \eqref{sec4eq4} with $\widetilde\delta = \min\{\theta, \frac{-\log \Phi_{k_0,k_0}(\alpha)}{k_0}\}$ and 
\begin{equation*}
     \widetilde{C} = C_0 (1-\Phi_{k_0,k_0}(\alpha))^{-1} \Phi_{k_0,k_0}(\alpha)^{ - ((k_0-1)\vee k)/k_0 - 1}.\qedhere
\end{equation*}
\end{proof} 

\section{Proof of Theorem~\ref{thm:main-1}} 
\label{sec:5p}
To prove this theorem, we apply the dominated convergence theorem in \eqref{sec4_eq1}. Recall that by \eqref{sec4_eq1}, 
\begin{multline}\label{sec5p_eq1}
    \lim_{\ell\to\infty}  {g}\bigl((i,j)\to (k,\ell)\bigr) = \lim_{\ell\to\infty} \E_{(i,j)}\left( \sum_{n=0}^{\tau-1} \1_{\{X(n) = k, Y(n) = \ell\}}\right)   \\+  \lim_{\ell\to\infty} \sum_{\substack{0\leq i''\leq( k_0-1)\vee k\\ j'' \geq k_0}} C_{i,j}(i'',j'')\E_{(i'',j'')}\left( \sum_{n=0}^{\tau-1} \1_{\{X(n) = k, Y(n) = \ell\}}\right)  ,
\end{multline} 
where
\begin{equation*}
C_{i,j}(i'',j'') = \sum_{\substack{0\leq j'< k_0\\ i'\geq 0}} {g}\bigl((i,j)\to(i',j')\bigr) \P_{(i',j')}\bigl(Z({T}_{k}) = (i'',j''), {T}_{k}< \tau\bigr).
\end{equation*}
Moreover, for any $k\in\N$ and $(i,j)\in\N^2$, by Proposition~\ref{prop3_1p}, 
\begin{equation*}
\lim_{\ell\to\infty}  \E_{(i,j)}\left( \sum_{n=0}^{\tau-1} \1_{\{X(n) = k, Y(n) = \ell\}}\right) = \frac{\pi_1(k)}{V_1}\P_{(i,j)}\bigl(\tau = \infty\bigr) 
\end{equation*}
and by \eqref{sec4_eq4} combined with \eqref{sec4_eq2},
\begin{multline*}
\P_{(i,j)}\bigl(\tau = \infty\bigr) + \sum_{\substack{0\leq i''\leq( k_0-1)\vee k\\ j'' \geq k_0}} C_{i,j}(i'',j'') \P_{(i'',j'')}\bigl(\tau = \infty\bigr) \\ = \P_{(i,j)}\bigl({\cal N}_1 = 0\bigr) + \P_{(i,j)}\bigl(1\leq {\cal N}_1 < \infty\bigr) = \P_{(i,j)}\bigl({\cal N}_1 < \infty\bigr). 
\end{multline*} 
Hence to get \eqref{eq:thm:main-1a} it is sufficient to show that the order of the limit as $\ell\to\infty$ and the summation over $0\leq i''\leq( k_0-1)\vee k$ and  $j'' \geq k_0$ in the right-hand side of \eqref{sec5p_eq1} can be exchanged. 
Remark moreover that by Proposition~\ref{sec4_lem1}, there is $C>0$ such that for all $i'',j'',\ell\in\N$, 
\begin{equation*}
\E_{(i'',j'')}\left( \sum_{n=0}^{\tau-1} \1_{\{X(n) = k, Y(n) = \ell\}}\right) \leq \E_{(i'',j'')}\left( \sum_{n=0}^{\infty} \1_{\{X(n) = k, Y(n) = \ell\}}\right) = g\bigl(((i'',j'')\to(k,\ell)\bigr) \leq C.
\end{equation*} 
By the dominated convergence theorem, to get  \eqref{eq:thm:main-1a}, it is therefore sufficient to show that the series 
\begin{equation*}
\sum_{\substack{0\leq i''\leq( k_0-1)\vee k\\ j'' \geq k_0}} C_{i,j}(i'',j'')
\end{equation*}
converges. Since by \eqref{sec4_eq4}, 
\begin{equation*}
\sum_{\substack{0\leq i''\leq( k_0-1)\vee k\\ j'' \geq k_0}} C_{i,j}(i'',j'') \P_{(i'',j'')}\bigl(\tau<\infty\bigr) = \P_{(i,j)}\bigl(1\leq {\cal N}_1 < \infty) < \infty, 
\end{equation*}
it is sufficient to show that for some $N>0$ large enough,
\begin{equation*}
\inf_{\substack{0\leq i''\leq( k_0-1)\vee k\\ j'' \geq N}} \P_{(i'',j'')}\bigl(\tau<\infty\bigr)  > 0. 
\end{equation*}
To get the last relation, we notice that  for any $i''\in\N$,  the sequence $\bigl\{\P_{(i'',j'')}\bigl(\tau = \infty\bigr)\bigr\}_{j''\geq k_0}$ is increasing, and because of Assumption~\ref{as6}, from \eqref{fluid_limit_eq} it follows that for $j'' \geq k_0$ large enough, 
\begin{equation*}
   \P_{(i'',j'')}\bigl(\tau = \infty\bigr) > 0. 
\end{equation*}
Hence for $N\geq k_0$ large enough,
\begin{equation*}
   \inf_{\substack{0\leq i''\leq( k_0-1)\vee k\\ j'' \geq N}} \P_{(i'',j'')}\bigl(\tau<\infty\bigr) = \inf_{\substack{0\leq i''\leq( k_0-1)\vee k\\ j'' = N}} \P_{(i'',j'')}\bigl(\tau<\infty\bigr) > 0. 
\end{equation*}
Theorem~\ref{thm:main-1} is therefore proved. 

\section{Proof of Theorem~\ref{thm:main-2}} 
\label{sec:5}
Denote for $(i,j),(k,\ell)\in\N^2$, 
\begin{align*}
     \Delta_1\bigl((i,j)\to(k,\ell)\bigr) 
     &= \left\vert {g}\bigl((i,j)\to(k,\ell)\bigr) - \frac{\pi_1(k)}{V_1}\P_{(i,j)}\bigl({\cal N}_1 < \infty\bigr)\right\vert,\\  
     \Delta^+_1\bigl((i,j)\to(k,\ell)\bigr) 
     &= \left\vert \E_{(i,j)}\left( \sum_{n=0}^{\tau-1} \1_{\{X(n) = k, Y(n) = \ell\}}\right)   - \frac{\pi_1(k)}{V_1}\P_{(i,j)}\bigl(\tau = \infty\bigr)\right\vert.
\end{align*}
From Lemma~\ref{sec4_lem3}, it follows that for any $(i,j), (k,\ell)\in\N^2$ with $\ell \geq k_0$,
\begin{multline*}
     \Delta_1\bigl((i,j)\to(k,\ell)\bigr)   \leq  \Delta^+_1\bigl((i,j)\to(k,\ell)\bigr)  + \sum_{\substack{0\leq j'< k_0\\ i'\geq 0}} {g}\bigl((i,j)\to(i',j')\bigr) \\ \times 
  \sum_{\substack{0\leq i''\leq (k_0-1)\vee k \\j'' \geq k_0}} \P_{(i',j')}\bigl(Z({T}_{k}) = (i'',j''), {T}_{k}< \tau\bigr)  \Delta^+_1\bigl((i'',j'')\to(k,\ell)\bigr).
\end{multline*} 
Recall moreover that by Proposition~\ref{sec4_lem1},
\be
\label{proof_th1_eq2}
     \widetilde{C}_k = \sup_{i,j,\ell\in\N}  g\bigl((i,j)\to(k,\ell)\bigr)   <\infty,
\ee
and by Lemma~\ref{sec4_lem2}, for any $i'\geq0$, $0\leq j'< k_0$, $0\leq i''\leq (k_0-1)\vee k$ and $j''\geq k_0$, 
\begin{equation*}
     \P_{(i',j')}\bigl(Z({T}_{k}) = (i'',j''), {T}_{k}< \tau\bigr) \leq \P_{(i',j')}\bigl(Y({T}_{k}) = j'', {T}_{k}< \tau\bigr) 
\leq  \widetilde{C} \exp\bigl(-\widetilde\delta (i' + j'')\bigr).
\end{equation*} 
Hence,   
\begin{align}
     \Delta_1\bigl((i,j)\to(k,\ell)\bigr)   &\leq  \Delta^+_1\bigl((i,j)\to(k,\ell)\bigr) \nonumber\\ &\hspace{1cm} + \widetilde{C}_k\sum_{\substack{0\leq j'< k_0\\i'\geq 0}} \sum_{\substack{0\leq i''\leq (k_0-1)\vee k \\j'' \geq k_0}} \hspace{-0.4cm}\widetilde{C} \exp\bigl(-\widetilde\delta (i' + j'')\bigr)  \Delta^+_1\bigl((i'',j'')\to(k,\ell)\bigr) \nonumber\\
&\leq  \Delta^+_1\bigl((i,j)\to(k,\ell)\bigr) + A_k  \hspace{-0.4cm}\sum_{\substack{0\leq i''\leq (k_0-1)\vee k \\j'' \geq k_0}} \exp\bigl(- \widetilde\delta j''\bigr)  \Delta^+_1\bigl((i'',j'')\to(k,\ell)\bigr), \label{proof_th1_eq1} 
\end{align}
with $A_k = \widetilde{C}_k\widetilde{C} k_0 (1-e^{-\widetilde\delta})^{-1}$.   
Recall that by Proposition~\ref{prop3_1p}, for any $i,j,k\in\N$ and $\ell\geq k_0$, 
\be
\label{proof_th1_eq3} 
     \bigl\vert\Delta_1^+\bigl((i,j)\to(k,\ell)\bigr)\bigr\vert  \leq  C_{k} \exp\bigl( - \delta_k (\ell-j)\bigr), 
\ee
with some constants $C_k > 0$ and $\delta_k > 0$ not depending on $i,j,\ell\in\N$, and 
observe moreover that for any $k\in\N$, 
\begin{align}
\sup_{i,j,\ell\in\N}  \Delta_1^+\bigl((i,j)\to(k,\ell)\bigr)  &\leq  \sup_{i,j,\ell\in\N}  g\bigl((i,j)\to(k,\ell)\bigr)  + \sup_{i,j,\ell\in\N}  \pi_1(k)\P_{(i,j)}\bigl(\tau^+ = \infty\bigr)/V_1  \nonumber\\ 
&\leq  \sup_{i,j,\ell\in\N}  g\bigl((i,j)\to(k,\ell)\bigr)  +  1/V_1 \nonumber\\ 
&\leq  \widetilde{C}_k +  1/V_1  < \infty. \label{proof_th1_eq4}
\end{align} 
To prove Theorem~\ref{thm:main-2}, we split the right-hand side of \eqref{proof_th1_eq1} into two parts:
\begin{align*}
     \Sigma_1  &=  A_k \sum_{\substack{0\leq i''\leq (k_0-1)\vee k \\j'' > \ell/2}} \exp\bigl(- \widetilde\delta j''\bigr)  \Delta^+_1\bigl((i'',j'')\to(k,\ell)\bigr),\\
     \Sigma_2 &=\Delta^+_1\bigl((i,j)\to(k,\ell)\bigr)  +  A_k \sum_{\substack{0\leq i''\leq (k_0-1)\vee k\\k_0 < j'' \leq \ell/2}} \exp\bigl(- \widetilde\delta j''\bigr)  \Delta^+_1\bigl((i'',j'')\to(k,\ell)\bigr).
\end{align*}
In order to estimate $\Sigma_1$, we use the upper bound \eqref{proof_th1_eq4}:
\begin{equation}
\label{proof_th1_eq5}
     \Sigma_1\leq A_k (\widetilde{C}_k +  1/V_1) \sum_{\substack{0\leq i''\leq(k_0-1)\vee k \\j'' > \ell/2}} \exp\bigl(- \widetilde\delta j''\bigr)  \leq  A_k (\widetilde{C}_k + 1/V_1 )\frac{(k_0-1)\vee k + 1}{1- e^{-\widetilde\delta}}  \exp\bigl(-\widetilde\delta \ell/2\bigr),
\end{equation} 
and to estimate $\Sigma_2$, we use the inequality \eqref{proof_th1_eq3}:
\begin{align}
     \Sigma_2 &\leq  C_{k} \exp\bigl( - \delta_k (\ell-j)\bigr) +  A_k \sum_{\substack{0\leq i''\leq(k_0-1)\vee k\\k_0 < j'' \leq \ell/2}} \exp\bigl(- \widetilde\delta j''\bigr) C_{k} \exp\bigl( - \delta_k (\ell-j'')\bigr)  \nonumber \\ 
&\leq C_{k} \exp\bigl( - \delta_k (\ell-j)\bigr) + A_k C_k \frac{(k_0-1)\vee k + 1}{1- e^{-\widetilde\delta}} \exp\bigl( - \delta_k \ell/2\bigr).\label{proof_th1_eq6} 
\end{align} 
When combined,
\eqref{proof_th1_eq5} and \eqref{proof_th1_eq6} show that for any $k\in\N$, there exist $C_k'>0$ and $\delta_k'> 0$ such that 
\begin{multline*}
     \left\vert{g}\bigl((i,j)\to(k,\ell)\bigr) - \frac{\pi_1(k)}{V_1}\P_{(i,j)}\bigl({\cal N}_1 < \infty\bigr)\right\vert = \left\vert \Delta\bigl((i,j)\to(k,\ell)\bigr)\right\vert  \\ \leq  \Sigma_1 + \Sigma_2 
\leq  C_k' \exp\bigl( - \delta_k'  (\ell-j)\bigr), 
\end{multline*}
and consequently, the first assertion of Theorem~\ref{thm:main-2} holds. The proof of the second assertion of this theorem is entirely similar.

\section{Preliminary results to the proof of Theorem \ref{thm:main-3}}
\label{sec:7}

In this section, our main objective is to introduce (mostly analytic) tools for the proof of Theorem~\ref{thm:main-3} (and Theorem~\ref{thm:main-4}), which will be provided in the following section, Section~\ref{sec:proofs_2}. Contrary to the previous Sections \ref{sec:4} and \ref{sec:5}, where we use purely probabilistic arguments, we move here to an analytic framework: we introduce the generating functions of the Green functions and prove that they satisfy various functional equations, starting from which we will deduce contour integral formulas for the Green functions. Applying asymptotic techniques to these integrals will finally lead to our main results.

\subsection{Functional equations for the generating functions of the Green functions}

We first introduce the kernels:
\begin{equation}
\label{eq:kernels}
     \left\{\begin{array}{rcll}
     Q(x,y,z)&=&x^{k_0}y^{k_0} \bigl(z\sum_{i,j\geq -k_0} \mu(i,j)x^iy^j-1\bigr),&\medskip\\
     q_\ell'(x,y,z)&=&x^{k_0} y^\ell \bigl(z\sum_{i \geq -k_0, j\geq -\ell } \mu_\ell'(i,j)x^iy^j-1\bigr), 
&0\leq \ell\leq k_0-1,\medskip\\
     q_k''(x,y,z)&=&x^k y^{k_0}\bigl(z\sum_{i \geq -k,j \geq -k_0 } \mu_k''(i,j)x^iy^j-1\bigr),
&0\leq k\leq k_0-1,\medskip\\
     q_{k,\ell}(x,y,z)&=&x^k y^\ell  \bigl(z\sum_{i \geq -k, j \geq -\ell} \mu_{k,\ell}(i,j) x^i y^j-1\bigr), & 0\leq k,\ell \leq k_0-1.
     \end{array}\right.
\end{equation}
Each kernel corresponds to a homogeneity domain of Figure \ref{fig:model}. Let also the generating functions of the $z$-Green functions be
\begin{equation}
\label{eq:generating_functions}   
     \left\{\begin{array}{rcll}
     G(x,y,z)&=& \sum_{n \geq 0} \sum_{i,j\geq k_0} \mathbb P_{(i_0,j_0)}\bigl(Z(n) = (i,j)\bigr) x^{i-k_0}y^{j-k_0}  z^n,&\medskip\\
     g_\ell(x,z) &=& \sum_{n \geq 0} \sum_{i\geq k_0}  \mathbb P_{(i_0,j_0)}\bigl(Z(n) = (i,\ell)\bigr)x^{i-k_0}z^n, &\hspace{-30mm}0\leq \ell\leq k_0-1,\medskip\\
     \widetilde{g}_k(y,z) &=& \sum_{n \geq 0}  \sum_{j\geq k_0 }  \mathbb P_{(i_0,j_0)}\bigl(Z(n) = (k,j)\bigr) y^{j-k_0}z^n, &\hspace{-30mm}0\leq k\leq k_0-1,\medskip\\
     f_{i_0,j_0}(x,y,z) &=& \sum_{0 \leq k,\ell\leq k_0-1 } \bigl(\sum_{n \geq 0}  \mathbb P_{(i_0,j_0)}\bigl(Z(n) = (k,\ell)\bigr) z^n \bigr)  q_{k,\ell}(x,y,z)+x^{i_0}y^{j_0}. &
     \end{array}\right.
\end{equation}
They are all well defined when $\vert x\vert<1$, $\vert y\vert<1$ and $\vert z\vert<1$.

\begin{lem}
The following equation holds true, for any $\vert x\vert<1$, $\vert y\vert<1$ and $\vert z\vert<1$:
\begin{equation}
\label{eq:main}
     -Q(x,y,z)G(x,y,z)=\sum_{\ell=0}^{k_0-1}q_\ell'(x,y,z) g_\ell(x,z)+ \sum_{k=0}^{k_0-1} q_k''(x,y,z) \widetilde{g}_k(y,z)+f_{i_0,j_0}(x,y,z).
\end{equation}
\end{lem}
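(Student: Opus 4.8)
The plan is to derive \eqref{eq:main} directly from the forward Kolmogorov (first-step, or rather last-step) decomposition of the $z$-Green function, by carefully collecting the contributions of each homogeneity domain. Concretely, I would start from the standard identity for the $z$-Green function generating series: writing $G_{(i_0,j_0)}(i,j;z) = \sum_{n\geq 0} \mathbb P_{(i_0,j_0)}(Z(n)=(i,j)) z^n$, one has for each $(i,j)$ the renewal-type relation
\begin{equation*}
     G_{(i_0,j_0)}(i,j;z) = \1_{\{(i,j)=(i_0,j_0)\}} + z\sum_{(i',j')} G_{(i_0,j_0)}(i',j';z)\, p\bigl((i',j')\to(i,j)\bigr),
\end{equation*}
valid for $\vert z\vert < 1$ (absolute convergence is guaranteed since the $p^{(n)}$ sum to at most $1$ and $\vert z\vert<1$). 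I would then multiply this identity by $x^{i-k_0}y^{j-k_0}$ and sum over $i,j\geq k_0$, so that the left-hand side becomes $G(x,y,z)$ up to the initial term.

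The core of the argument is bookkeeping on the right-hand side. One splits the sum over the source point $(i',j')$ according to which of the four regions of \eqref{trans_probabilities} it lies in, and uses the spatial homogeneity of the transition probabilities in each region to turn the convolution into a product of a generating function and the step generating function $\sum \mu(\cdot,\cdot)x^iy^j$ (resp.\ $\mu_\ell'$, $\mu_k''$, $\mu_{k,\ell}$). The interior region contributes a term proportional to $G(x,y,z)$ times $\sum_{i,j\geq -k_0}\mu(i,j)x^iy^j$; the two boundary strips contribute terms involving $g_\ell(x,z)$ and $\widetilde g_k(y,z)$; the finite corner contributes the $q_{k,\ell}$ terms bundled into $f_{i_0,j_0}$. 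Assumption~\ref{as1} is exactly what is needed here: the lower bounds on the jumps ensure that when $(i',j')$ lies in a boundary or corner region, the walk cannot land at a point $(i,j)$ with a coordinate below the relevant threshold, so the boundary sums are supported on the correct ranges and the series manipulations (Fubini, re-indexing $i\mapsto i-k_0$ etc.) are legitimate. After moving the interior contribution to the left, one gets $(1 - z\sum\mu(i,j)x^iy^j)\cdot x^{k_0}y^{k_0}G(x,y,z)$ on the left, i.e.\ $-Q(x,y,z)G(x,y,z)$ after multiplying through by $x^{k_0}y^{k_0}$ and using the definition of $Q$ in \eqref{eq:kernels}; the remaining terms reassemble exactly into $\sum_\ell q_\ell' g_\ell + \sum_k q_k''\widetilde g_k + f_{i_0,j_0}$, where the $-1$ inside each kernel $q$ absorbs the ``$-$identity'' part of the renewal equation restricted to that region, and $x^{i_0}y^{j_0}$ in $f_{i_0,j_0}$ is the initial term.

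The main obstacle, and the step deserving the most care, is getting the powers of $x$ and $y$ to match up across regions — i.e.\ verifying that the prefactors $x^{k_0}y^{k_0}$, $x^{k_0}y^\ell$, $x^k y^{k_0}$, $x^k y^\ell$ in \eqref{eq:kernels} are precisely what is forced by the shift between ``source'' and ``target'' indices and by the normalization $x^{i-k_0}y^{j-k_0}$ chosen for $G$. This is purely algebraic but error-prone; one checks it region by region, keeping track of the fact that $q_\ell'$ multiplies $g_\ell(x,z)$, whose variable $x$ is normalized as $x^{i-k_0}$ but whose $y$-height is fixed at $\ell$, hence the asymmetric $x^{k_0}y^\ell$ prefactor, and similarly for $q_k''$. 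The convergence issue — that all four series, and the rearrangement into the stated equation, converge absolutely for $\vert x\vert,\vert y\vert,\vert z\vert<1$ — follows from Assumption~\ref{as2} (giving, in particular, finiteness of the step generating functions near the relevant polydisc) together with $\sum_n p^{(n)}\leq 1$, so this part is routine. I would therefore present the proof as: state the last-step renewal identity, multiply and sum, split into the four regions invoking Assumption~\ref{as1}, and identify each resulting block with the corresponding term of \eqref{eq:main}.
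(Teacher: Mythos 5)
Your overall strategy --- the last-step renewal identity, a split of the source point over the four homogeneity regions, and a reassembly of the blocks into the kernels \eqref{eq:kernels} --- is exactly the classical derivation the paper has in mind (its own proof merely invokes \cite[Lem.~3.16]{KuMa-98} and checks convergence). But there is a genuine misstep in your bookkeeping: you multiply the renewal identity by $x^{i-k_0}y^{j-k_0}$ and sum only over the interior targets $i,j\geq k_0$. With the target range truncated in this way, the convolution does \emph{not} factor as you claim. For an interior source $(i',j')$ with $k_0\leq i'<2k_0$ or $k_0\leq j'<2k_0$ (and likewise for sources in the strips or the corner), part of the step distribution lands at targets outside $\{i\geq k_0,\ j\geq k_0\}$, so the inner sum is a truncated step generating function and not $x^{i'-k_0}y^{j'-k_0}\,z\sum_{i,j\geq-k_0}\mu(i,j)x^iy^j$; hence the interior block is not $z\,G(x,y,z)\sum_{i,j\geq -k_0}\mu(i,j)x^iy^j$, and \eqref{eq:main} does not drop out. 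Relatedly, the ``$-1$'' parts of $q_\ell'$, $q_k''$, $q_{k,\ell}$ --- i.e.\ the terms $-x^{k_0}y^\ell g_\ell(x,z)$, $-x^ky^{k_0}\widetilde g_k(y,z)$, $-x^ky^\ell\sum_n\P_{(i_0,j_0)}(Z(n)=(k,\ell))z^n$ --- cannot arise from a sum restricted to interior targets: in the correct derivation they are precisely the left-hand side contributions of the targets lying in the boundary strips and in the corner, which your summation range excludes (this is the internal inconsistency in your phrase ``the $-$identity part of the renewal equation restricted to that region'').

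The fix is to multiply by $x^iy^j$ and sum the renewal identity over \emph{all} targets $(i,j)\in\N^2$. Assumption \ref{as1} then enters not as a statement about the threshold $k_0$, but as the guarantee that the chain never leaves $\N^2$: for each source, the sum over targets in $\N^2$ is the full bivariate step generating function of its region, so homogeneity factors each block, the interior sources giving $z\,x^{k_0}y^{k_0}G(x,y,z)\sum_{i,j\geq-k_0}\mu(i,j)x^iy^j$, the strips giving $z\,x^{k_0}y^\ell g_\ell(x,z)\sum_{i\geq -k_0,j\geq-\ell}\mu'_\ell(i,j)x^iy^j$ and $z\,x^ky^{k_0}\widetilde g_k(y,z)\sum_{i\geq-k,j\geq-k_0}\mu''_k(i,j)x^iy^j$, and the corner the analogous terms with $\mu_{k,\ell}$. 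The left-hand side is then the full spatial generating function, which decomposes as $x^{k_0}y^{k_0}G(x,y,z)+\sum_{\ell<k_0}x^{k_0}y^\ell g_\ell(x,z)+\sum_{k<k_0}x^ky^{k_0}\widetilde g_k(y,z)+\sum_{k,\ell<k_0}\bigl(\sum_n\P_{(i_0,j_0)}(Z(n)=(k,\ell))z^n\bigr)x^ky^\ell$; moving it to the right supplies exactly the ``$-1$'' parts of $Q$, $q'_\ell$, $q''_k$, $q_{k,\ell}$ and yields \eqref{eq:main}. As a minor point, for $\vert x\vert,\vert y\vert,\vert z\vert<1$ the absolute convergence justifying Fubini already follows from the measures being probabilities with jumps bounded below by $-k_0$ together with $p^{(n)}\leq 1$; Assumption \ref{as2} is not needed in this lemma.
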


\begin{proof}
The generating functions are clearly convergent for $x,y,z$ less than $1$ in modulus, as the coefficients  $ \mathbb P_{(i_0,j_0)}\bigl(Z(n) = (i,j)\bigr)=p^{(n)} \bigl((i_0, j_0) \to (i,j)\bigr)$ are smaller than $1$. In the case of nearest-neighbor random walks, the functional equation \eqref{eq:main} has been obtained in \cite[Lem.~3.16]{KuMa-98}. All generating functions remain convergent in the case of larger steps, and the proof of \eqref{eq:main} follows exactly the same line in this generalized framework.
\end{proof} 

Let now 
\begin{equation*}
     Q(x,y)=Q(x,y,1),\ q'_\ell(x,y)=q'_\ell(x,y,1),\ q''_k(x,y)=q''_k(x,y,1),\ q_{k,\ell}(x,y)=q_{k,\ell}(x,y,1)
\end{equation*}
be the evaluations of the kernels \eqref{eq:kernels} at $z=1$. Clearly, with our notations of Assumptions \ref{as1} and \ref{as2}, for any $x>0$ and $y>0$,
\begin{equation*}
     \left\{\begin{array}{rcll}
     Q(x,y)&=&x^{k_0}y^{k_0}\bigl(\Phi_{k_0, k_0}(\log x, \log y)-1\bigr),&\medskip\\
     q'_\ell (x,y)&=&x^{k_0} y^{\ell}\bigl(\Phi_{k_0, \ell}(\log x, \log y)-1\bigr), 
&0\leq \ell\leq k_0-1,\medskip\\
     q''_k(x,y)&=&x^{k} y^{k_0}\bigl(\Phi_{k, k_0}(\log x, \log y)-1\bigr),
&0\leq k\leq k_0-1,\medskip\\
     q_{k, \ell}(x,y)&=&x^k y^\ell\bigl(\Phi_{k, \ell}(\log x, \log y)-1\bigr), & 0\leq k,\ell \leq k_0-1.
     \end{array}\right.
\end{equation*}
The generating functions of the Green functions are
\begin{equation}
\label{eq:generating_functions_Green}   
     \left\{\begin{array}{rcll}
     G(x,y) &=&  \sum_{i,j\geq k_0} g \bigl((i_0, j_0) \to (i,j)\bigr)x^{i-k_0}y^{j-k_0},&\medskip\\
     g_\ell(x) &=&   \sum_{i\geq k_0} g\bigl((i_0, j_0) \to (i,\ell)\bigr)x^{i-k_0}, & \hspace{-20mm}0\leq \ell\leq k_0-1,\medskip\\
     \widetilde g_k(y) &=&  \sum_{j\geq k_0 } g \bigl((i_0, j_0) \to (k,j)\bigr)y^{j-k_0},& \hspace{-20mm}0\leq k\leq k_0-1,\medskip\\
     f_{i_0,j_0}(x,y) &=&  \sum_{0 \leq k,\ell\leq k_0-1 }  g \bigl((i_0,j_0) \to (k,\ell)\bigr) q_{k,\ell}(x,y)+x^{i_0}y^{j_0}. &
\end{array}\right.
\end{equation}
The generating functions $g_\ell$ and $\widetilde g_k$ in \eqref{eq:generating_functions_Green} are strongly related to ${\cal G}_{(i,j)\to (\cdot,\ell)}$ and ${\cal G}_{(i,j)\to (k, \cdot)}$ in \eqref{eq:GF_kcdot} and \eqref{eq:GF_lcdot}. More specifically, they just differ by polynomial terms; for example,
\begin{equation}
\label{eq:relation_GG}
     {\cal G}_{(i,j)\to (k, \cdot)}(y)=y^{k_0}\widetilde g_k(y) + \sum_{0\leq j<k_0} g\bigl((i_0, j_0) \to (k,j)\bigr)y^{j}.
\end{equation}

\begin{lem}
\label{lem:eq_func_GF}
All generating functions \eqref{eq:generating_functions_Green} are absolutely convergent on the bidisk $\{(x,y)\in\mathbb C^2: \vert x\vert  <1, \vert y\vert<1\}$, where they satisfy the functional equation:
\begin{equation}
\label{eq:main_func_eq}
     -Q(x,y) G(x,y)=\sum_{\ell=0}^{k_0-1}q_\ell'(x,y) g_\ell(x)+ \sum_{k=0}^{k_0-1} q_k''(x,y) \widetilde g_k(y)+f_{i_0,j_0}(x,y).
\end{equation}
\end{lem}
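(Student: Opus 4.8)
The plan is to obtain the functional equation \eqref{eq:main_func_eq} directly from equation \eqref{eq:main}, which has already been established at the level of $z$-Green functions, by taking the limit $z \uparrow 1$. First I would observe that the coefficients of all the generating functions in \eqref{eq:generating_functions_Green} are the $z=1$ evaluations of the coefficients of the generating functions in \eqref{eq:generating_functions}; more precisely, for fixed $(i,j)$, the series $\sum_{n\geq 0}\mathbb P_{(i_0,j_0)}(Z(n)=(i,j))z^n$ has non-negative coefficients and, by Abel's theorem, converges as $z\uparrow 1$ to $g((i_0,j_0)\to(i,j))$, which is finite by transience (guaranteed under Assumptions \ref{as1}--\ref{as5}, via Proposition~\ref{sec4_lem1}). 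Hence, for every fixed $(i,j)$, $g^{(z)}((i_0,j_0)\to(i,j)) \uparrow g((i_0,j_0)\to(i,j))$ monotonically.

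The key step is then absolute convergence of the bivariate series \eqref{eq:generating_functions_Green} on the open bidisk. Fix $(x,y)$ with $|x|<1$ and $|y|<1$. Since each coefficient $g((i_0,j_0)\to(i,j))$ is the increasing limit of $g^{(z)}((i_0,j_0)\to(i,j))$, and since by monotone convergence (or directly, since all terms are non-negative)
\begin{equation*}
     \sum_{i,j\geq k_0} g\bigl((i_0,j_0)\to(i,j)\bigr)|x|^{i-k_0}|y|^{j-k_0} = \lim_{z\uparrow 1}\sum_{i,j\geq k_0} g^{(z)}\bigl((i_0,j_0)\to(i,j)\bigr)|x|^{i-k_0}|y|^{j-k_0},
\end{equation*}
it suffices to bound the right-hand side uniformly in $z<1$. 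For this I would use the elementary bound $g^{(z)}((i_0,j_0)\to(i,j)) \leq g^{(z)}((i,j)\to(i,j)) \leq g((i,j)\to(i,j))$, together with the fact that $\sup_{i,j} g((i,j)\to(i,j)) < \infty$; the latter follows from Proposition~\ref{sec4_lem1} (or more simply from a geometric-series argument as in the proof of that proposition, since the return probability to $(i,j)$ is bounded away from $1$ uniformly, at least for $i,j$ large, and only finitely many exceptional states remain). Actually, to be safe, one can instead invoke Proposition~\ref{sec4_lem1} directly in the form $\sup_{i,j,\ell} g((i,j)\to(k,\ell)) < \infty$ for each fixed $k$, which handles $g_\ell$, $\widetilde g_k$ and $f_{i_0,j_0}$ at once; for $G(x,y)$ one combines this with the geometric decay of $|x|^{i-k_0}|y|^{j-k_0}$. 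Either way, the majorant is a finite constant times $\sum_{i,j\geq 0}|x|^i|y|^j = (1-|x|)^{-1}(1-|y|)^{-1} < \infty$, giving absolute convergence.

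Once absolute convergence is known, the functional equation \eqref{eq:main_func_eq} is obtained by passing to the limit $z\uparrow 1$ in \eqref{eq:main}. On the left, $-Q(x,y,z)G(x,y,z) \to -Q(x,y,1)G(x,y,1) = -Q(x,y)G(x,y)$, using that $G(x,y,z)\to G(x,y)$ (again by monotone/dominated convergence, the dominating series being the $z=1$ one just shown to converge) and that $Q(x,y,z)$ is a polynomial in $z$. On the right, each term $q'_\ell(x,y,z)g_\ell(x,z)$, $q''_k(x,y,z)\widetilde g_k(y,z)$, and $f_{i_0,j_0}(x,y,z)$ converges similarly to its $z=1$ counterpart, since the kernels in \eqref{eq:kernels} are polynomials in $z$ and the generating functions converge coefficientwise with a convergent dominating series. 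I do not expect a serious obstacle here; the only point requiring care is the uniform-in-$z$ domination needed to justify interchanging the limit with the infinite sums, and this is exactly what the uniform Green-function bound of Proposition~\ref{sec4_lem1} provides. The main (very mild) obstacle is therefore purely bookkeeping: making sure the dominating series is the same one on both sides and genuinely finite on the open bidisk, which the geometric factor $|x|^{i-k_0}|y|^{j-k_0}$ secures.
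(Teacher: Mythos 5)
Your overall strategy (pass to the limit $z\uparrow 1$ in \eqref{eq:main} using Abel/monotone convergence, with Proposition~\ref{sec4_lem1} controlling the boundary generating functions $g_\ell$, $\widetilde g_k$ and $f_{i_0,j_0}$) is the same as the paper's, and that part of your argument is fine. The gap is in the step you yourself flag as the key one: the absolute convergence of the interior series $G(x,y)$ on the bidisk. You dominate $g\bigl((i_0,j_0)\to(i,j)\bigr)$ by $g\bigl((i,j)\to(i,j)\bigr)$ and assert $\sup_{i,j}g\bigl((i,j)\to(i,j)\bigr)<\infty$, claiming this follows from Proposition~\ref{sec4_lem1} or from "the return probability being uniformly bounded away from $1$ for $i,j$ large". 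Neither is available: Proposition~\ref{sec4_lem1} bounds $g\bigl((i,j)\to(k,\ell)\bigr)$ only for a target whose \emph{first} (resp.\ second) coordinate $k$ (resp.\ $\ell$) is \emph{fixed}, with a constant $\widetilde C_k$ (resp.\ $C'_\ell$) depending on that coordinate in an uncontrolled way; its proof gives $\P_{(k,0)}\bigl(\tau_1(k,0)<\infty\bigr)<1$ for each $k$ but no uniformity in $k$. Your fallback ("invoke Proposition~\ref{sec4_lem1} for each fixed $k$ and combine with the geometric decay") has the same defect: to sum $\sum_k \widetilde C_k\vert x\vert^k$ you would need $\widetilde C_k$ to grow sub-exponentially, which is nowhere established. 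So the dominating series you propose is not known to be finite, and this is precisely the "first non-obvious estimate" the paper warns about after stating the lemma.

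The paper circumvents the problem without ever estimating the interior Green function directly. It first gets convergence of the right-hand side of \eqref{eq:main} as $z\to1$ (Abel's theorem plus Proposition~\ref{sec4_lem1}, exactly as you do), then observes that consequently $\lim_{z\to1}G(x,y,z)$ exists at every $(x,y)$ with $\vert x\vert,\vert y\vert<1$ and $Q(x,y,1)\neq0$, simply by dividing the functional equation by $-Q(x,y,z)$. For real $(x,y)\in(0,1)^2$ the series $G(x,y,z)$ has non-negative coefficients in $z$, so existence of the limit forces convergence of $G(x,y,1)$ at every real point of the dense set where $Q(x,y,1)\neq0$; since $G(x,y,1)$ is a double series with non-negative coefficients, convergence at such points implies absolute convergence on the whole open bidisk, and a final application of Abel's theorem identifies the limit of the left-hand side with $-Q(x,y)G(x,y)$. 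If you want to keep your direct-domination route, you would have to prove a genuinely new uniform (or at least sub-exponential in $i+j$) bound on $g\bigl((i_0,j_0)\to(i,j)\bigr)$ over both coordinates — e.g.\ a uniform-in-$k$ version of \eqref{sec4_lem1_eq3} — which is extra work the paper's indirect argument avoids.
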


Although Equation \eqref{eq:main_func_eq} appears formally as the evaluation of \eqref{eq:main} at $z=1$, its proof needs some care, as it is not at all clear a priori that the generating function $G(x,y)$ converges for $\vert x\vert<1$ and $\vert y\vert<1$. (This convergence actually constitutes a first non-obvious estimate.)

\begin{proof}[Proof of Lemma \ref{lem:eq_func_GF}]
The series $\sum_{n \geq 0} p^{(n)} \bigl((i_0,j_0) \to (k,\ell)\bigr) $ is convergent to $g\bigl((i_0,j_0) \to (k,\ell)\bigr)$, and by Proposition \ref{sec4_lem1},
\begin{equation*}
     \sup_{\substack{i \geq k_0\\\ell\in\{0,\ldots,k_0-1\}}}  g\bigl((i_0,j_0) \to (i,\ell)\bigr)<\infty 
     \quad\text{and}\quad
     \sup_{\substack{j \geq k_0\\k\in\{0,\ldots, k_0-1\}}}  g\bigl((i_0,j_0) \to (k,j)\bigr)<\infty.
\end{equation*}
Accordingly, the series $g_\ell(x,1)$ and $\widetilde g_k(1,y)$ are absolutely convergent respectively for any $x$ and $y$ with $\vert x\vert<1$ and $\vert y\vert<1$. Then, by Abel's theorem on power series, the limit of the right-hand side of \eqref{eq:main} as $z\to 1$ exists and equals the right-hand side of \eqref{eq:main_func_eq}.

As a consequence, the limit of the left-hand side of \eqref{eq:main} does exist as well, so that $\lim_{z\to 1} G(x,y,z)$ exists  for any pair $(x,y)$ in the set
\begin{equation}
\label{eq:set_CV}
     \{(x, y) \in \mathbb C^2: \vert x\vert <1,\vert y\vert<1,  Q(x,y,1) \neq  0\}.
\end{equation}
Furthermore, for any $(x,y) \in (0,1)^{2}$, the series $G(x, y,z)$ of $z$ has real non-negative coefficients. It follows that $G(x,y,1)$ converges for any real $(x, y)$ from the set \eqref{eq:set_CV}, which is dense in $(0,1)^{2}$. Therefore $G(x,y,1)$ is absolutely convergent for any complex $(x,y)$ with $\vert x\vert ,\vert y\vert <1$ and again by Abel's theorem, $\lim_{z\to 1} G(x,y,z)=G(x,y,1)$. Hence the limit of the left-hand side of \eqref{eq:main} as $z\to 1$ exists and equals \eqref{eq:main_func_eq}. Equation~\eqref{eq:main_func_eq} is proved.
\end{proof}


\subsection{Preliminary results on the zero-set $Q(x,y)=0$}
We gather in a single lemma several statements on the zeros of the two-variable polynomial $Q(x,y)$ and on the one-variable polynomials $Q(x,1)$ and $Q(1,y)$. The last ones may be interpreted as kernels of one-dimensional random walks (and a large literature exists around this type of models). These statements will be used in the proof of Theorem~\ref{thm:main-3}.

Although, under our standing assumptions, the equation $y\mapsto Q(x,y)=0$ ($x$ being fixed) has in general an infinite number of solutions, two roots play a very special role and carry  
most of the probabilistic information about the model. They will be denoted $Y_0(x)$ and $Y_1(x)$.

\begin{lem}
\label{lem:new:add}
Under Assumptions \ref{as1}--\ref{as6}, \ref{as2pp} and \ref{as2ppp},
the following assertions hold:
\begin{enumerate}
     \item\label{it1:lem:new:add}For $\vert x \vert =1$ and $\vert y \vert =1$ with $(x,y)\ne(1,1)$, we have $Q(x,y)\neq0$.

     \item\label{it2:lem:new:add}There exists a neighborhood $O_\delta(1)=\{x\in\mathbb C: \vert x-1\vert <\delta \} $ of $1$, with $\delta>0$ small enough, inside of which there exists a unique
function $Y_0$ analytic in $O_\delta(1)$, which satisfies $Y_0(1)=1$ and $Q(x, Y_0(x))=0$ for all $x\in O_\delta(1)$. The function $Y_0$ is one-to-one from $O_\delta(1)$ onto $U_\delta(1)=Y_0(O_\delta(1))$, which is a neighborhood of $1$.

     \item\label{it3:lem:new:add}For $\epsilon>0$ small enough, $Y_0(1+\epsilon)<1$.

     \item\label{it4:lem:new:add}There exists a domain (see Figure \ref{fig:neigh1})
\begin{equation}
\label{eq:def_V}
     V= \{(1-\epsilon)+\epsilon  e^{i \phi}:  \epsilon \in (0, \epsilon_0], \phi \in [-\phi_0, \phi_0] \}
\end{equation}
such that $V \subset O_\delta(1)$, and for any $x \in V$, we have $\vert x \vert< 1$ and $\vert Y_0(x) \vert <1$.

\item\label{it5:lem:new:add}On $(0,\infty)$, the function $Q(1,y)$ admits exactly two zeros, at $y_0=1$ and $y_1>1$. Moreover, for any $y$ with $1<\vert y \vert <y_1$, $Q(1,y)\ne 0$.
      
\item\label{it6:lem:new:add}For small $\epsilon>0$, the function $Q(1+\epsilon,y)$ has exactly two zeros on $(0,\infty)$. One of them is $Y_0(1+\epsilon)$, as defined in \ref{it2:lem:new:add} and \ref{it3:lem:new:add}. The other zero is called $Y_1(1+\epsilon)$. We have $Y_0(1+\epsilon)<Y_1(1+\epsilon)$. Furthermore, for any $y$ with $Y_0(1+\epsilon)<\vert y \vert <Y_1(1+\epsilon)$, we have $Q(1+\epsilon,y)\ne 0$.      
      
\item\label{it7:lem:new:add}For $\epsilon>0$ small enough, any $x$ with $\vert x \vert =1+\epsilon$ and any $y$ with $Y_0(1+\epsilon)<\vert y \vert <Y_1(1+\epsilon)$, we have $Q(1+\epsilon,y)\ne 0$.    
    
\item\label{it8:lem:new:add}For any $\epsilon>0$ small enough and any $t>0$,
\begin{equation*}
     (1+\epsilon) Y_1(1+\epsilon)^t >\min \{ x_1, y_1^t\},
\end{equation*}
where $x_1$ is defined symmetrically as $y_1$ in \ref{it5:lem:new:add}, but in the $x$-variable.
\end{enumerate}
\end{lem}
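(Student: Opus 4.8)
Throughout I work under the standing Assumptions~\ref{as1}--\ref{as5} and write $\varphi(x,y)=\sum_{i,j\geq-k_0}\mu(i,j)x^iy^j$, so that $Q(x,y)=x^{k_0}y^{k_0}(\varphi(x,y)-1)$ and, away from the coordinate axes, $Q(x,y)=0$ is equivalent to $\varphi(x,y)=1$. By Assumption~\ref{as2}, $\varphi$ converges on a neighbourhood of the closed unit bidisk; its coefficients are non-negative with sum $\varphi(1,1)=1$, and by Assumption~\ref{as4} one computes $\partial_xQ(1,1)=m_1<0$ and $\partial_yQ(1,1)=m_2<0$. For \ref{it1:lem:new:add} the plan is the triangle inequality $|\varphi(x,y)|\leq\varphi(|x|,|y|)$: for $|x|=|y|=1$ it gives $|\varphi(x,y)|\leq1$, and $\varphi(x,y)=1$ can hold only if $x^iy^j=1$ for every $(i,j)$ in the support of $\mu$; writing $x=e^{\mathrm{i}\alpha}$, $y=e^{\mathrm{i}\beta}$ and using that this support generates $\Z^2$ by irreducibility (Assumption~\ref{as3}), one forces $\alpha,\beta\in2\pi\Z$, i.e.\ $(x,y)=(1,1)$, a contradiction.

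For \ref{it2:lem:new:add}--\ref{it4:lem:new:add} the tool is the holomorphic implicit function theorem at $(1,1)$: since $\partial_yQ(1,1)=m_2\neq0$, there is a unique analytic germ $Y_0$ with $Y_0(1)=1$ and $Q(x,Y_0(x))\equiv0$, and implicit differentiation gives $Y_0'(1)=-m_1/m_2<0$. As $Y_0'(1)\neq0$, the inverse function theorem shows that, after shrinking $\delta$, $Y_0$ maps $O_\delta(1)$ biholomorphically onto the open neighbourhood $U_\delta(1)=Y_0(O_\delta(1))$ of $1$, which is \ref{it2:lem:new:add}; since $Q$ has real coefficients, uniqueness forces $\overline{Y_0(x)}=Y_0(\overline x)$, hence $Y_0$ is real on $(1-\delta,1+\delta)$ and $Y_0(1+\epsilon)=1+Y_0'(1)\epsilon+O(\epsilon^2)<1$ for small $\epsilon>0$, which is \ref{it3:lem:new:add}. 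For \ref{it4:lem:new:add} I would substitute $x=(1-\epsilon)+\epsilon e^{\mathrm{i}\phi}$ into the elementary identity $|x|^2=1-2\epsilon(1-\epsilon)(1-\cos\phi)$ and into the first-order expansion of $Y_0$ at $1$, so that the value (and sign) of $Y_0'(1)$ pins down the location of $(x,Y_0(x))$ relative to the unit bidisk; choosing $\epsilon_0,\phi_0$ small then yields the asserted inclusion for $V$.

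Assertions \ref{it5:lem:new:add}--\ref{it7:lem:new:add} rest on convexity. For fixed $r>0$, the map $t\mapsto\varphi(r,e^t)$ is a positively weighted sum of exponentials, hence convex, and since $\mu$ charges jumps with positive and with negative second coordinate (irreducibility together with $m_2<0$), it tends to $+\infty$ as $t\to\pm\infty$ and has a unique minimum. At $r=1$ it equals $1$ at $t=0$ with derivative $m_2<0$, so it re-attains the value $1$ at exactly one $y_1>1$, stays strictly $<1$ on $(1,y_1)$ and $>1$ on $(0,1)$; combined with the triangle inequality this gives \ref{it5:lem:new:add}. For \ref{it6:lem:new:add} I would perturb $r$ from $1$ to $1+\epsilon$: the minimum of $\varphi(1+\epsilon,\cdot)$ stays $<1$, so there are exactly two positive roots, the smaller being $Y_0(1+\epsilon)$ (by \ref{it3:lem:new:add} and continuity of $Y_0$), the larger being named $Y_1(1+\epsilon)$ (close to $y_1$); strict convexity forces $\varphi(1+\epsilon,r)<1$ on $(Y_0(1+\epsilon),Y_1(1+\epsilon))$, whence $|\varphi(1+\epsilon,y)|\leq\varphi(1+\epsilon,|y|)<1$ on the corresponding annulus. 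Finally, \ref{it7:lem:new:add} is immediate: for $|x|=1+\epsilon$ and $Y_0(1+\epsilon)<|y|<Y_1(1+\epsilon)$ one has $|\varphi(x,y)|\leq\varphi(|x|,|y|)=\varphi(1+\epsilon,|y|)<1$, so $Q(x,y)\neq0$.

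The core difficulty is \ref{it8:lem:new:add}. I would pass to logarithmic coordinates $\psi(s,t)=\log\varphi(e^s,e^t)$, which is strictly convex because the support of $\mu$ is not contained in a line (by irreducibility); thus $\Gamma=\{\psi=0\}$ is a smooth strictly convex curve through $O=(0,0)$, $A=(\log x_1,0)$ and $B=(0,\log y_1)$. Taking logarithms, \ref{it8:lem:new:add} reads $\log(1+\epsilon)+t\log Y_1(1+\epsilon)>\min\{\log x_1,\,t\log y_1\}$ for all $t>0$; since $Y_1(1+\epsilon)\to y_1>1$ one has $\log Y_1(1+\epsilon)>0$ for small $\epsilon$, which settles $t$ near $0$ and $t\to\infty$, while the remaining (worst) case $t=t_0:=\log x_1/\log y_1$ gives $\log(1+\epsilon)>t_0\log(y_1/Y_1(1+\epsilon))$; expanding as $\epsilon\to0$, this whole family of inequalities is equivalent to the single scalar condition $y_1+t_0Y_1'(1)>0$. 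To prove the latter I would note that $Y_1'(1)=-\partial_x\varphi(1,y_1)/\partial_y\varphi(1,y_1)$ equals $y_1$ times the slope of $\Gamma$ at $B$ in logarithmic coordinates; the tangent to $\Gamma$ at $B$ is a supporting line of the convex set $\{\psi\leq0\}$, and since $A\in\{\psi\leq0\}$ and strict convexity forbids $A$ from lying on that tangent, this slope strictly exceeds that of the chord $BA$, namely $-1/t_0$. This is precisely $y_1+t_0Y_1'(1)>0$, and continuity then yields \ref{it8:lem:new:add} for all small $\epsilon>0$. The two points needing care here are the uniform-in-$t$ reduction to a scalar inequality and the use of strict (rather than merely weak) convexity, which is what upgrades the supporting-line bound from $\geq$ to $>$.
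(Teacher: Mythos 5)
Items \ref{it1:lem:new:add}--\ref{it3:lem:new:add} and \ref{it5:lem:new:add}--\ref{it7:lem:new:add} of your proposal are essentially the paper's arguments (triangle inequality plus irreducibility; implicit/inverse function theorem at $(1,1)$; convexity of $y\mapsto Q(1+\epsilon,y)$ on the positive half-line plus the triangle inequality). Your treatment of \ref{it8:lem:new:add} is a genuinely different and more elementary route: the paper proves the uniform bound \eqref{uveq} on the whole arc $\mathcal P$ by counting the critical points of $u+tv$ on $\mathcal Q$ via Hennequin's theorem that the normalized gradient map is a diffeomorphism onto the circle, whereas you reduce the family over $t>0$ exactly to the single inequality at $t=t_0$ and then prove $y_1+t_0Y_1'(1)>0$ by a supporting-line argument at $B=(0,\log y_1)$. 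This works: if equality held in the gradient inequality, convexity would force $\psi\equiv 0$ on the segment $[A,B]$, contradicting the strict convexity of the one-dimensional restriction of $\psi$ to that segment (its second derivative is a tilted variance of $u_0X-v_0Y$, positive because the support of $\mu$ is not contained in an affine line). So your \ref{it8:lem:new:add} buys independence from Hennequin's result at the cost of a slightly more careful convexity discussion.

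The genuine gap is item \ref{it4:lem:new:add}. A first-order expansion of $Y_0$ at $1$ cannot decide whether $\vert Y_0(x)\vert<1$, and the sign of $Y_0'(1)$ alone points the wrong way: with $a=Y_0'(1)=-m_1/m_2<0$, for $x$ approaching $1$ transversally from inside the unit disk (so $\mathrm{Re}(x-1)<0$ dominant) one has $\vert 1+a(x-1)\vert^2=1+2a\,\mathrm{Re}(x-1)+a^2\vert x-1\vert^2>1$, i.e.\ to first order $\vert Y_0(x)\vert>1$; and along directions tangent to the unit circle, $x-1$ is purely imaginary to first order, so the linear term contributes nothing to $\vert Y_0(x)\vert-1$ and the decision is made by second-order (curvature) terms. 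This is exactly where the paper works: writing $2b=Y_0''(1)$ and moving along the circles $x=\epsilon+(1-\epsilon)e^{i\phi}$, it obtains the expansion \eqref{eq:expansion_circles_0}, so everything hinges on the strict inequality $a^2-a-2b<0$, proved by computing $Y_0''(1)$ explicitly (Lemma \ref{lem:Y_0'(1)_Y_0''(1)}) and applying Cauchy--Schwarz to $X/\mathbb{E}X$ and $Y/\mathbb{E}Y$. This second-order input is entirely absent from your proposal, and without it the claim ``the value (and sign) of $Y_0'(1)$ pins down the location of $(x,Y_0(x))$ relative to the unit bidisk'' is false. Incidentally, the curvature of the chosen circles matters: on the small circles $x=(1-\epsilon)+\epsilon e^{i\phi}$ written in \eqref{eq:def_V}, the same second-order computation gives $\vert Y_0(x)\vert^2=1+\epsilon\bigl(-a+\epsilon(a^2-2b)\bigr)\phi^2+o(\phi^2)>1$ for small $\epsilon$, so one must use circles of radius close to $1$ (tangent to the unit circle at $1$), as in the paper's proof and Figure \ref{fig:neigh1}; this also illustrates why no first-order argument can settle \ref{it4:lem:new:add}.
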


As stated, Lemma \ref{lem:new:add} concerns the functions $Q(1,y)$, $Y_0(x)$ and $Y_1(x)$. Obviously, symmetric statements hold for $Q(x,1)$, $X_0(y)$ and $X_1(y)$.

\begin{figure}[ht!]
\includegraphics[width=0.4\textwidth]{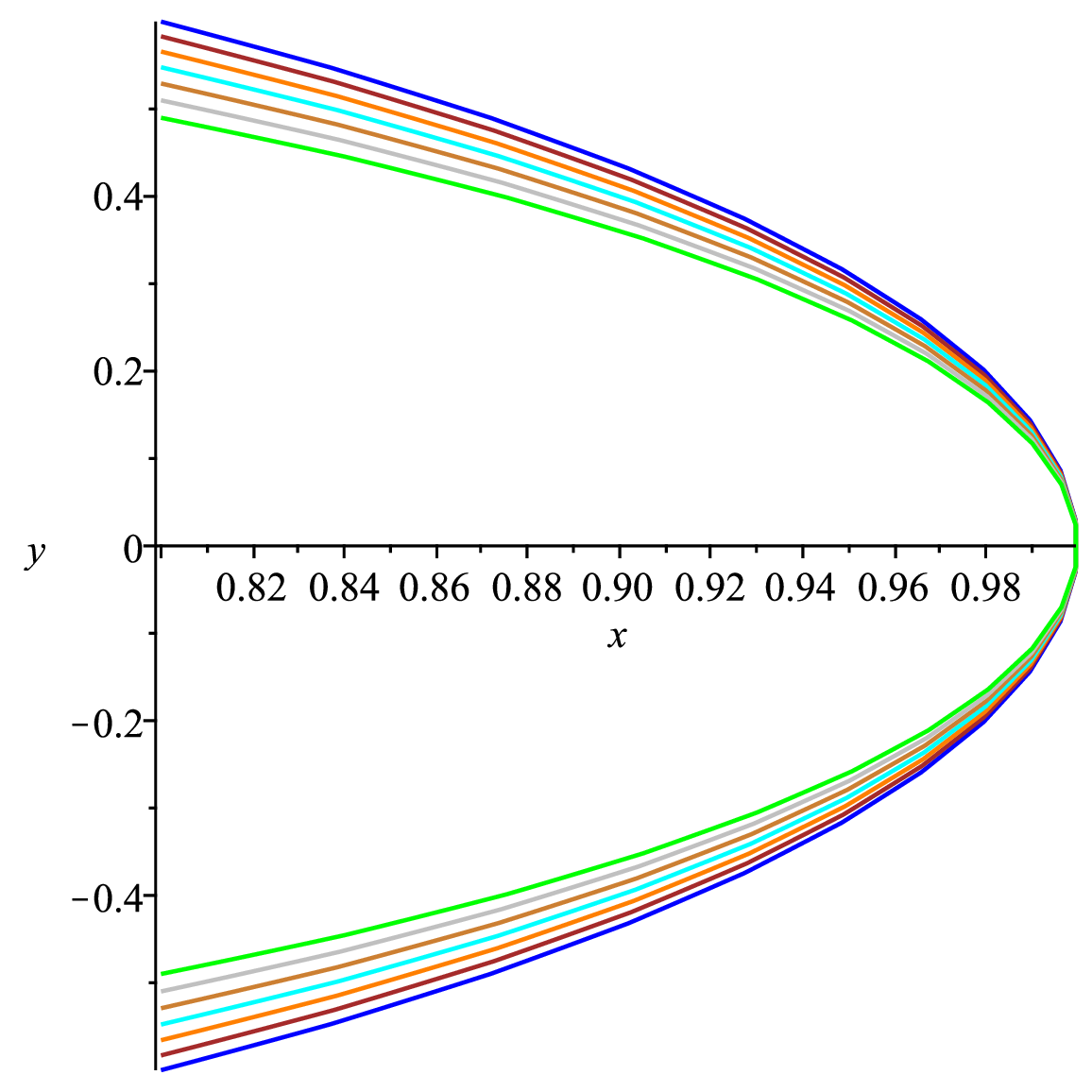}
\caption{The unit circle is in blue (the rightmost curve), and the other circles are tangent to it at $1$. These domains appear in the proof of item \ref{it4:lem:new:add} of Lemma \ref{lem:new:add}.}
\label{fig:neigh1}
\end{figure}

\begin{proof}[Proof of Lemma \ref{lem:new:add}]
Item \ref{it1:lem:new:add} will be the topic of the separate Lemma \ref{lem:poly_pos}, so we start with the proof of \ref{it2:lem:new:add}. It readily follows from the analytic implicit function theorem applied to the function $(x,y)\mapsto Q(x,y)$ in the neighborhood of $(1,1)$, noticing that $Q(1,1)=0$ and $\partial_2Q(1,1)=\sum_{i,j}j\mu(i,j)<0$ (by assumption). Similarly, \ref{it3:lem:new:add} follows from the (real version of) the implicit function theorem, using once again that $\partial_2Q(1,1)<0$.

\medskip

Before starting the proof of \ref{it4:lem:new:add}, we need the preliminary series expansion \eqref{eq:expansion_circles_0} below, which describes the behaviour of the modulus of $Y_{0}(x)$ as $x$ lies on a circle tangent to the unit circle, but with a smaller radius (see Figure \ref{fig:neigh1}). First of all, take the notation $a=Y_{0}'(1)$ and $2b=Y_{0}''(1)$. Then obviously
\begin{equation}
\label{eq:expansion_Y_0}
     Y_{0}(x) =1 + a(x-1)+b(x-1)^2+o(x-1)^2, \quad x \to 1.
\end{equation}
A few standard computations yield that for any $\epsilon\in[0,1]$, one has
\begin{equation}
\label{eq:expansion_circles_0}
     \vert Y_{0}(\epsilon +(1-\epsilon)e^{i\phi})\vert =1+\frac{1-\epsilon}{2}(a^2-a-2b +\epsilon( 2b-a^2))\phi^2+o(\phi^2),\quad \phi \to 0. 
\end{equation}
The values of $a$ and $b$ are computed below, in Lemma \ref{lem:Y_0'(1)_Y_0''(1)}, in terms of the first and second moments of the distribution $\mu$.

For later use, let us first show that $a^2-a-2b<0$. Using the explicit expressions for $a$ and $b$ given in Lemma \ref{lem:Y_0'(1)_Y_0''(1)},
\begin{equation}
\label{eq:comput_aab}
     a^2-a-2b=Y_{0}'(1)^2-Y_{0}'(1)-Y_{0}''(1)=\frac{(\mathbb EX)^2\mathbb E(Y^2)-2\mathbb EX\mathbb E(XY)\mathbb EY+\mathbb E(X^2)(\mathbb EY)^2}{(\mathbb EY)^3}.
\end{equation}
Since $\mathbb EY<0$, the denominator of \eqref{eq:comput_aab} is negative, and it is enough to prove that the numerator of \eqref{eq:comput_aab} is positive, namely,
\begin{equation}
     (\mathbb EX)^2\mathbb E(Y^2)-2\mathbb EX\mathbb EXY\mathbb EY+\mathbb E(X^2)(\mathbb EY)^2>0.
\end{equation}
As it turns out, the above inequality is a straightforward consequence of Cauchy-Schwarz inequality applied to the random variables $\frac{X}{\mathbb EX}$ and $\frac{Y}{\mathbb EY}$.

In order to construct the neighborhood $V$ in \eqref{eq:def_V}, we shall use the estimate \eqref{eq:expansion_circles_0}, as follows. Let us first observe that for $\epsilon\in[0,1]$ small enough (say $\epsilon\in[0,\epsilon_0]$), one has $a^2-a-2b +\epsilon( 2b-a^2)<0$ (indeed, $a^2-a-2b<0$, see above). One can also make the term $o(\phi^2)$ in \eqref{eq:expansion_circles_0} uniform in $\epsilon\in[0,\epsilon_0]$, as the point $x=1$ is regular for the function $Y_{0}(x)$ (and its derivatives). It follows that there exists a value $\phi_0>0$ such that for all $\phi\in(-\phi_0,\phi_0)\setminus\{0\}$ and all $\epsilon\in[0,\epsilon_0]$, $\vert Y_{0}(\epsilon +(1-\epsilon)e^{i\phi})\vert<1$. In conclusion, the neighborhood $V$ may be taken as the union of all these small arcs of circle as in \eqref{eq:def_V} (see also Figure~\ref{fig:neigh1}). Item \ref{it4:lem:new:add} is proved.

\medskip

We now prove \ref{it5:lem:new:add}.
We have $Q(1,y)=y^{k_0}(P(y)-1)$, where
\begin{equation}
\label{eq:def_P}
     P(y)=\sum_{j=-k_0}^\infty \mu(-,j)y^j,
\end{equation}
$\mu(-,j)$ denoting the second marginal of $\mu$.
By our main assumptions, the series $Q(1,y)$ has a radius of convergence $R\in(1,\infty]$. The function $P(y)$ in \eqref{eq:def_P} is well defined on $(0,R)$ and is strictly convex. Furthermore, one has $\lim_{y\to0+}P(y)=+\infty$. There exists a unique $\tau\in(0,R)$ such that $P$ is (strictly) decreasing on $(0,\tau)$ and (strictly) increasing on $(\tau,R)$; $\tau$ is called the structural constant, and $\rho=1/P(\tau)$ is called the structural radius (see \cite[Lem.~1]{BaFl-02}). One has $P'(\tau)=0$. In case of a negative drift, one has $\tau>1$, since 
\begin{equation*}
     P'(1) = \sum_{j=-k_0}^\infty j\mu(-,j)=\sum_{i,j=-k_0}^\infty j\mu(i,j)<0.
\end{equation*}
By Assumption \ref{as2pp}, 
one has $Q(1,R)\in(0,\infty]$, so that there exists $y_1 \in (\tau, R)$ such that $P(y_1)=0$. Furthermore, $P(y)\in(0,1)$ for any $y \in (1, y_1)$. 

A general fact about Laurent polynomials with non-negative coefficients enters the game: $\vert P(y)\vert \leq P(\vert y\vert)$. The inequalities $P(y)\in(0,1)$ for any $y \in (1, y_1)$ thus imply that $\vert P(y)\vert<1$ for all $1<\vert y \vert <y_1$.

\medskip 
  
We pursue by showing \ref{it6:lem:new:add}. We proceed as in the proof of \ref{it5:lem:new:add}. Using \eqref{eq:kernels}, we first rewrite the equality $Q(1+\epsilon,y)=0$ as
\begin{equation*}
     P_\epsilon(y)=1,\quad \text{where } P_\epsilon(y)=\sum_{j=-k_0}^\infty \left( \sum_{i=-k_0}^\infty \mu(i,j)(1+\epsilon)^i\right) y^j.
\end{equation*}
The polynomial $P_\epsilon$ above has non-negative coefficients and is strictly convex on $(0,R_\epsilon)$, where $R_\epsilon$ is the radius of convergence of $P_\epsilon$. For $\epsilon=0$, it is equal to $P$ as defined in \eqref{eq:def_P}. Using that $P'(1)<0$, we deduce that for $\epsilon>0$ small enough, $P_\epsilon'(Y_0(1+\epsilon))<0$. Then for $y\in(Y_0(1+\epsilon),Y_1(1+\epsilon))$, $P_\epsilon$ takes values in $(0,1)$. We conclude as in \ref{it5:lem:new:add}.

\medskip

In order to prove \ref{it7:lem:new:add}, we first write, using \eqref{eq:kernels}, that $Q(x,y)=x^{k_0}y^{k_0}\bigl(M(x,y)-1\bigr)$,
where
\begin{equation*}
     M(x,y)=\sum_{i,j\geq-k_0}\mu(i,j)x^i y^j.
\end{equation*}
Then, by positivity of the coefficients,
\begin{equation*}
     \vert M(x,y)\vert \leq M(\vert x\vert,\vert y\vert),
\end{equation*}
and we conclude using \ref{it6:lem:new:add}.

\medskip

{It remains to prove \ref{it8:lem:new:add}.} For $(x,y)\in(0,\infty)^2$, consider the function $g_t(x,y)= x y^{t} $ on $Q(x,y)=0$ and look for its extrema. Equivalently, look at the extrema of $(u,v)\mapsto e^{u +t v}$ on
\begin{equation}
\label{eq:dom_Q}
     \mathcal Q =\{(u,v)\in\mathbb R^2: \sum_{i,j} \mu(i,j) e^{iu +jv }=1\}=\{(u,v)\in\mathbb R^2: Q(e^u,e^v)=0\},
\end{equation}
see Figure \ref{fig:graphs_bis}. There are three particular points on the latter curve, namely, $(0,0)$, $(u_0,0)=(\log x_{1}, \log 1)$ and $(0, v_0)=(\log 1, \log y_{1})$.
\begin{figure}[ht!]
\includegraphics[width=0.4\textwidth]{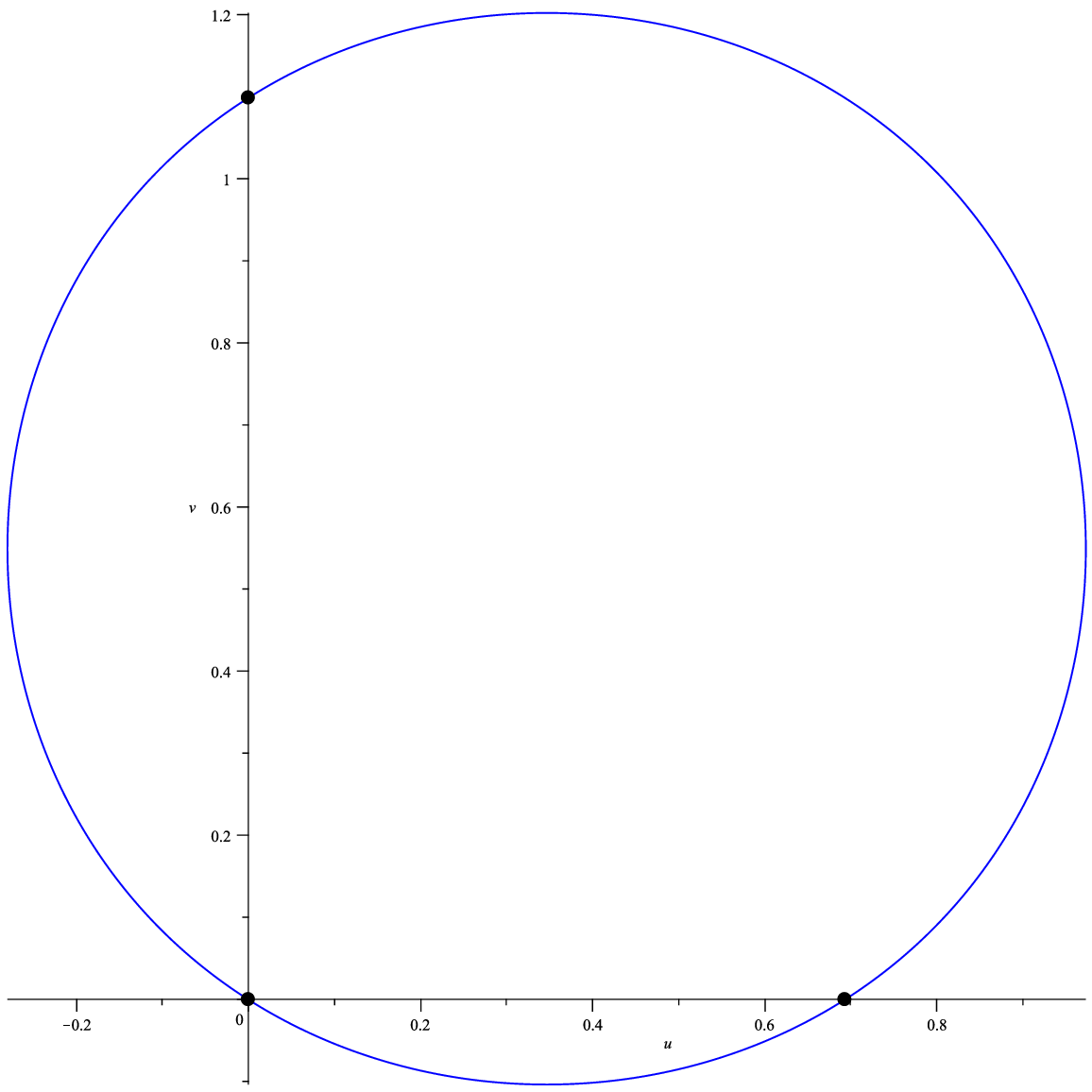}
\caption{An example of the domain $\mathcal Q$ as in \eqref{eq:dom_Q}, for the random walk with transition probabilities as in the caption of Figure~\ref{fig:examples_gamma0} (right display). The points $(0,0)$, $(u_0,0)=(\log 2,0)$ and $(0,v_0)=(0,\log 3)$ are 
visualized with bullets.}
\label{fig:graphs_bis}
\end{figure}

Let us show  that on 
\begin{equation*}
   {\mathcal P}=\mathcal Q\cap(0,\infty)^2,
\end{equation*}
which is the part of ${\mathcal Q}$ between $(u_0,0)$ and $(0,v_0)$ run counterclockwise, the function $u+tv$ is strictly bigger than its values at the boundary points:
\begin{equation}
\label{uveq}
     u+t v > \min\{u_0, t v_0\},\quad \forall (u,v) \in \mathcal{P},\ \forall t \in (0, \infty).
\end{equation}
Consider the critical points of $u+tv$ on ${\mathcal Q}$. A necessary condition is that $u'(v)=-t$. But $u'(v) =- \frac{\partial_v Q(e^u, e^v)}{\partial_u Q(e^u,e^v)}$, so that at critical points one must have
     $t=\frac{\partial_v Q(e^u, e^v)}{\partial_u Q(e^u,e^v)}$.
   On the other hand, it has been established by Hennequin \cite{He-63}
that the mapping
\begin{equation*}
     (u,v) \mapsto \frac{{\rm grad}\, Q(e^u,e^v)}{ \Vert{\rm grad}\, Q(e^u,e^v)     \Vert   }
\end{equation*}
is a diffeomorphism between ${\mathcal Q}$ and the unit circle. Then the
   critical points $(u,v)$ are the images of the points of the unit circle  such that the ratio of the second coordinate by the first coordinate equals $t$.
     There are exactly two points on the unit circle with this property,
       so that there exist exactly 
    two critical points of $u+tv$ on ${\mathcal Q}$. The function $u+tv$ being continuous on ${\mathcal Q}$, it reaches its maximum and minimum.
      Then one of these points must be its minimum on ${\mathcal Q}$  and cannot belong to ${\mathcal P}$, since the function is positive 
    on this part, while it vanishes at $(0,0) \in {\mathcal Q}$.
     The second critical point must be the maximum of $u+tv$ 
       on ${\mathcal Q}$ and may belong to ${\mathcal P}$ or not.
    Furthermore, the function $u+tv$ must be strictly monotonous on
     ${\mathcal  Q}$ between these two critical points. 
     Hence the estimate 
      \eqref{uveq} holds. It implies 
\begin{equation*}
     \log (1+\epsilon)+ t \log Y_{1} (1+\epsilon)> \min \{\log x_{1}, t \log y_{1}\},
\end{equation*}
which proves \ref{it8:lem:new:add}. 
\end{proof}

\begin{lem}
\label{lem:poly_pos}
Let $(\mu(i,j))_{i,j \geq  -k_0}$ be any family of non-negative real numbers summing to one, such that the semigroup of $\mathbb Z^2$ generated by the support $\{(i,j)\in\mathbb Z^2 : \mu(i,j)\neq0 \}$ is $\mathbb Z^2$ itself. If a pair $(x,y)\in\mathbb C^2$ with $\vert x\vert=\vert y\vert=1$ satisfies
\begin{equation*}
    \sum_{i,j\geq-k_0}\mu(i,j)x^i y^j=1,
\end{equation*}
then necessarily $x=y=1$.  
\end{lem}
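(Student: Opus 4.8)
The plan is to turn the equality $\mu(x,y)=1$ into a statement about the arguments of $x$ and $y$ via saturation of the triangle inequality. Writing $x=e^{i\alpha}$ and $y=e^{i\beta}$ with $\alpha,\beta\in\mathbb{R}$, and setting $S=\{(m,n)\in\mathbb{Z}^2:\mu(m,n)\neq0\}$ for the support, I would first observe that
\[
1=\mu(x,y)=\sum_{(m,n)\in S}\mu(m,n)\,x^m y^n,\qquad \sum_{(m,n)\in S}\mu(m,n)\,|x^m y^n|=\sum_{(m,n)\in S}\mu(m,n)=1 .
\]
Since the first sum is a positive real number equal to the sum of the moduli of its terms, equality in the triangle inequality forces every term $\mu(m,n)x^m y^n$ with $\mu(m,n)>0$ to be a non-negative real, hence equal to $\mu(m,n)$ itself; equivalently $x^m y^n=1$, i.e. $m\alpha+n\beta\in2\pi\mathbb{Z}$, for every $(m,n)\in S$. (Alternatively one takes real parts and uses that $\sum_{(m,n)\in S}\mu(m,n)\bigl(1-\cos(m\alpha+n\beta)\bigr)=0$ is a sum of non-negative terms, each of which must therefore vanish.)

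Next I would introduce the set $L=\{(m,n)\in\mathbb{Z}^2:m\alpha+n\beta\in2\pi\mathbb{Z}\}$ and note that it is a subgroup of $\mathbb{Z}^2$: it contains $(0,0)$ and is visibly closed under addition and negation. By the previous step $S\subseteq L$. Since $L$ is a subgroup, it contains every finite sum of elements of $S$, that is, the sub-semigroup of $\mathbb{Z}^2$ generated by $S$; by hypothesis this sub-semigroup is all of $\mathbb{Z}^2$, and therefore $L=\mathbb{Z}^2$.

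Finally, $(1,0)\in L$ gives $\alpha\in2\pi\mathbb{Z}$, hence $x=1$, and $(0,1)\in L$ gives $\beta\in2\pi\mathbb{Z}$, hence $y=1$, which is exactly the claim. I do not expect a genuine obstacle: this is the standard "equality in the triangle inequality, plus the support generates the lattice" mechanism, and the only point that deserves a line of care is the elementary remark that a generating sub-semigroup of $\mathbb{Z}^2$ is automatically contained in any subgroup containing it. In the language of characters, the lemma simply asserts that the character $(m,n)\mapsto x^m y^n$ of $\mathbb{Z}^2$ is trivial on $S$, hence on the group $\langle S\rangle=\mathbb{Z}^2$, hence identically trivial, forcing $x=y=1$.
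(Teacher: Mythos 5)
Your proof is correct: the saturation of the triangle inequality (equivalently, $\sum_{(m,n)\in S}\mu(m,n)\bigl(1-\cos(m\alpha+n\beta)\bigr)=0$ with non-negative terms) forces $x^my^n=1$ on the support, and since the subgroup $L$ then contains the sub-semigroup generated by $S$, which is all of $\mathbb Z^2$ by hypothesis, the character is trivial and $x=y=1$; the argument also goes through verbatim for the infinite support allowed here, thanks to absolute convergence on the torus. The paper omits the proof, calling it "very standard," and your argument is exactly the standard one it has in mind, so there is nothing to add.
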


Observe that the hypothesis on the semigroup is equivalent to the irreducibility of the random walk $Z_0$ on $\mathbb Z^2$ whose increment distribution is given by $\mu$. 
The proof of Lemma~\ref{lem:poly_pos} is very standard and will be omitted.

\begin{lem}
\label{lem:Y_0'(1)_Y_0''(1)}
Let $(X,Y)$ be a random vector with distribution $\mu$. One has
\begin{equation}
\label{eq:value_Y_0'(1)}
     Y_{0}'(1)=-\frac{\mathbb EX}{\mathbb EY}
\end{equation}
and
\begin{equation}
\label{eq:value_Y_0''(1)}
     Y_{0}''(1)=\frac{(\mathbb EX)^2\mathbb EY-(\mathbb EX)^2\mathbb E(Y^2)+2\mathbb EX\mathbb EXY\mathbb EY+\mathbb EX(\mathbb EY)^2-\mathbb E(X^2)(\mathbb EY)^2}{2(\mathbb EY)^3}.
\end{equation}
\end{lem}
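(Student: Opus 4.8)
The plan is to obtain both formulas by the analytic implicit function theorem followed by two successive implicit differentiations of the equation defining $Y_0$, after first stripping off the harmless monomial factor in $Q$.

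First I would note that $Q(x,y)=x^{k_0}y^{k_0}\bigl(\mu(x,y)-1\bigr)$, where $\mu(x,y)=\sum_{i,j\geq-k_0}\mu(i,j)x^iy^j$ is analytic in a neighbourhood of $(1,1)$ (the exponential moment bound in Assumption \ref{as2} guarantees this), with $\mu(1,1)=1$ and $\partial_2\mu(1,1)=\sum_{i,j}j\mu(i,j)=\mathbb EY$. Since $x^{k_0}y^{k_0}$ does not vanish near $(1,1)$, the relation $Q(x,Y_0(x))=0$ provided by item \ref{it2:lem:new:add} of Lemma \ref{lem:new:add} is equivalent, on a neighbourhood of $x=1$, to
\begin{equation*}
     \mu(x,Y_0(x))=1 .
\end{equation*}
As $\partial_2\mu(1,1)=\mathbb EY\neq0$ (indeed $\mathbb EY<0$ under our assumptions), this identity can be differentiated any number of times, and all the partial derivatives of $\mu$ at $(1,1)$ are read off as (factorial) moments of the step distribution: $\partial_1\mu(1,1)=\mathbb EX$, $\partial_2\mu(1,1)=\mathbb EY$, $\partial_{11}\mu(1,1)=\mathbb E(X^2)-\mathbb EX$, $\partial_{12}\mu(1,1)=\mathbb E(XY)$, $\partial_{22}\mu(1,1)=\mathbb E(Y^2)-\mathbb EY$.

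Differentiating $\mu(x,Y_0(x))=1$ once gives $\partial_1\mu+\partial_2\mu\cdot Y_0'=0$, so evaluating at $x=1$ (where $Y_0(1)=1$) yields $Y_0'(1)=-\partial_1\mu(1,1)/\partial_2\mu(1,1)=-\mathbb EX/\mathbb EY$, which is \eqref{eq:value_Y_0'(1)}. Differentiating the relation $\partial_1\mu+\partial_2\mu\cdot Y_0'=0$ once more in $x$ produces
\begin{equation*}
     \partial_{11}\mu+2\partial_{12}\mu\,Y_0'+\partial_{22}\mu\,(Y_0')^2+\partial_2\mu\,Y_0''=0 ,
\end{equation*}
from which $Y_0''(1)$ is obtained by solving for $Y_0''$, substituting the moments listed above together with $Y_0'(1)=-\mathbb EX/\mathbb EY$, and clearing denominators by a power of $\mathbb EY$; collecting terms then gives \eqref{eq:value_Y_0''(1)}.

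This is a purely computational statement, so I do not expect a real obstacle. The only points requiring attention are: the reduction from $Q=0$ to $\mu=1$, so that the factor $x^{k_0}y^{k_0}$ (whose derivatives at $(1,1)$ need not vanish at higher order) does not enter; the correct identification of $\partial_{11}\mu,\partial_{12}\mu,\partial_{22}\mu$ at $(1,1)$ with the factorial moments $\mathbb E(X(X-1))$, $\mathbb E(XY)$, $\mathbb E(Y(Y-1))$; and the sign bookkeeping in the final algebraic simplification, where one uses $\mathbb EY\neq0$. As a sanity check one may verify the two formulas on the explicit random walk \eqref{eq:example_non-sym}, for which $Y_0$ can be written down in closed form.
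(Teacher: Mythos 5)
Your argument is correct and is essentially the paper's proof: two implicit differentiations at $x=1$ of the relation defining $Y_0$, the only (harmless) difference being that you first divide out the monomial $x^{k_0}y^{k_0}$ and work with $\mu(x,Y_0(x))=1$, identifying derivatives of $\mu$ with factorial moments, while the paper differentiates $Q(x,Y_0(x))=0$ directly and lets the $k_0$-dependent terms cancel. One small point: carrying your computation to the end yields the stated numerator divided by $(\mathbb EY)^3$, not by $2(\mathbb EY)^3$; the factor $2$ in \eqref{eq:value_Y_0''(1)} is a misprint (the subsequent identity \eqref{eq:comput_aab} is consistent with the formula without the $2$), so your final ``collecting terms'' claim matches the corrected statement rather than the literal one.
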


Similarly, we could compute the derivatives of $Y_1$ at $1$ (see Lemma \ref{lem:new:add} \ref{it6:lem:new:add} for the definition of $Y_1$) in terms of the moments of a Doob transform of $(X,Y)$.

\begin{proof}[Proof of Lemma \ref{lem:Y_0'(1)_Y_0''(1)}]
Differentiating the identity $Q(x,Y_{0}(x))=0$, one obtains 
\begin{equation}
\label{eq:diff_Q_0}
     \partial_1Q(x,Y_{0}(x))+Y_{0}'(x)\partial_2Q(x,Y_{0}(x))=0,
\end{equation}
and in particular (using that $Y_{0}(1)=1$)
\begin{equation*}
     Y_{0}'(1)=-\frac{\partial_1Q(1,1)}{\partial_2Q(1,1)}=-\frac{\sum_{i,j} i\mu(i,j)}{\sum_{i,j} j \mu(i,j)} ,
\end{equation*}
which proves \eqref{eq:value_Y_0'(1)}. Differentiating now \eqref{eq:diff_Q_0}, we get
\begin{multline}
\label{eq:diff2_Q_0}
     \partial^2_{1,1}Q(x,Y_{0}(x))+2Y_{0}'(x)\partial^2_{1,2}Q(x,Y_{0}(x))+Y_{0}''(x)\partial_2Q (x,Y_{0}(x))\\+(Y_{0}'(x))^2\partial^2_{2,2}Q (x,Y_{0}(x)) 
     =0.
\end{multline}
Moreover, one easily computes
\begin{equation}
\label{eq:values_second_deriv}
\left\{\begin{array}{rcl}
     \partial^2_{1,1}Q(1,1)&=&(2k_0-1)\mathbb EX+\mathbb E(X^2),\\
     \partial^2_{2,2}Q(1,1)&=&(2k_0-1)\mathbb EY+\mathbb E(Y^2),\\
     \partial^2_{1,2}Q(1,1)&=&k_0\mathbb EX+k_0\mathbb EY+\mathbb E(XY).
     \end{array}\right.
\end{equation}
Plugging \eqref{eq:values_second_deriv} in \eqref{eq:diff2_Q_0} evaluated at $x=1$, we conclude that \eqref{eq:value_Y_0''(1)} holds.
%
\end{proof}

\subsection{One-dimensional stationary probabilities} 

In the forthcoming proof of Theorem~\ref{thm:main-3}, we need to identify the invariant measure of the stationary Markov chain $X_1$ defined in \eqref{local_trans_probabilities_1}, which is a one-dimensional reflected random walk on $\N$, whose transitions are 
given in Equation~\eqref{eq:distrib_p_1}.
Using our notation \eqref{eq:kernels}, the associated kernels are $Q(x,1)$ (in the regime when $k\geq k_0$) and $q''_k(x,1)$ (when $0\leq k<k_0$).

\begin{lem}
\label{lem:inv_mes}
The invariant measure $\{\pi_1(i)\}_{i\geq0}$ of $X_1$ can be computed as
\begin{equation}
\label{eq:exact_expr_1D}
     \pi_1(i)=\frac{1}{2\pi i} \int_{\vert x\vert =1-\epsilon} \frac{ \sum_{k=0}^{k_0-1} \pi_1(k) q''_k(x,1)  }{x^{i-k_0+1} Q(x,1)}dx=
      \frac{1}{2\pi i} \int_{\vert x\vert =1+\epsilon} \frac{ \sum_{k=0}^{k_0-1} \pi_1(k) q''_k(x,1)  }{x^{i-k_0+1} Q(x,1)}dx.
\end{equation}
As $i\to\infty$, it admits the asymptotics
\begin{equation}
\label{eq:asympt_expr_1D}
     \pi_1(i)  \sim  \frac{A_1}{x_1^i},
\end{equation}
where the constant $A_1$ is positive, and equal to
\begin{equation}
\label{eq:formulation_c}
     A_1=\frac{\sum_{k=0}^{k_0-1} \pi_1(k) q''_k(x_{1},1)  }{x_{1}^{-k_0+1} \partial_1 Q(x_{1},1)}.
\end{equation}
\end{lem}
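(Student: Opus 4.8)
\textbf{Proof proposal for Lemma \ref{lem:inv_mes}.}

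The plan is to derive the contour-integral formula \eqref{eq:exact_expr_1D} from a one-dimensional functional equation for the generating function of $\{\pi_1(i)\}$, and then extract the asymptotics \eqref{eq:asympt_expr_1D} by pushing the contour across the pole at $x=x_1$.

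First I would introduce the generating function $\Pi(x)=\sum_{i\geq k_0}\pi_1(i)x^{i-k_0}$ and write the stationarity relations $\pi_1(\ell)=\sum_k\pi_1(k)\mathbb P_k(X_1(1)=\ell)$, splitting the sum according to whether $k\geq k_0$ or $0\leq k<k_0$. Multiplying by $x^{\ell-k_0}$ and summing over $\ell$, the homogeneous part produces (after the usual reindexing) a term involving $Q(x,1)\Pi(x)$, while the boundary part $0\leq k<k_0$ contributes the polynomial $\sum_{k=0}^{k_0-1}\pi_1(k)q_k''(x,1)$; this is exactly the one-variable analogue of Lemma \ref{lem:eq_func_GF}, and yields the identity
\begin{equation*}
     -Q(x,1)\Pi(x)=\sum_{k=0}^{k_0-1}\pi_1(k)q_k''(x,1)
\end{equation*}
valid for $\vert x\vert<1$ (convergence of $\Pi$ on the unit disk follows since $\{\pi_1(i)\}$ is a probability measure). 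Since $Q(x,1)=x^{k_0}(P(x)-1)$ with $P$ strictly convex, negative drift, and $P(1)=1$, $P'(1)<0$, the only zero of $Q(\cdot,1)$ inside or on the closed unit disk is the simple zero at $x=1$ (use Lemma \ref{lem:new:add} \ref{it5:lem:new:add}, or rather its $x$-analogue, together with $\vert P(x)\vert\leq P(\vert x\vert)<1$ for $\vert x\vert<1$); one checks that $x=1$ is in fact a removable singularity of the right-hand side divided by $Q(x,1)$ because $\sum_k\pi_1(k)q_k''(1,1)=-Q(1,1)\Pi(1)=0$, so $\Pi(x)=-\sum_k\pi_1(k)q_k''(x,1)/Q(x,1)$ is analytic on a disk of radius $>1$, more precisely up to $\vert x\vert<x_1$. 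The coefficient $\pi_1(i)$ is then recovered by Cauchy's formula $\pi_1(i)=\frac{1}{2\pi i}\int_{\vert x\vert=1-\epsilon}\Pi(x)x^{-(i-k_0)-1}dx$, which is the first equality in \eqref{eq:exact_expr_1D}; since $\Pi$ extends analytically to $\{\vert x\vert<x_1\}$ and $x_1>1$, Cauchy's theorem lets us enlarge the contour to $\vert x\vert=1+\epsilon$ for $\epsilon$ small, giving the second equality.

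For the asymptotics \eqref{eq:asympt_expr_1D}, I would push the contour further out, past the circle $\vert x\vert=x_1$: the integrand $\sum_k\pi_1(k)q_k''(x,1)/(x^{i-k_0+1}Q(x,1))$ has a simple pole at $x=x_1$ (simple because $P'(x_1)>0$, as $x_1>\tau$, so $\partial_1Q(x_1,1)\ne0$), and by Lemma \ref{lem:new:add}\ref{it5:lem:new:add}-analogue there are no other zeros of $Q(\cdot,1)$ on the circle $\vert x\vert=x_1$ nor strictly between the circles of radii $1$ and $x_1$. Choosing a contour $\vert x\vert=x_1+\epsilon'$ with $\epsilon'>0$ small enough that no further zero of $Q(\cdot,1)$ is crossed, the residue theorem gives
\begin{equation*}
     \pi_1(i)=-\Res_{x=x_1}\frac{\sum_{k=0}^{k_0-1}\pi_1(k)q_k''(x,1)}{x^{i-k_0+1}Q(x,1)}+\frac{1}{2\pi i}\int_{\vert x\vert=x_1+\epsilon'}\frac{\sum_{k=0}^{k_0-1}\pi_1(k)q_k''(x,1)}{x^{i-k_0+1}Q(x,1)}dx,
\end{equation*}
where the residue equals $c/x_1^i$ with $c$ as in \eqref{eq:formulation_c} (a direct computation: $\Res_{x=x_1}=\sum_k\pi_1(k)q_k''(x_1,1)/(x_1^{i-k_0+1}\partial_1Q(x_1,1))$, and the sign works out once one tracks the orientation), while the remaining integral is $O((x_1+\epsilon')^{-i})=o(x_1^{-i})$ by the trivial bound on a fixed contour. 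Positivity of $c$ follows because $\pi_1(i)>0$ (irreducibility) forces the leading coefficient to be positive; alternatively one sees $q_k''(x_1,1)=x_1^k(P_k(x_1)-1)$ has a definite sign and $\partial_1Q(x_1,1)=k_0x_1^{k_0-1}(P(x_1)-1)+x_1^{k_0}P'(x_1)=x_1^{k_0}P'(x_1)>0$.

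The main obstacle I anticipate is the bookkeeping around the zero at $x=1$: one must verify carefully that the apparent pole of $\Pi$ at $x=1$ is removable (equivalently, that the vector $(\pi_1(0),\ldots,\pi_1(k_0-1))$ satisfies the compatibility condition $\sum_k\pi_1(k)q_k''(1,1)=0$, which is precisely stationarity tested against the constant function) so that $\Pi$ is genuinely analytic across $\vert x\vert=1$ and the contour can be moved. A secondary, more routine point is confirming that $Q(\cdot,1)$ has no zeros of modulus in $(1,x_1]$ other than $x_1$ itself and that $x_1$ is simple — both follow from strict convexity of $P$ and $\vert P(x)\vert\le P(\vert x\vert)$, exactly as in the proof of Lemma \ref{lem:new:add}\ref{it5:lem:new:add}.
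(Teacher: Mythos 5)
Your route is the same as the paper's: introduce $\Pi_1(x)=\sum_{i\geq k_0}\pi_1(i)x^{i-k_0}$, turn the equilibrium equations into the one-dimensional functional equation, extract $\pi_1(i)$ by Cauchy's formula, enlarge the contour, and pick up the residue at $x_1$. Two bookkeeping points: the equilibrium equations actually give $-Q(x,1)\Pi_1(x)=\sum_{k=0}^{k_0-1}\pi_1(k)q_k''(x,1)$ (your sign), so you must carry that sign consistently through the Cauchy formula and the residue rather than waving at "the orientation"; and to move the contour to $\vert x\vert=x_1+\epsilon'$ you need more than the open-annulus statement of Lemma \ref{lem:new:add} \ref{it5:lem:new:add} — you need that $x_1$ is the \emph{only} zero of $Q(\cdot,1)$ on the circle $\vert x\vert=x_1$ and that a slightly larger annulus is zero-free, which requires the equality case in $\vert M(x)\vert\leq M(\vert x\vert)$ together with the aperiodicity argument of Lemma \ref{lem:poly_pos}. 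These are repairable.

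The genuine gap is the strict positivity of $c$, and neither of your two arguments closes it. The first ("$\pi_1(i)>0$ forces the leading coefficient to be positive") only yields $c\geq 0$: if the numerator $\sum_k\pi_1(k)q_k''(x_1,1)$ happened to vanish, then $c=0$, the pole at $x_1$ would be absent, $\pi_1(i)$ would decay strictly faster than $x_1^{-i}$, and nothing in the positivity of $\pi_1$ contradicts that — so the claimed asymptotics \eqref{eq:asympt_expr_1D} with $c>0$ is not established this way. The second argument is false as stated: $q_k''(x_1,1)=x_1^k\bigl(M_k''(x_1)-1\bigr)$ has no definite sign for $k\geq 1$, since $\mu_k''$ charges horizontal steps down to $-k$ and hence $M_k''(x_1)$ may be $<1$ at $x_1>1$; the paper explicitly warns that these terms may be negative, so the sign of the numerator in \eqref{eq:formulation_c} is not apparent (your computation $\partial_1 Q(x_1,1)=x_1^{k_0}M'(x_1)>0$ only settles the denominator). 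The paper treats exactly this as the delicate point and resolves it by citing Theorem 2 of \cite{DeKoWa-19} (see also \cite{BoLo-96}), which gives $x_1^{-i}$-asymptotics with a strictly positive constant for a general class of one-dimensional chains; some such input, or an independent probabilistic argument, is indispensable here, and it matters downstream because the non-vanishing of $c$ is precisely what is invoked in Step 6 of the proof of Theorem \ref{thm:main-2}.
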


Although Lemma \ref{lem:inv_mes} is classical in the probabilistic literature, we present some elements of proof below, in order to make our article self-contained. We thank Onno Boxma and Dmitry Korshunov for useful bibliographic advice.


\begin{proof}[Sketch of the proof of Lemma \ref{lem:inv_mes}]
Introduce the 
generating function
$
     \Pi_1(x)=\sum_{k=k_0}^\infty \pi_1(k)x^{k-k_0}.
$
Then the following functional equation holds (it is equivalent to the equilibrium equations):
\begin{equation*}
     Q(x,1)\Pi_1(x)= \sum_{k=0}^{k_0-1} \pi_1(k) q''_k(x,1).
\end{equation*}
The first integral expression in \eqref{eq:exact_expr_1D} (over $\vert x\vert=1-\epsilon$) immediately follows. Since $Q(x,1)=0$ and $\Pi_1(1)=1$, the right-hand side of the above identity is zero at $x=1$ and the function $\Pi_1$ is analytic in a neighborhood of $1$. We deduce the second integral representation in \eqref{eq:exact_expr_1D} (over $\vert x\vert=1+\epsilon$). Using the functional equation and the fact that $Q(x,1)$ has a simple pole at $x_1$ (see Lemma \ref{lem:new:add} \ref{it5:lem:new:add}), one immediately deduces the asymptotics \eqref{eq:asympt_expr_1D}, with the expression of the constant $A_1$ as in \eqref{eq:formulation_c}.

On the other hand, the (strict) positivity of the constant $A_1$ in \eqref{eq:asympt_expr_1D} is more difficult to establish (and is not clear at all from the algebraic expression of $A_1$ given in \eqref{eq:formulation_c}, as for $k\geq1$, $q_{k}''(x_1,1)$ may be negative). However, Theorem~2 in \cite{DeKoWa-19} shows the positivity of $A_1$ for a more general class of random walks; see also \cite{BoLo-96}.
\end{proof}

\section{Proof of Theorem \ref{thm:main-3}}
\label{sec:proofs_2}

Let us first summarize the proof Theorem~\ref{thm:main-3} in several important steps, to which we shall refer in the extended proof. First of all, it follows from the main functional equation \eqref{eq:main_func_eq} that for any $\epsilon>0$ small enough,
\begin{multline}
\label{eq:Green_function_double_integral}
 g\bigl((i_0,j_0) \to (i,j)\bigr)=\\
 \frac{1}{(2\pi i)^2} \iint\limits_{\vert x\vert =\vert y\vert=1-\epsilon} \frac{\sum_{\ell=0}^{k_0-1}q_\ell'(x,y) g_\ell(x)+ \sum_{k=0}^{k_0-1} q_k''(x,y) \widetilde g_k(y)+f_{i_0,j_0}(x,y)
  }{ x^{i+k_0-1}y^{j+k_0-1} Q(x,y) }dy dx.
\end{multline}
Then:
\begin{enumerate}[label=\arabic{*}.,ref=\arabic{*}]
     \item\label{it:proof_main_1}We shall apply the residue theorem to the inner integral above (in $y$), so as to split $g\bigl((i_0,j_0) \to (i,j)\bigr)$ into two terms, see \eqref{eq:Green_sum_terms}. 
     \item\label{it:proof_main_2}The first term in the decomposition \eqref{eq:Green_sum_terms} is a one-variable integral, to which we apply the classical residue theorem. Some technical work is needed here to prove that there is only one contributing pole, at $1$ (we use several properties proved in Lemma \ref{lem:new:add}).
     \item\label{it:proof_main_3}The second term in \eqref{eq:Green_sum_terms} is a double integral over $\vert x\vert =1-\epsilon$ and $\vert y\vert=1+\epsilon$. We will again apply the residue theorem to the inner integral and, in this way, obtain a further two-term decomposition and the expression \eqref{eq:Green_sum_terms2} for the Green function.
     \item\label{it:proof_main_4}The second term in the sum \eqref{eq:Green_sum_terms2} is studied via the residue theorem, in a similar way as the first term in the decomposition \eqref{eq:Green_sum_terms}. 
     \item\label{it:proof_main_5}The third term in \eqref{eq:Green_sum_terms2} is an integral on $\vert x\vert =\vert y\vert=1+\epsilon$ and is shown to be negligible.
     \item\label{it:proof_main_6}Conclusion.
\end{enumerate}

Before embarking in the proof, we state an equivalent, but more analytic version of Theorem~\ref{thm:main-2}. To that purpose, similarly to \eqref{eq:def_Green_MAP}, we introduce the following generating functions, for respectively fixed $k\in\N$ and $\ell\in\N$:
\begin{align}
\label{eq:GF_kcdot}
     {\cal G}_{(i,j)\to (k, \cdot)}(x) &= \sum_{\ell=0}^\infty g\bigl((i,j)\to(k,\ell)\bigr) x^\ell,\\
     {\cal G}_{(i,j)\to (\cdot,\ell)}(x) &= \sum_{k=0}^\infty g\bigl((i,j)\to(k,\ell)\bigr) x^k.\label{eq:GF_lcdot}
\end{align}

\begin{cor}
\label{cor:main-1-bis}
Under Assumptions \ref{as1}--\ref{as6} and \ref{as2pp}, there exists $\varepsilon > 0$ such that for any $(i,j)\in\N^2$ and any $k,\ell\in\N$, the generating functions ${\cal G}_{(i,j)\to (k, \cdot)}$ and ${\cal G}_{(i,j)\to (\cdot,\ell)}$ can be continued in a meromorphic manner in the disk $\{x\in\C: \vert x\vert < 1 + \varepsilon\}$, with a unique simple pole, which is located at the point $x=1$ and admits the residue
\begin{align}
     \Res_{1} \,{\cal G}_{(i,j)\to (k, \cdot)} &=  \pi_1(k) \P_{(i_0,j_0)} \bigl({\cal N}_1 < \infty\bigr)/V_1,\label{th1_e1}\\
     \Res_{1} \,{\cal G}_{(i,j)\to (\cdot,\ell)}    &=  \pi_2(\ell) \P_{(i_0,j_0)} \bigl({\cal N}_2 < \infty\bigr)/V_2.\nonumber
\end{align}
\end{cor}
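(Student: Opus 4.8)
The plan is to deduce Corollary~\ref{cor:main-1-bis} from Theorem~\ref{thm:main-1} by a routine translation between pointwise decay of coefficients and meromorphic continuation of generating functions. Fix $k\in\mathbb N$ and consider
\begin{equation*}
     {\cal G}_{(i,j)\to (k,\cdot)}(x)=\sum_{\ell=0}^\infty g\bigl((i,j)\to(k,\ell)\bigr)x^\ell.
\end{equation*}
By Theorem~\ref{thm:main-1}, there are constants $C_k,\delta_k>0$ (we use the $i$-indexed version with $i=k$, renaming the constants) such that
\begin{equation*}
     g\bigl((i,j)\to(k,\ell)\bigr)=\frac{\pi_1(k)}{V_1}\,\P_{(i,j)}\bigl({\cal N}_1<\infty\bigr)+r_{k}(\ell),\qquad |r_k(\ell)|\leq C_k e^{-\delta_k(\ell-j)}.
\end{equation*}
(Here I write $(i,j)$ for the starting point, bearing in mind that in the statement $(i_0,j_0)$ and $(i,j)$ are identified, since $G(x,y)$ etc.\ in \eqref{eq:generating_functions_Green} are built from the fixed reference point $(i_0,j_0)$; I would make this identification explicit at the start of the proof.) First I would split the sum accordingly:
\begin{equation*}
     {\cal G}_{(i,j)\to (k,\cdot)}(x)=\frac{\pi_1(k)}{V_1}\P_{(i,j)}\bigl({\cal N}_1<\infty\bigr)\sum_{\ell=0}^\infty x^\ell+\sum_{\ell=0}^\infty r_k(\ell)x^\ell=\frac{\pi_1(k)\P_{(i,j)}\bigl({\cal N}_1<\infty\bigr)/V_1}{1-x}+R_k(x).
\end{equation*}
The first term is meromorphic on all of $\mathbb C$ with a single simple pole at $x=1$, whose residue is $-\pi_1(k)\P_{(i,j)}({\cal N}_1<\infty)/V_1$; up to the orientation/sign convention for $\Res$ used in \eqref{eq:formulation_c} and throughout the paper, this is exactly the claimed value \eqref{th1_e1}. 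The exponential bound $|r_k(\ell)|\leq C_k e^{\delta_k j}e^{-\delta_k\ell}$ shows that $R_k(x)=\sum_\ell r_k(\ell)x^\ell$ has radius of convergence at least $e^{\delta_k}>1$, hence is analytic on $\{|x|<1+\varepsilon\}$ for any $\varepsilon<e^{\delta_k}-1$. Therefore ${\cal G}_{(i,j)\to(k,\cdot)}$ extends meromorphically to that disk with a unique simple pole at $1$ and the asserted residue. The argument for ${\cal G}_{(i,j)\to(\cdot,\ell)}$ is identical, using the second assertion of Theorem~\ref{thm:main-1} and $\pi_2,V_2,{\cal N}_2$.

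One small point I would be careful about: one must check that the constant $\varepsilon$ can be chosen independently of $(i,j)$ and of $k$ (resp.\ $\ell$). The dependence on the starting point enters $r_k(\ell)$ only through the factor $e^{\delta_k j}$, which affects the constant in front of $R_k$ but not its radius of convergence, so $\varepsilon$ may be taken to depend on $k$ only; and since we only need \emph{existence} of some $\varepsilon>0$ for each $k$ (the statement quantifies ``$\exists\varepsilon$'' after fixing the data), this is harmless. If a uniform-in-$k$ constant were wanted, one would need uniform lower bounds on the $\delta_k$, which Theorem~\ref{thm:main-1} does not claim; but the statement of Corollary~\ref{cor:main-1-bis} as written does not require this, so I would not pursue it. There is no genuine obstacle here: the only thing to get right is the bookkeeping of the residue sign convention so that \eqref{th1_e1} comes out with the stated sign, and the observation that the geometric series $\sum x^\ell=1/(1-x)$ is what produces the pole while the exponentially small remainder is holomorphic past the unit circle.
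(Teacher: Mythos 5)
Your proof is correct and is exactly the argument the paper intends: Corollary~\ref{cor:main-1-bis} is stated (without a separate proof) as the analytic reformulation of Theorem~\ref{thm:main-1}, obtained precisely as you do by splitting each coefficient into the constant $\pi_1(k)\P_{(i_0,j_0)}({\cal N}_1<\infty)/V_1$ plus an exponentially small remainder, summing the geometric series to produce the simple pole at $x=1$, and observing that the remainder's generating function is analytic in a disk of radius $e^{\delta_k}>1$. Your remark on the sign is also apt: the paper consistently states residues with the convention $\lim_{x\to 1}(1-x)f(x)$ (compare Proposition~\ref{prop2_1}, where the stated residue of $\Phi^{1}_{(i,j)\to(k,\cdot)}/(1-\Phi^{1}_{(k,0)\to(k,\cdot)})$ is $+\pi_1(k)/V_1$), so \eqref{th1_e1} agrees with your computation.
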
 

\begin{proof}[Proof of Theorem~\ref{thm:main-3}]
We start with \textit{Step \ref{it:proof_main_1}}. Let us fix $x$ on the circle $\vert x\vert =1-\epsilon$. Since the integrand of the inner integral in \eqref{eq:Green_function_double_integral} may be continued as a meromorphic function to the larger disc $\{\vert y\vert <1+\epsilon\}$, see Corollary \ref{cor:main-1-bis}, we write the Green function \eqref{eq:Green_function_double_integral} as
\begin{multline}
\label{eq:Green_sum_terms}
 \frac{1}{2\pi i} \int\limits_{\vert x\vert =1-\epsilon} \sum_{ y :1-\epsilon<\vert y\vert < 1+\epsilon} \Res  \frac{\sum_{\ell=0}^{k_0-1}q_\ell'(x,y) g_\ell(x)+ \sum_{k=0}^{k_0-1} q_k''(x,y) \widetilde g_k(y)+f_{i_0,j_0}(x,y)
  }{ x^{i+k_0-1}y^{j+k_0-1} Q(x,y) }dx \\+
 \frac{1}{(2\pi i)^2} \iint\limits_{\substack{\vert x\vert =1-\epsilon\\\vert y\vert =1+\e}} \frac{\sum_{\ell=0}^{k_0-1}q_\ell'(x,y) g_\ell(x)+ \sum_{k=0}^{k_0-1} q_k''(x,y) \widetilde g_k(y)+f_{i_0,j_0}(x,y)
  }{ x^{i+k_0-1}y^{j+k_0-1} Q(x,y) } dydx.
\end{multline}

\textit{Step \ref{it:proof_main_2}} consists in studying the first integral in \eqref{eq:Green_sum_terms}. Let us look at the residues appearing in the integrand. Obviously, the poles will be found among the zeros of $Q(x,y)$ and the poles of the numerator, $x$ being fixed on the circle $\vert x\vert =1-\epsilon$.

Let $O_\delta(1)$ and $U_\delta(1)$ be the neighborhoods of $1$ introduced in Lemma \ref{lem:new:add} \ref{it2:lem:new:add}. We are going to study successively three cases (recall that, in addition, we always have $\vert x\vert =1-\epsilon$ and $1-\epsilon<\vert y\vert<1+\epsilon$):
\begin{enumerate}[label=\arabic{*}.,ref=\arabic{*}]
   \item\label{it:neigh-case_1}$x\notin O_\delta(1)$;
   \item\label{it:neigh-case_2}$x\in O_\delta(1)$ and $y\notin U_\delta(1)$;
   \item\label{it:neigh-case_3}$x\in O_\delta(1)$ and $y\in U_\delta(1)$.
\end{enumerate}

We first consider case \ref{it:neigh-case_1} and prove that no point will contribute to the computation of the residues. By Lemma \ref{lem:new:add} \ref{it1:lem:new:add}, for any $\vert x\vert=\vert y\vert =1$ with $x\notin O_\delta(1)$, the continuous function $Q(x,y)$ is non-zero. By continuity, we also have $Q(x,y)\neq 0$ for any $1-\epsilon<\vert x\vert<1+\epsilon$, $1-\epsilon<\vert y\vert<1+\epsilon$ with $x\notin O_\delta(1)$. Case \ref{it:neigh-case_2} is handled symmetrically. 

In case \ref{it:neigh-case_3}, then using Lemma \ref{lem:new:add} \ref{it2:lem:new:add}, there is only one potential zero of $Q(x,y)$, namely $Y_{0}(x)$. We take $\delta$ sufficiently small to ensure that for all $\ell=0,\ldots, k_0-1$, $g_\ell(x)(1-x)$ and $\widetilde g_k(Y_{0}(x))(1-Y_{0}(x))$ are analytic in $O_\delta(1)$, see Corollary \ref{cor:main-1-bis} and Lemma \ref{lem:new:add} \ref{it2:lem:new:add}, and we show that $Y_{0}(x)$ is not a pole. Our key argument is that $y=Y_{0}(x)$ will also be a zero of the numerator, and so a removable singularity of the  integrand for any $x \in O_\delta(1)\setminus \{x=1\}$.

Let us introduce the domain $V$ as in \eqref{eq:def_V} (see Lemma \ref{lem:new:add} \ref{it4:lem:new:add}). Since $\vert x \vert <1$ and $\vert Y_{0}(x)\vert <1$  on this set, the main equation \eqref{eq:main_func_eq} implies 
\begin{equation}
\label{hh}
    \sum_{\ell=0}^{k_0-1}q_\ell'(x,Y_{0}(x)) g_\ell(x)+ \sum_{k=0}^{k_0-1} q_k''(x,Y_{0}(x)) \widetilde g_k(Y_{0}(x))+f_{i_0,j_0}(x,Y_{0}(x))=0, \quad \forall x \in V.
\end{equation}
Furthermore, the left-hand side of \eqref{hh} multiplied by the factor $(1-x)(1-Y_{0}(x))$ is an analytic function in $O_\delta(1)$, which equals zero in the domain $V\subset O_\delta(1)$. Then, by the principle of analytic continuation, the left-hand side of \eqref{hh} multiplied by  $(1-x)(1-Y_{0}(x))$ equals zero in the whole of  $O_\delta(1)$. Hence the left-hand side of \eqref{hh} is equal to zero in $O_\delta(1)\setminus\{x=1\}$.

In the first integral in \eqref{eq:Green_sum_terms}, it remains to compute the residues at the poles of the numerator. By Corollary \ref{cor:main-1-bis}, there exists only one pole of the numerator, namely, $y=1$, which is a pole of $\widetilde g_k(y)$ for all $k=0,\ldots, k_0-1$ (by \eqref{eq:relation_GG}, $\widetilde g_k$ and ${\cal G}_{(i,j)\to (k, \cdot)}$ have the same residue at $1$, namely $\pi_1(k) \P_{(i_0,j_0)} \bigl({\cal N}_1 < \infty\bigr)/V_1$). Thus we get 
\begin{multline*}
 g\bigl((i_0,j_0) \to (i,j)\bigr)
= \frac{\mathbb P_{(i_0,j_0)}\bigl(\mathcal N_1<\infty\bigl)}{V_1} \frac{1}{2\pi i } \int\limits_{\vert x\vert =1-\epsilon}\frac{ \sum_{k=0}^{k_0-1} \pi_1(k) q''_k(x,1)  }{x^{i-k_0+1} Q(x,1)}dx
\\
+ \frac{1}{(2\pi i)^2} \iint\limits_{\substack{\vert x\vert =1-\epsilon\\\vert y\vert =1+\e}} \frac{\sum_{\ell=0}^{k_0-1}q_\ell'(x,y) g_\ell(x)+ \sum_{k=0}^{k_0-1} q_k''(x,y) \widetilde g_k(y)+f_{i_0,j_0}(x,y)
  }{ x^{i+k_0-1}y^{j+k_0-1} Q(x,y) } dxdy,
\end{multline*}
where we inverted the order of integration in the second term. As proved in Lemma \ref{lem:inv_mes}, the first term is the integral of an analytic function in the annulus $\{1-\epsilon<\vert x\vert <1+\epsilon\}$, so that it equals the same integral over $\{\vert x\vert =1+\epsilon\}$, which is nothing else but the invariant measure announced in the theorem, see \eqref{eq:exact_expr_1D}. 

\medskip 
 
\textit{Step \ref{it:proof_main_3}.} We proceed with the second term of \eqref{eq:Green_sum_terms} as previously:
\begin{multline}
\label{eq:Green_sum_terms2}
 g\bigl((i_0,j_0) \to (i,j)\bigr)= \frac{\mathbb P_{(i_0,j_0)} \bigl(\mathcal N_1<\infty\bigr)}{V_1}\pi_1(i)\\
+ 
 \frac{1}{2\pi i} \int\limits_{\vert y\vert =1+\epsilon} \sum_{ x :1-\epsilon<\vert x\vert < 1+\epsilon} \Res \frac{\sum_{\ell=0}^{k_0-1}q_\ell'(x,y) g_\ell(x)+ \sum_{k=0}^{k_0-1} q_k''(x,y) \widetilde g_k(y)+f_{i_0,j_0}(x,y)
  }{ x^{i+k_0-1}y^{j+k_0-1} Q(x,y) }dy \\
+ \frac{1}{(2\pi i)^2} \iint\limits_{\vert x\vert =\vert y\vert=1+\epsilon} \frac{\sum_{\ell=0}^{k_0-1}q_\ell'(x,y) g_\ell(x)+ \sum_{k=0}^{k_0-1} q_k''(x,y) \widetilde g_k(y)+f_{i_0,j_0}(x,y)
  }{ x^{i+k_0-1}y^{j+k_0-1} Q(x,y)} dxdy. 
\end{multline}

\textit{Step \ref{it:proof_main_4}.} Using a symmetric reasoning as in Step \ref{it:proof_main_2}, we obtain
\begin{multline}
\label{eq:Green_sum_terms3}
    g\bigl((i_0,j_0) \to (i,j)\bigr)= \frac{\mathbb P_{(i_0,j_0)} \bigl(\mathcal N_1<\infty\bigr)}{V_1}\pi_1(i) +  \frac{\mathbb P_{(i_0,j_0)} \bigl(\mathcal N_2<\infty\bigr)}{V_2}\pi_2(j)\\
+ \frac{1}{(2\pi i)^2} \iint\limits_{\vert x\vert =\vert y\vert=1+\epsilon} \frac{\sum_{\ell=0}^{k_0-1}q_\ell'(x,y) g_\ell(x)+ \sum_{k=0}^{k_0-1} q_k''(x,y) \widetilde g_k(y)+f_{i_0,j_0}(x,y)}{ x^{i+k_0-1}y^{j+k_0-1} Q(x,y)} dxdy.
\end{multline}

\textit{Step \ref{it:proof_main_5}}. Let $i,j \to \infty$. We prove that the last integral above is  
$o(x_{1}^{-i} + y_{1}^{-j})$. Let us write the integral in the second line of \eqref{eq:Green_sum_terms3} as 
\begin{multline}
\label{eq:three_terms}
 \frac{1}{(2\pi i)^2} \sum_{\ell=0}^{k_0-1} \int\limits_{\vert x\vert =1+\epsilon} \frac{g_\ell(x)}{x^{i+k_0-1}}  \int\limits_{\vert y\vert =1+\e}
\frac{q_\ell'(x,y)}{y^{j+k_0-1}Q(x,y)}dy dx \\+ \frac{1}{(2\pi i)^2} \sum_{k=0}^{k_0-1} \int\limits_{\vert y\vert =1+\epsilon} \frac{\widetilde g_k(y)}{y^{j+k_0-1}}  \int\limits_{\vert x\vert =1+\e}
\frac{q_k''(x,y)}{x^{i+k_0-1}Q(x,y)}dx dy \\+  \frac{1}{(2\pi i)^2} \int\limits_{\vert y\vert =1+\epsilon} \frac{1}{y^{j+k_0-1}}  \int\limits_{\vert x\vert =1+\e}
\frac{f_{i_0, j_0}(x,y)}{x^{i+k_0-1}Q(x,y)}dx dy.
\end{multline}
Using Lemma \ref{lem:new:add} \ref{it7:lem:new:add}, we may rewrite the first term of \eqref{eq:three_terms} as 
\begin{equation}
\label{eq:first_three_terms}
     \frac{1}{(2\pi i)^2} \sum_{\ell=0}^{k_0-1} \int\limits_{\vert x\vert =1+\epsilon} \frac{g_\ell(x)}{x^{i+k_0-1}}  \int\limits_{\vert y\vert =Y_1(1+\epsilon)-\eta}
\frac{q_\ell'(x,y)}{y^{j+k_0-1}Q(x,y)}dy dx,
\end{equation}
for any $\eta>0$. The integral in \eqref{eq:first_three_terms} is bounded from above by (up to a multiplicative constant)
\begin{equation*}
     (1+\epsilon)^{-i}(Y_1(1+\epsilon)-\eta)^{-j}=o(x_{1}^{-i} + y_{1}^{-j}),
\end{equation*}
where the last equality is a consequence of Lemma \ref{lem:new:add} \ref{it8:lem:new:add}, since $\eta>0$ may be taken as small as we want. We conclude similarly with the second and third terms of \eqref{eq:three_terms}.

\medskip

\textit{Step \ref{it:proof_main_6}}. Using \eqref{eq:Green_sum_terms3} together with the computations just above, we deduce that
\begin{equation*}
     g\bigl((i_0,j_0) \to (i,j)\bigr)= \frac{\mathbb P_{(i_0,j_0)} \bigl(\mathcal N_1<\infty\bigr)}{V_1}\pi_1(i) +  \frac{\mathbb P_{(i_0,j_0)} \bigl(\mathcal N_2<\infty\bigr)}{V_2}\pi_2(j)+o(x_{1}^{-i} + y_{1}^{-j}).
\end{equation*}
Finally, we use the fact that in Lemma \ref{lem:inv_mes}, the constant $A_1$ in the stationary measure asymptotics is non-zero, so we obtain the proof of Equation \eqref{eq:mainres} of Theorem~\ref{thm:main-3}.
\end{proof}

\section{Glossary of the hitting times}
\label{sec:hitting_times}

Throughout the paper, we introduced and made use of the following hitting times:
\begin{equation*}
\left\{\begin{array}{rcll}
  \tau(k,\ell)  & = & \inf\{n>0: Z(n) = (k,\ell)\},&\quad(\text{see } \eqref{eq:def_tau(k,l)}),\\
  \tau & = &\inf\{n>0: Y(n) < k_0\},&\quad(\text{see } \eqref{eq:def_tau}),\\
  \tau^{\textnormal{loc}}(k,\ell) &=& \inf\{n>0: Z_1(n) = (k,\ell)\},&\quad(\text{see } \eqref{eq:def_tau_1(k,l)}),\\
  \tau^{\textnormal{loc}}_1 &=& \inf\{n>0: Y_1(n) < k_0\},&\quad(\text{see } \eqref{eq:def_tau_1}),\\
  t_1(i) &=& \inf\{n > 0 : {\cal M}(n) = i\},&\quad(\text{see } \eqref{eq:def_t_n}),\\
  t_{n+1}(i) &=& \inf\{p > t_n(i) : {\cal M}(p) = i\},&\quad(\text{see } \eqref{eq:def_t_n}),\\
  {\mathcal  T} &=&  \inf\{n \geq 0: {\cal A}(n) < k_0\},&\quad(\text{see } \eqref{eq:def_cal_T}),\\
   T(k) &=& \inf\bigl\{n\geq 1: {\cal M}(n)\in{\cal E}_0\cup\{k\}\bigr\},&\quad(\text{see } \eqref{eq:def_T(k)}),\\ 
{T}_{k} &=& \inf\{n > 0: X(n) \leq (k_0-1)\vee k\},& \quad(\text{see } \eqref{eq:T1_T2_k}), \\ 
{T}_{k}^{\textnormal{loc}} &=& \inf\{n > 0: X_1(n) \leq (k_0-1)\vee k\},& \quad(\text{see } \eqref{eq:def_T_k_loc} ).
\end{array}
\right.
\end{equation*}

\subsubsection*{Acknowledgments}
We thank Gerold Alsmeyer, Onno Boxma and Dmitry Korshunov for bibliographic suggestions. The last author would like to warmly thank Elisabetta Candellero, Steve Melczer and Wolfgang Woess for many discussions at the initial stage of the project. We thank the associate editor and the two anonymous referees for their very careful readings and their numerous suggestions.


\begin{thebibliography}{10}
\bibliographystyle{plain}

\bibitem{AlDo-01}
L. Alili and R. A. Doney (2001).
Martin boundaries associated with a killed random walk.
\textit{Ann. Inst. H. Poincar\'e Probab. Statist.} \textbf{37} 313--338

\bibitem{Al-94} 
G. Alsmeyer (1994).
On the Markov renewal theorem. 
\textit{Stochastic Process. Appl.} \textbf{50} 37--56

\bibitem{An-88}
A. Ancona (1988).
Positive harmonic functions and hyperbolicity. 
\textit{Potential theory---surveys and problems (Prague, 1987)}, 1--23,
Lecture Notes in Math., 1344, \textit{Springer, Berlin}

\bibitem{Asmussen} 
S. Asmussen (2003). 
\textit{Applied Probability and Queues.} 
Second edition. Springer-Verlag, New York

\bibitem{BaFl-02}
C. Banderier and P. Flajolet (2002).
Basic analytic combinatorics of directed lattice paths.
\textit{Comput. Sci.} {\bf281} 37--80

\bibitem{BoBMMe-21}
A. Bostan, M. Bousquet-M\'elou and S. Melczer (2021).
Counting walks with large steps in an orthant.
\textit{J. Eur. Math. Soc. (JEMS)} \textbf{23} 2221--2297

\bibitem{BMMi-10}
M. Bousquet-M\'elou and M. Mishna (2010).
Walks with small steps in the quarter plane.
\textit{Algorithmic probability and combinatorics}, 1--39, Contemp. Math., 520, \textit{Amer. Math. Soc., Providence, RI}

\bibitem{BoLo-96}
O. J. Boxma and V. I. Lotov (1996).
On a class of one-dimensional random walks.
\textit{Markov Process. Related Fields} {\bf2} 349--362

\bibitem{Ca-71}
P. Cartier (1971). 
Fonctions harmoniques sur un arbre.
\textit{Symposia Mathematica, Vol. IX} (Convegno di Calcolo delle Probabilit\`a, INDAM, Rome, 1971) 203--270

\bibitem{CoBo-83}
J. W. Cohen and O. J. Boxma (1983).
\textit{Boundary value problems in queueing system analysis.}
North-Holland Mathematics Studies, 79. North-Holland Publishing Co., Amsterdam

\bibitem{Co-92}
J. W. Cohen (1992).
\textit{Analysis of random walks.}
Studies in Probability, Optimization and Statistics, 2. IOS Press, Amsterdam


\bibitem{DeKoWa-19}
D. Denisov, D. Korshunov and V. Wachtel (2019).
Markov chains on $\mathbb Z^+$: analysis of stationary measure via harmonic functions approach.
\textit{Queueing Syst.} {\bf91} 26--295

\bibitem{DeWa-15}
D. Denisov and V. Wachtel (2015).
Random walks in cones.
\textit{Ann. Probab.} \textbf{43} 992--1044

\bibitem{Do-59}
J. L. Doob (1959).
Discrete potential theory and boundaries.
\textit{J. Math. Mech.} \textbf{8} 433--458

\bibitem{DuRaTaWa-22}
J. Duraj, K. Raschel, P. Tarrago and V. Wachtel (2022).
Martin boundary of random walks in convex cones.
\textit{Ann. H. Lebesgue} \textbf{5} 559--609 

\bibitem{FaIa-79}
G. Fayolle and R. Iasnogorodski (1979).
Two coupled processors: the reduction to a Riemann-Hilbert problem.
\textit{Z. Wahrsch. Verw. Gebiete} \textbf{47} 325--351

\bibitem{FaIaMa-17}
G. Fayolle, R. Iasnogorodski and V. Malyshev (2017).
\textit{Random walks in the quarter plane. Algebraic methods, boundary value problems, applications to queueing systems and analytic combinatorics.}
Second edition. Probability Theory and Stochastic Modelling, 40. Springer, Cham

\bibitem{FaMaMe-95}
G. Fayolle, V. Malyshev and M. Menshikov (1995).
\textit{Topics in the constructive theory of countable Markov chains.} 
Cambridge University Press, Cambridge

\bibitem{FaRa-15}
G. Fayolle and K. Raschel (2015).
About a possible analytic approach for walks in the quarter plane with arbitrary big jumps.
\textit{C. R. Math. Acad. Sci. Paris} {\bf353} 89--94


\bibitem{He-63}
P.-L. Hennequin (1963).
Processus de Markoff en cascade.
\textit{Ann. Inst. H. Poincar\'e} {\bf18} 109--195

\bibitem{IR-08}
I. Ignatiouk-Robert (2008).
Martin boundary of a killed random walk on a half-space.
\textit{J. Theoret. Probab.} \textbf{21} 35--68

\bibitem{IR-10a}
I. Ignatiouk-Robert (2010).
$t$-Martin boundary of reflected random walks on a half-space.
\textit{Electron. Commun. Probab.} {\bf15} 149--161

\bibitem{IR-10b}
I. Ignatiouk-Robert (2010).
Martin boundary of a reflected random walk on a half-space.
\textit{Probab. Theory Related Fields} {\bf148} 197--245

\bibitem{IR-20}
I. Ignatiouk-Robert (2020).
Martin boundary of a killed non-centered random walk in a general cone.
\textit{Preprint arXiv:2006.15870}

\bibitem{IRLo-10}
I. Ignatiouk-Robert and C. Loree (2010).
Martin boundary of a killed random walk on a quadrant.
\textit{Ann. Probab.} {\bf38} 1106--1142



\bibitem{KuMa-98}
I. Kurkova and V. Malyshev (1998).
Martin boundary and elliptic curves.
\textit{Markov Process. Related Fields} {\bf4} 203--272

\bibitem{KuRa-11}
I. Kurkova and K. Raschel (2011).
Random walks in $\mathbb Z_+^2$ with non-zero drift absorbed at the axes. 
\textit{Bull. Soc. Math. France} \textbf{139} 341--387

\bibitem{KuSu-03}
I. Kurkova and Y. M. Suhov (2003).
Malyshev's theory and JS-queues. Asymptotics of stationary probabilities.
\textit{Ann. Appl. Probab.} \textbf{13} 1313--1354

\bibitem{Ma-70}
V. A. Malyshev (1970).
\textit{Random walks. The Wiener-Hopf equation in a quadrant of the plane. Galois automorphisms} (Russian).
Izdat. Moskov. Univ., Moscow

\bibitem{Ma-72}
V. A. Malyshev (1972).
An analytic method in the theory of two-dimensional positive random walks (Russian).
\textit{Sibirsk. Mat. \v{Z}.} \textbf{13} 1314--1329, 1421

\bibitem{Ma-73}
V. A. Malyshev (1973).
Asymptotic behavior of the stationary probabilities for two-dimensional positive random walks (Russian).
\textit{Sibirsk. Mat. \v{Z}.} \textbf{14} 156--169, 238

\bibitem{Ma-41}
R. S. Martin (1941).
Minimal positive harmonic functions.
\textit{Trans. Amer. Math. Soc.} \textbf{49} 137--172

\bibitem{MeTw-93}
S. P. Meyn and R. L. Tweedie (1993).
\textit{Markov chains and stochastic stability}.
Communications and Control Engineering Series.
\textit{Springer-Verlag London, Ltd., London}

\bibitem{NeSp-66}
P. Ney and F. Spitzer (1966).
The Martin boundary for random walk.
\textit{Trans. Amer. Math. Soc.} \textbf{121} 116--132

\bibitem{PiWo-87}
M. Picardello and W. Woess (1987).
Martin boundaries of random walks: ends of trees and groups.
\textit{Trans. Amer. Math. Soc.} {\bf302} 185--205

\bibitem{PiWo-90}
M. Picardello and W. Woess (1990).
Examples of stable Martin boundaries of Markov chains.
In \textit{Potential theory (Nagoya, 1990)} 261--270, de Gruyter, Berlin, 1992

\bibitem{PiWo-92}
M. Picardello and W. Woess (1992).
Martin boundaries of Cartesian products of Markov chains.
\textit{Nagoya Math. J.} {\bf128} 153--169

\bibitem{Prabhu-Tang-Zhu} 
N. U. Prabhu, L.C. Tang and Y. Zhu (1991).
Some new results for the Markov random walk. 
\textit{J. Math. Phys. Sci.} {\bf25} 635--663

\bibitem{Se-81}
E. Seneta (1981).
\textit{Non-negative matrices and Markov chains.}
Second edition. Springer Series in Statistics. Springer-Verlag, New York

\bibitem{Wo-00}
W. Woess (2000).
\textit{Random walks on infinite graphs and groups.}
Cambridge Tracts in Mathematics, 138. 
\textit{Cambridge University Press}, Cambridge

\end{thebibliography}
\end{document}